\documentclass[11pt,a4paper]{amsart}
\usepackage[utf8]{inputenc}
\usepackage[T1]{fontenc}
\usepackage{amsmath,amssymb,amsthm,mathtools}
\usepackage[margin=2.6cm]{geometry}
\usepackage{enumitem}
\usepackage{graphicx}
\usepackage{caption}
\usepackage[colorlinks=true,linkcolor=blue,citecolor=blue,urlcolor=blue]{hyperref}
\usepackage{xcolor}
\usepackage{tikz}
\definecolor{darkgreen}{rgb}{0,0.5,0}

\newtheorem{theorem}{Theorem}[section]
\newtheorem{proposition}[theorem]{Proposition}
\newtheorem{lemma}[theorem]{Lemma}
\newtheorem{corollary}[theorem]{Corollary}
\theoremstyle{definition}
\newtheorem{definition}[theorem]{Definition}
\newtheorem{example}[theorem]{Example}
\theoremstyle{remark}
\newtheorem{remark}[theorem]{Remark}
\numberwithin{equation}{section}

\newcommand{\R}{\mathbb{R}}
\newcommand{\N}{\mathbb{N}}
\newcommand{\K}{\mathcal{K}}
\newcommand{\G}{\mathcal{G}}

\newcommand{\eps}{\varepsilon}
\newcommand{\dd}{\,\mathrm{d}}
\newcommand{\Om}{\Omega}
\DeclareMathOperator{\supp}{supp}

\begin{document}

\title[The H\'enon equation beyond the critical Sobolev exponent]{The H\'enon equation beyond the critical Sobolev exponent}

\author[Gladiali]{Francesca Gladiali}
\address{Dipartimento di Scienze CFMN,  Universit\`a degli Studi di Sassari, Via Vienna 2 - 07100 Sassari, Italy, e-mail: {\sf fgladiali@uniss.it}.}
\author[Grossi]{Massimo Grossi }
\address{Dipartimento di Scienze di Base e Applicate per l'Ingegneria, Universit\`a di Roma ``La Sapienza'', Via Scarpa 12 - 00161 Roma, Italy, e-mail: {\sf massimo.grossi@uniroma1.it}.}

\subjclass[2020]{35J61, 35J20, 35B33, 35B06}
\keywords{H\'enon equation, supercritical exponent, weighted Sobolev inequalities, polarization, convex constraints}

\begin{abstract}
We consider the Dirichlet problem $-\Delta u=|x|^\alpha u^p$, $u>0$ in $\Omega$, $u=0$ on $\partial\Omega$, where $\Omega\subset\R^N$, $N\ge 3$, is a bounded domain containing the origin and $\alpha>0$. We prove that a solution exists for every exponent
$1<p<p_\alpha=\frac{N+2+2\alpha}{N-2}$, in particular in the whole Sobolev-supercritical range $\frac{N+2}{N-2}\le p<p_\alpha$, provided $\Omega$ is invariant under $O(N-1)\times O(1)$ and satisfies an angular monotonicity condition. This class contains the balls, the oblate ellipsoids of revolution, lens-shaped domains, and non-convex, non-smooth and non star-shaped domains. The solution is obtained by minimizing the Rayleigh quotient on a suitable closed convex cone.
\end{abstract}

\maketitle

%=====================================================================
\section{Introduction}\label{sec:intro}
%=====================================================================

Let $\Omega\subset\R^N$, $N\ge3$, be a bounded domain with $0\in\Omega$, and let $\alpha>0$. We study the H\'enon problem
\begin{equation}\label{eq:P}
\begin{cases}
-\Delta u=|x|^\alpha u^p & \text{in }\Omega,\\
u>0 & \text{in }\Omega,\\
u=0 & \text{on }\partial\Omega,
\end{cases}
\end{equation}
introduced in \cite{He} in the study of rotating stellar systems. Two exponents play a role:
\[
2^*-1=\frac{N+2}{N-2}\qquad\text{and}\qquad p_\alpha:=\frac{N+2+2\alpha}{N-2}>\frac{N+2}{N-2}.
\]
We also write
\[
2^*=\frac{2N}{N-2},\qquad q_\alpha:=p_\alpha+1=\frac{2(N+\alpha)}{N-2}.
\]

If $1<p<2^*-1$, a solution of \eqref{eq:P} exists in every bounded domain $\Omega$ by the mountain pass theorem. When $\Omega=B$ is a ball centred at the origin, Ni \cite{Ni} observed that the space $H^1_{0,\mathrm{rad}}(B)$ of radial functions embeds compactly into $L^{p+1}(B,|x|^\alpha\dd x)$ exactly when $p+1<q_\alpha$. Hence \eqref{eq:P} has a radial solution for every $1<p<p_\alpha$, and in particular for Sobolev-supercritical exponents. On the other hand, the Pohozaev identity \cite{Po} shows that \eqref{eq:P} has no solution if $p\ge p_\alpha$ and $\Omega$ is smooth and star-shaped with respect to the origin. So $p_\alpha$ is the natural critical exponent of the problem.

For non-radial domains and $2^*-1\le p<p_\alpha$ very little is known. The energy
$\frac12\int|\nabla u|^2-\frac1{p+1}\int|x|^\alpha|u|^{p+1}$ is not even defined on $H^1_0(\Omega)$, and without symmetry nothing prevents concentration at points $x_0\ne0$, where the weight $|x|^\alpha$ is harmless and the problem behaves like $-\Delta u=|x_0|^\alpha u^p$. 
Existence results are available for $p=p_\alpha-\eps$ with $\eps>0$ small, in every smooth bounded domain containing the origin, via a perturbative construction of solutions concentrating at the origin: this was done in \cite{GG} for $0<\alpha\le1$, and extended in \cite{GGN} to every $\alpha>0$ with $\alpha\notin2\N$. 
In the unit ball, non-radial solutions were found for $N\ge4$ and large $\alpha$ in \cite{BS}. For even $N\ge4$ and every $\alpha>0$, \cite{CM} obtained positive solutions in a class of functions with a monotonicity property in an angular variable, and proved that they are non-radial for $p$ in a range close to $p_\alpha$; see also \cite{SSW} for the subcritical case.  
In the ball, positive non-radial solutions bifurcating from the radial one were obtained in \cite{AG1} for $0<\alpha\le1$, and in \cite{A} for every $\alpha>0$, using the Morse index of radial solutions for $p$ close to $1$ and for $p$ close to $p_\alpha$, the latter computed in \cite{AG}.

The purpose of this paper is to prove existence of solutions of \eqref{eq:P} in the full range $1<p<p_\alpha$, for every $\alpha>0$ and $N\ge3$, in a class of non-radial domains defined by a symmetry and a monotonicity condition. We write $x=(x',x_N)\in\R^{N-1}\times\R$ and let
\begin{equation}\label{G}
\G:=O(N-1)\times O(1)
\end{equation}
act on $\R^N$ by $(g,t)\cdot(x',x_N)=(gx',tx_N)$. 

\begin{definition}\label{def:adm}
A bounded domain $\Omega\subset\R^N$ is \emph{admissible} if $0\in\Omega$ and
\begin{enumerate}[label=\textup{(\alph*)}]
\item $\Omega$ is $\G$-invariant, i.e.\ axially symmetric around the $x_N$-axis and symmetric with respect to the hyperplane $\{x_N=0\}$;
\item for every $x\in\Omega$ and every $y\in\R^N$ with $|y|=|x|$ and $|y_N|\le|x_N|$, we have $y\in\Omega$.
\end{enumerate}
\end{definition}

Condition (b) says that,  moving a point of $\Omega$ along a meridian of $\partial B_{|x|}$ towards the equator never leaves $\Omega$. For instance, a domain whose boundary is the radial graph $r=g(\theta)$ is admissible if and only if $g$ is non-increasing in the latitude $\theta\in[0,\pi/2]$ (Lemma~\ref{lem:polar}). Balls centred at $0$, oblate ellipsoids of revolution $\{|x'|^2/a^2+x_N^2/b^2<1\}$ with $b\le a$, lenses $B_R(ce_N)\cap B_R(-ce_N)$, and the non-convex, non star-shaped examples of Section~\ref{sec:examples} are admissible.  Prolate ellipsoids and cylinders are not. Some examples are shown in  Figure~\ref{fig:admissible} and Figure~\ref{fig:3d}. 
\vskip0.5cm
To state our results, we need to introduce an appropriate cone $\K\subset H^1_0(\Omega)$, which will play a fundamental role throughout the paper.
For $\varphi\in(0,\pi/2)$ we set
\[
\nu_\varphi:=-\sin\varphi\, e_1+\cos\varphi\, e_N,\qquad \tau_\varphi:=\cos\varphi\, e_1+\sin\varphi\, e_N .
\]
$\{\nu_\varphi,\tau_\varphi\}$ is an orthonormal basis of the plane $\Pi:=\mathrm{span}\{e_1,e_N\}$. Let $\sigma_\varphi$ be the reflection through the hyperplane $\nu_\varphi^\perp$:
\[
\sigma_\varphi x:=x-2(x\cdot\nu_\varphi)\nu_\varphi,
\]
and \[
W_\varphi:=\{x:\ x\cdot\nu_\varphi>0,\ x\cdot\tau_\varphi>0\}.
\]
\begin{definition}\label{def:K}
Let $\Omega$ be admissible. We denote by $\K=\K(\Omega)$ the set of functions $u\in H^1_{0,\G}(\Omega)$ such that $u\ge0$ a.e.\ and, for every $\varphi\in(0,\pi/2)$,
\begin{equation}\label{eq:reflK}
u(\sigma_\varphi x)\ge u(x)\qquad\text{for a.e. }x\in W_\varphi .
\end{equation}
\end{definition} 
The same cone $\K$ was used in annuli in \cite{BCNW} where it was defined by means of a condition on the angular derivative.  The novelty here is its use in  domains containing the origin.

Our main result is the following.

\begin{theorem}\label{thm:main}
Let $N\ge3$, $\alpha>0$, $1<p<p_\alpha$, and let $\Omega\subset\R^N$ be an admissible domain with Lipschitz continuous boundary. Then problem \eqref{eq:P} has a solution $u\in H^1_0(\Omega)\cap C^2(\Omega)\cap C(\overline\Omega)$ with the following properties.
\begin{enumerate}[label=\textup{(\roman*)}]
\item $u$ is $\G$-invariant: $u(x)=\tilde u(|x'|,|x_N|)$.
\item $u$ is monotone in the latitude: for every unit vector $e\in\R^{N-1}\times\{0\}$, every $r>0$ and every $0\le\theta'<\theta\le\pi/2$ such that $x_\theta:=r(\cos\theta\, e+\sin\theta\, e_N)\in\Omega$, we have $x_{\theta'}\in\Omega$ and
\[
u(x_{\theta'})\ge u(x_\theta).
\]
In particular, on each sphere $\partial B_r$ the maximum of $u$ on $\partial B_r\cap\Omega$ is attained on the equator $\{x_N=0\}$.
\item $u$ minimizes the Rayleigh quotient
\[
\frac{\int_\Omega|\nabla v|^2\dd x}{\left(\int_\Omega|x|^\alpha|v|^{p+1}\dd x\right)^{2/(p+1)}}
\]
on the closed convex cone $\K$.
\end{enumerate}
\end{theorem}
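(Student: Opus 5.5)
We minimize the Rayleigh quotient $R(v)$ of (iii) over $\K\setminus\{0\}$ and then show that the minimizer solves \eqref{eq:P}. The plan has four steps: a compactness estimate on $\K$, existence of a minimizer, a variational inequality turned into the Euler--Lagrange equation via polarization, and regularity and positivity.

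\textbf{Step 1 (compactness).} This is the heart of the matter. We need to show that $\K$ embeds compactly into $L^{p+1}(\Omega,|x|^\alpha\dd x)$ for all $p+1<q_\alpha$. Take $u\in\K$ and fix $r$. Condition \eqref{eq:reflK} for all $\varphi$, together with $\G$-invariance, says that on $\partial B_r\cap\Omega$ the function $u$ is non-increasing in the latitude $|\theta|$. Admissibility (b) guarantees that the domain of this monotonicity is a spherical band around the equator. So the values of $u$ near a point $x$ at latitude $\theta$ are dominated by the values on the whole band $\{|\theta'|\le|\theta|\}$. This band has measure comparable to $r^{N-1}$ times a fixed fraction whenever $x$ is near the equator, and the fraction only grows as $x$ moves toward the poles.

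From this we derive a pointwise decay bound of Strauss/Ni type: $|u(x)|\le C|x|^{-(N-2)/2}\|\nabla u\|_2$ away from the poles, plus an integrated version near the axis. Concretely, we average over a neighbourhood $A_x=\{y:\ |y|\in(|x|/2,|x|),\ |y_N|/|y|\le |x_N|/|x|\}$ and use $u(x)\lesssim$ a mean over $A_x$, then Sobolev on dyadic annuli. This is the same mechanism as for radial functions, since the superlevel sets are "large" on each sphere.

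This yields uniform control of $\int_{B_\delta}|x|^\alpha|u|^{p+1}$ by $o(1)\|\nabla u\|_2^{p+1}$ as $\delta\to0$ when $p+1<q_\alpha$. Away from the origin we need compactness in the supercritical range as well. Here we use that $\G$-invariant functions with this monotonicity are controlled by functions of the two variables $(|x'|,x_N)$ living in a region where $|x'|$ is bounded below by a multiple of $|x|$, for a definite portion of each sphere. Splitting $\Omega\setminus B_\delta$ into the region near the axis and its complement, we use the $O(N-1)$-invariance (effective dimension $2$ away from the axis, giving compactness in every $L^q$) and the monotonicity to dominate the near-axis part by the equatorial part. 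This step is the main obstacle, and the precise covering argument is where I expect the work to lie.

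\textbf{Step 2 (existence of a minimizer).} $\K$ is a closed convex cone in $H^1_0$ and is nontrivial, containing e.g.\ suitable radial bumps. Take a minimizing sequence normalized in $L^{p+1}(|x|^\alpha)$. It is bounded in $H^1_0$, weak limits stay in $\K$, and Step 1 gives strong convergence in the weighted norm, so the infimum is attained at some $u\neq 0$. Since $R$ is positive and the norm constraint prevents collapse, $u\ne0$.

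\textbf{Step 3 (Euler--Lagrange equation).} Minimality on a convex cone only gives the variational inequality
\[
\int\nabla u\cdot\nabla(v-u)\ge \lambda\int|x|^\alpha u^p(v-u)\quad\text{for all } v\in\K .
\]
To upgrade it to an equation, we follow the Bartsch--Clapp--Weth style argument of \cite{BCNW}. Let $w$ be the solution of $-\Delta w=\lambda|x|^\alpha u^p$ in $H^1_{0}(\Omega)$; it is unique and $\G$-invariant. We show $w\in\K$ by a polarization/maximum-principle argument. The function $z=w\circ\sigma_\varphi-w$ on $W_\varphi\cap\Omega$ vanishes on the hyperplane $\nu_\varphi^\perp$. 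It is $\ge0$ on $\partial\Omega\cap W_\varphi$, because $\sigma_\varphi$ maps $W_\varphi\cap\Omega$ into $\Omega$ by admissibility (b). On the plane $\tau_\varphi^\perp$ we use the symmetry of $w$. Moreover,
\[
-\Delta z=\lambda(|x|^\alpha)(u^p\circ\sigma_\varphi-u^p)\ge0,
\]
using $|\sigma_\varphi x|=|x|$. Hence $z\ge0$. Testing the variational inequality with $v=w$, together with the choices $v=2u$ and $v=0$, then gives $\|\nabla(w-u)\|^2\le0$, so $u=w$. Rescaling removes $\lambda$.

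\textbf{Step 4 (regularity and positivity).} Step 1 gives $|x|^\alpha u^p\in L^s$ for some $s>N/2$ locally near $0$ via the decay bound; away from $0$ the effective low dimensionality does the same. A bootstrap then gives $C^2(\Omega)$ regularity, and Lipschitz boundary gives $C(\overline\Omega)$. The strong maximum principle gives $u>0$. Property (ii) follows from \eqref{eq:reflK} with $\sigma_\varphi$ mapping $x_\theta$ to $x_{\theta'}$ for $\varphi=(\theta+\theta')/2$, now holding pointwise by continuity.
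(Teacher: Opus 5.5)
\textbf{Step 1 rests on a false lemma.} The pointwise bound $|u(x)|\le C|x|^{-(N-2)/2}\|\nabla u\|_2$ ``away from the poles'' does not hold in $\K$, and the geometry behind it is reversed. Membership in $\K$ gives $u(y)\ge u(x)$ only for $y\in\partial B_{|x|}$ with $\theta(y)\le\theta(x)$. It gives nothing in the radial direction, so ``$u(x)\lesssim$ mean over $A_x$'' is also unjustified. Moreover, this band shrinks to the equatorial $(N-2)$-sphere as $\theta(x)\to0$; it is a fixed fraction of the sphere only when $\theta(x)\ge\theta_0>0$. So comparing with spherical means and Ni's radial bound gives a pointwise estimate only in the \emph{polar} cones.

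Near the equator no pointwise bound exists. Suppose $\{|x'|=r_0,\ x_N=0\}\subset\Omega$ and set $d(x)^2:=(|x'|-r_0)^2+x_N^2=|x|^2-2r_0|x'|+r_0^2$. Let $\chi\ge0$ be a non-increasing cutoff supported in $\{d<d_0\}$, with $d_0<1/e$ small. Then $u=\chi(d)\log\log(1/d)$ is $\G$-invariant and lies in $H^1_0(\Omega)$, since it is a two-dimensional $\log\log$ singularity. It satisfies \eqref{eq:reflK}, because $d(\sigma_\varphi x)\le d(x)$ on $W_\varphi$ by Lemma~\ref{lem:geom}(iii). Yet it is unbounded on the equator. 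Hence the uniform smallness of $\int_{B_\delta}|x|^\alpha u^{p+1}$ on bounded subsets of $\K$ is not proved.

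What is missing is an \emph{integral}, scale-invariant estimate in the equatorial cone $\{|x_N|<|x'|\}$. There $|x|\le\sqrt2|x'|$ and $u$ is a function of $(|x'|,x_N)$ with weight $|x'|^{N-2}$. The paper proves $\int\phi^{2^*+2\kappa}|x'|^{\kappa(N-2)}\le C_\kappa\|\nabla\phi\|_2^{2^*+2\kappa}$ for $O(N-1)$-invariant $\phi$ by iterating a weighted two-dimensional Gagliardo--Nirenberg inequality. It then moves the polar cones into the equatorial one with the single reflection $\sigma_{\pi/4}$ (Lemma~\ref{lem:transfer}). A two-dimensional Sobolev inequality on dyadic annuli would also work: it gives $\int_{\{\rho<|x|<2\rho\}}|x|^\alpha u^q\lesssim\rho^{\alpha+N-q(N-2)/2}\|\nabla u\|_2^q$, which is summable exactly when $q<q_\alpha$. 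The same estimate is what you tacitly need in Step 3 to know that $|x|^\alpha u^p\in H^{-1}(\Omega)$, so that $w$ exists and its equation can be tested with $w-u$.

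\textbf{Step 4 does not close in the supercritical range.} Even granting $u\lesssim|x|^{-(N-2)/2}$ near $0$, you only get $|x|^\alpha u^p\lesssim|x|^{\alpha-p(N-2)/2}$. This lies in $L^s(B_1)$ for some $s>N/2$ only if $p<\frac{4+2\alpha}{N-2}=p_\alpha-1$. So for $p$ close to $p_\alpha$ the bootstrap on the right-hand side never starts. You have to regard the equation as linear, $-\Delta U=VU$ with $V=|x|^\alpha U^{p-1}$, and run a Brezis--Kato/Moser iteration.

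The paper does this inside $\K$. The truncated powers $h_1(U)=U\min\{U,L\}^{\beta-1}$ stay in $\K$, which is stable under non-decreasing Lipschitz maps vanishing at $0$. So the weighted embedding applies to them, and $\int_{\{U>M\}}|x|^\alpha U^{p-1}h_1(U)^2$ can be absorbed into $\|\nabla h_1(U)\|_2^2$. This gives $U\in L^q$ for every $q<\infty$, and only then $L^\infty$, $C^{2,\gamma}_{\rm loc}$ and continuity up to the Lipschitz boundary.

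By contrast, your Steps 2 and 3 are sound, and Step 3 is a genuine alternative to the paper's argument. You compare $w\circ\sigma_\varphi$ with $w$ by the weak maximum principle on $W_\varphi\cap\Omega$; note that $w\circ\sigma_\varphi-w$ is odd across both $\nu_\varphi^\perp$ and $\tau_\varphi^\perp$. The paper instead shows $I(P_\varphi v)\le I(v)$ for the polarization and invokes uniqueness of the minimizer of the strictly convex energy. That route avoids discussing boundary values on the non-smooth set $W_\varphi\cap\Omega$.
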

In the previous theorem no asymptotic condition is imposed on $\alpha$, on $p_\alpha-p$ or on the shape of $\Omega$.
\vskip0.2cm
\begin{remark}
The Lipschitz regularity of $\partial\Omega$ required in the theorem above is not needed for the existence part. If $\Omega$ is not Lipschitz one can repeat the proof obtaining a solution $u\in H^1_0(\Omega)\cap L^\infty(\Omega)\cap C^2(\Omega)$.  Regularity of $\partial\Omega$ is used only to obtain $u\in C(\overline\Omega)$ with $u=0$ on $\partial\Omega$ (although, to ensure that $u\in C(\overline{\Om})$, it is sufficient to assume that $\partial\Om$ satisfies an exterior cone condition, see \cite{GT}).
Moreover, since \eqref{eq:P} is supercritical, global regularity of weak solutions is not automatic. We prove it by a Brezis--Kato--Moser iteration \cite{BK} performed inside $\K$, using that $\K$ is stable under composition with non-decreasing Lipschitz functions (Proposition~\ref{prop:reg}).
\end{remark}

\begin{remark}
The passage from a minimizer on $\K$ to a solution of \eqref{eq:P} follows  some ideas used in \cite{Mo1,Mo2},
(see also \cite{CM}). 
Related constraints of monotonicity type were introduced in \cite{ST} for supercritical Neumann problems.
\end{remark}

\begin{remark}
\vskip0.1cm
\noindent Admissible domains of the form $\{|x|<g(\theta)\}$ are star-shaped with respect to $0$. When they are smooth (e.g.\ ellipsoids), the Pohozaev identity shows that the range $p<p_\alpha$ is optimal. Admissible domains need not be star-shaped (Example~\ref{ex:nonstar}); for them the existence question for $p\ge p_\alpha$ is open.
\end{remark}

%The method does not extend to nodal solutions in the supercritical range. In the natural convex cone of $\G$-invariant functions which are non-increasing in the latitude, the negative part is non-decreasing in the latitude, so it may concentrate at the poles, where $O(N-1)$-invariance does not prevent blow-up. Hence compactness, and thus existence, holds only for $1<p<\frac{N+2}{N-2}$.

In the ball, Theorem~\ref{thm:main} gives new non-radial solutions in the supercritical range, at least for $\alpha>2$:

\begin{corollary}\label{cor:ball}
Let $\Omega=B$ be the unit ball, $N\ge3$ and $\alpha>2$. There exists $\bar p\in(1,p_\alpha)$ such that for every $p\in[\bar p,p_\alpha)$ the solution given by Theorem~\ref{thm:main} is non-radial.
\end{corollary}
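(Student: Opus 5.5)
The plan is to argue by contradiction through an energy comparison. Suppose that, for a sequence $p_n\to p_\alpha$, the minimizer $u_n$ of Theorem~\ref{thm:main}(iii) on $\K$ were radial. Radial functions belong to $\K$, since they are $\G$-invariant and satisfy \eqref{eq:reflK} with equality. Hence $u_n$ would also minimize the Rayleigh quotient on $H^1_{0,\mathrm{rad}}(B)$, so $u_n$ would be the radial ground state $U_p$. By the known uniqueness of positive radial solutions of the H\'enon equation in the ball (via the change of variables $r\mapsto r^{(2+\alpha)/2}$, which transforms the problem into a Lane--Emden type equation in a fractional dimension), $U_p$ is unique. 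It therefore suffices to exhibit, for $p$ close to $p_\alpha$, some $v\in\K$ whose Rayleigh quotient is strictly smaller than that of $U_p$.

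\textbf{Step 1: asymptotics of the radial level.} Use the transformation $t=r^{(2+\alpha)/2}$, which sends the radial H\'enon problem to $-(t^{M-1}w')'=c\,t^{M-1}w^p$ with $M=2(N+\alpha)/(2+\alpha)$. This shows that $S_{\mathrm{rad}}(p):=\inf_{H^1_{0,\mathrm{rad}}}R_p$ converges, as $p\to p_\alpha$, to the explicit best constant $S_{\alpha,\mathrm{rad}}$ of the weighted radial Sobolev inequality. That constant is attained by $(1+|x|^{2+\alpha})^{-(N-2)/(2+\alpha)}$ in $\R^N$.

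\textbf{Step 2: a cheaper test function in $\K$.} This is the main obstacle. The idea is that, for $\alpha>2$, the radial ground state concentrating at $0$ is not the optimal configuration on $\K$. The second variation of the limiting radial bubble in the direction of the degree-two zonal harmonic $Y_2(x)=N x_N^2-|x|^2$ is computed from the linearized operator. Its eigenvalues along spherical harmonics of degree $k$ cross those of the radial mode exactly when $k(k+N-2)$ exceeds a threshold depending on $\alpha$; the degeneracy at $\alpha=2(k-1)$ was found in \cite{AG,GGN}. For $\alpha>2$ the $k=2$ mode yields a negative direction for the constrained quotient. Crucially, the perturbation $U-\eps\,h(|x|)Y_2$, with $h\ge0$ and $\eps>0$ small, decreases in latitude. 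It therefore stays in $\K$, since \eqref{eq:reflK} amounts to monotonicity in $|x_N|$ on each sphere (the characterization behind Theorem~\ref{thm:main}(ii)). I would try this second-variation computation first, using the Morse index information on $U_p$ for $p$ near $p_\alpha$ from \cite{AG}. That index shows that the $k=2$ direction is negative for the constrained functional once $\alpha>2$ and $p$ is close to $p_\alpha$.

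\textbf{Step 3: conclusion.} Combining the two steps, for $p$ near $p_\alpha$ the function $U_p-\eps h Y_2\in\K$ has
\[
R_p(U_p-\eps hY_2)=R_p(U_p)-c\eps^2+o(\eps^2)<R_p(U_p).
\]
This holds because $U_p$ is a critical point of $R_p$, so the first variation vanishes. Hence $\inf_\K R_p<S_{\mathrm{rad}}(p)$, and the minimizer is non-radial. This argument works directly at fixed $p$, without any sequence, and gives a $\bar p$ via continuity of the relevant eigenvalue of the linearized operator in $p$, as established in \cite{AG}. The delicate point is making the sign of the second variation uniform for $p\in[\bar p,p_\alpha)$, which I would handle through the convergence of the rescaled $U_p$ to the bubble.
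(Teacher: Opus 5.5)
Your proposal follows essentially the same route as the paper. You assume the minimizer on $\K$ is radial, so by uniqueness it is $u_p$, and then lower the Rayleigh quotient inside $\K$ with the latitude-decreasing perturbation $u_p-\eps\,h(|x|)Y_2$, whose second variation is negative because $\hat\Lambda_1^{\mathrm{rad}}+\lambda_2<0$ for $\alpha>2$ and $p\in[\bar p,p_\alpha)$ by \cite{AG}. Your Step~1 on the radial level is not needed, and no uniformity in $p$ has to be proved, since $\bar p$ comes directly from \cite{AG}.

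Two details the paper makes explicit should be added. First, take $h\ge0$ to be a radial approximation of the first singular eigenfunction $\hat\phi_1$ with compact support in $B\setminus\{0\}$, so that $u_p-\eps hY_2\ge0$ for small $\eps$. Second, use $\int_{S^{N-1}}Y_2=0$ to kill the first-order terms, so that the second variation of the quotient is exactly a positive multiple of $Q(hY_2)<0$.
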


The proof of this corollary (Section~\ref{sec:proof}) combines the computation of the Morse index of the positive radial solution $u_p$ for $p$ close to $p_\alpha$ in \cite[Theorem~1.1]{AG} with the observation that, when $\alpha>2$, $u_p$ has a negative direction of the form $w(|x|)\big(N\frac{x_N^2}{|x|^2}-1\big)$ (for some radial function $w$), which is compatible with the constraint $\K$.  The solutions in Corollary \ref{cor:ball} are different from the non-radial solutions obtained in \cite{BS}, which are invariant under $O(k)\times O(N-k)$ with $2\le k\le N-2$, but are also different from the $O(N-1)$-invariant solutions bifurcating from the radial one found in \cite{AG1} and the ones in \cite{A}.

For $1<p<\frac{N+2}{N-2}$ and $\alpha$ large, the solution of Theorem~\ref{thm:main} in the ball is not a least energy solution. Indeed, least energy solutions are foliated Schwarz symmetric \cite{SW} and non-radial \cite{SSW}, while a foliated Schwarz symmetric function in $\K$ is radial, being invariant under $\G$ and under the rotations around some axis. 
It would be interesting to understand what happens to the solutions of Corollary \ref{cor:ball} as $p\to p_\alpha$.

We now give an outline of the proof of Theorem \ref{thm:main}. The first step is to establish an inequality in the cone $\K$ analogous to (10) in Ni's paper \cite{Ni}. However, since we are not working in a radial setting, several additional difficulties arise. By exploiting the invariance of $\K$ under the orthogonal group $O(N-1)$ and the monotonicity property \eqref{eq:reflK} in the definition of $\K$, we can prove the following result:
\begin{theorem}\label{cor:emb}
Let $\Omega$ be admissible, $\alpha\ge0$, and $1\le q\le q_\alpha$. There is $C=C(N,\alpha,q,\Omega)$ such that
\[
\int_\Omega|x|^\alpha u^q\dd x\le C\|\nabla u\|_2^q\qquad\text{for every }u\in\K .
\]
\end{theorem}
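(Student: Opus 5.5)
By Hölder on the bounded set $\Omega$, with the finite weight $|x|^\alpha\dd x$, it suffices to treat $q=q_\alpha$. The plan is a pointwise decay estimate of Ni type, $u(x)\le C|x|^{-a}\|\nabla u\|_2$, whose exponent is better than the radial one in some directions. This is then combined with the Sobolev inequality away from the origin.

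\textbf{Step 1 (pointwise bound from monotonicity).} Let $u\in\K$, which we may take smooth by density, the cone being closed and convex. Fix $x\in\Omega$ with $|x|=r$, and let $\theta$ be its latitude. By \eqref{eq:reflK}, and admissibility (b) to stay inside $\Omega$, $u$ is non-increasing in $|\theta|$ on each sphere $\partial B_r\cap\Omega$. Together with $O(N-1)$-invariance, $u(x)$ is therefore bounded by the average of $u$ over the cap
\[
S_x:=\{y\in\partial B_r:\ |y_N|\le|x_N|\}.
\]
Its measure is about $r^{N-1}\cos^{N-2}\theta$ near the poles; in all cases it is at least $c\,r^{N-1}\cos^{N-2}\theta$. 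Integrating along rays from $S_x$ to $\partial\Omega$ (with $u=0$ there), and using Cauchy--Schwarz in the radial variable $\rho\ge r$, I expect
\[
u(x)^2\le \frac{C}{r^{N-2}\cos^{N-2}\theta}\int_{\Omega\setminus B_r}|\nabla u|^2\dd y .
\]
For $\theta$ near the equator this is Ni's inequality $u\le Cr^{-(N-2)/2}\|\nabla u\|_2$, and it degenerates near the poles. Admissibility of $\Omega$ (the portion of $\Omega$ beyond radius $r$ in directions near the equator contains the rays from $S_x$) is used to justify the ray integration. A cleaner route is the one-dimensional Ni argument applied to spherical averages over the growing sets $S_x^\rho=\rho S_x/r$.

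\textbf{Step 2 (splitting).} Write
\[
\int|x|^\alpha u^{q_\alpha}=\int|x|^\alpha u^{2^*}\,u^{q_\alpha-2^*},
\qquad q_\alpha-2^*=\frac{2\alpha}{N-2}.
\]
Near the equator, Step 1 gives $u^{q_\alpha-2^*}\le C|x|^{-\alpha}\|\nabla u\|_2^{q_\alpha-2^*}$. The weight cancels, and Sobolev gives $\int u^{2^*}\le C\|\nabla u\|_2^{2^*}$, so this region is bounded by $C\|\nabla u\|_2^{q_\alpha}$.

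In the polar cone $\{\cos\theta\le\delta\}$, the loss $\cos^{-(N-2)}\theta$ has to be compensated. Here I would use $|x|^\alpha\le C|x'|^{\alpha}\cos^{-\alpha}\theta$ only partially. Alternatively, I would use the observation that near the axis $u$ is controlled by its value at the point of the same radius with latitude $\theta_0$, where $\cos\theta_0=\delta$. That is, on the polar cap $u(x)\le u(x_{\theta_0})$ with $x_{\theta_0}$ in the good region, so $u\le C_\delta r^{-(N-2)/2}\|\nabla u\|_2$ everywhere. This should work directly: by the monotonicity in (ii), the polar values are dominated by those at latitude $\theta_0$. The bound of Step 1 then holds uniformly, with constant $C_\delta$, on all of $\Omega$, and Step 2 closes with one global estimate.

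\textbf{Main obstacle.} The real difficulty is Step 1 near the equator, in a non-radial, possibly non-star-shaped $\Omega$: the rays from the cap may exit and re-enter $\Omega$. I would handle this by extending $u$ by zero to a large ball $B_R\supset\Omega$. The extension remains $\G$-invariant and latitude-monotone, thanks to admissibility (b). The estimate is then run in $B_R$, where the rays are full segments, making the constant depend on $\Omega$ only through $R$.
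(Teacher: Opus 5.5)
There is a genuine gap in Step~1. The pointwise bound you expect is false near the equator, which is exactly where you use it. In fact no bound $u(x)\le C(|x|)\|\nabla u\|_2$ can hold at equatorial points for functions of $\K$.

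\emph{Counterexample.} Fix $0<r_0<R_\Omega$ and let $d(x):=\big((|x'|-r_0)^2+x_N^2\big)^{1/2}$ be the distance from the equatorial sphere $\{x_N=0,\ |x'|=r_0\}$, which lies in $\Omega$ by Lemma~\ref{lem:disc}. Choose $\delta\in(0,r_0)$ with $\{d\le\delta\}\subset\Omega$, and put $u:=\log\log(e\delta/d)$ on $\{d<\delta\}$, $u:=0$ elsewhere. In polar coordinates $(\rho,\vartheta)$ around $(r_0,0)$ in the meridian half-plane the volume element is $\omega_{N-2}(r_0+\rho\cos\vartheta)^{N-2}\rho\,\dd\rho\,\dd\vartheta$, and $|\nabla d|=1$. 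Hence $\int|\nabla u|^2\le C\int_0^\delta\frac{\dd\rho}{\rho\log^2(e\delta/\rho)}<\infty$. Since $d^2=|x|^2+r_0^2-2r_0|x'|$, Lemma~\ref{lem:geom}(iii) gives $d(\sigma_\varphi x)\le d(x)$ on $W_\varphi$, so $u$ satisfies \eqref{eq:reflK}. It is also $\G$-invariant, hence $u\in\K$. The continuous truncations $\min\{u,n\}\in\K$ (Lemma~\ref{lem:Kprop}(d)) have Dirichlet norm at most $\|\nabla u\|_2$, yet take the value $n$ at the equatorial point $r_0e_1$.

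\emph{What went wrong.} The error is in the size of $S_x$. It is a zone around the equator: almost the whole sphere when $x$ is near a pole, and it shrinks to the equatorial $(N-2)$-sphere, of zero surface measure, as $\theta\to0$. Your lower bound $c\,r^{N-1}\cos^{N-2}\theta$ therefore fails at $\theta=0$. Run correctly, the averaging gives $u(x)\le C(\theta_0)|x|^{-(N-2)/2}\|\nabla u\|_2$ only for latitudes $\theta\ge\theta_0>0$, with $C(\theta_0)\to\infty$ as $\theta_0\to0$. The degeneration is at the equator, not at the poles. So Step~2, which dominates polar values by equatorial ones, runs in the wrong direction.

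\emph{The missing idea.} Near the equator a function of $\K$ is genuinely a function of the two meridian variables $(|x'|,x_N)$, with measure $|x'|^{N-2}\dd|x'|\dd x_N$, and two-dimensional $H^1$ controls integrals, not values. The paper proves an integral inequality valid for every $O(N-1)$-invariant function (Proposition~\ref{prop:cyl}):
$\int|\phi|^{2^*+2\kappa}|x'|^{\kappa(N-2)}\le C_\kappa\|\nabla\phi\|_2^{2^*+2\kappa}$.
It is obtained from the half-plane estimate of Lemma~\ref{lem:GN} by induction on integer $\kappa$, followed by interpolation. The weight vanishes on the axis, the only place where $O(N-1)$-invariant functions can concentrate at points.

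The monotonicity \eqref{eq:reflK} is used only in Lemma~\ref{lem:transfer}. Via $\sigma_{\pi/4}$, the integral over the polar cones is bounded by the integral over $S=\{|x_N|<|x'|\}$, where $|x|^2<2|x'|^2$. Taking $\kappa=\alpha/(N-2)$ gives exactly $2^*+2\kappa=q_\alpha$.

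Your averaging argument, correctly oriented, could replace Lemma~\ref{lem:transfer} on $\{\theta\ge\theta_0\}$ (pointwise bound plus Sobolev). On $\{\theta<\theta_0\}$, however, you still need an inequality of the type of Proposition~\ref{prop:cyl}, and your proposal does not contain one. Finally, exit and re-entry of rays is not the real obstacle: extending $u$ by zero to $\R^N$ already disposes of it.
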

We believe that this inequality is interesting in its own right. Moreover, it immediately yields the compactness of sequences $u_n$ in $\K$. At this point, considering the functional
\[
F(u):=\int_\Omega|x|^\alpha|u|^{p+1}\dd x,\qquad \Sigma:=\{u\in\K:\ F(u)=1\}, \qquad \mathsf S:=\inf_{u\in\Sigma}\int_\Omega|\nabla u|^2\dd x,
\]
it is not difficult to prove the existence of a minimizer. Clearly, this minimizer satisfies a variational inequality, and the final step consists in showing that it also satisfies equation \eqref{eq:P}. This last step is far from trivial as well, and relies on the invariance of $\K$ under the solution operator of the Poisson problem.

Via the Kelvin transform, Theorem~\ref{thm:main} also yields solutions of supercritical problems in exterior domains $\R^N\setminus D$, where $D$ belongs to a class of axially symmetric obstacles dual to admissible domains (Section~\ref{sec:kelvin}).  See Figure \ref{fig:duality}.
\begin{theorem}\label{thm:ext}
Let $N\ge3$, $\beta\in\R$, and
\[
p>\max\Big\{p_\beta,\ \frac{N+2+\beta}{N-2}\Big\},\qquad p_\beta=\frac{N+2+2\beta}{N-2}.
\]
Let $D$ be an admissible obstacle as in Definition \ref{def:polar},  with Lipschitz continuous boundary.  Then the problem
\begin{equation}\label{eq:ext}
-\Delta v=|y|^\beta v^p,\quad v>0\quad\text{in }\R^N\setminus D,\qquad v=0\ \text{on }\partial D,\qquad v(y)\to0\ \text{as }|y|\to\infty,
\end{equation}
has a solution $v\in C^2(\R^N\setminus D)$ with the following properties.
\begin{enumerate}[label=\textup{(\roman*)}]
\item Fast decay: $|y|^{N-2}v(y)\to\ell>0$ as $|y|\to\infty$.
\item $\int_{\R^N\setminus D}|y|^\beta v^{p+1}\dd y<\infty$ and $\int_{\R^N\setminus D}|\nabla v|^2\dd y<\infty$.
\item $v$ is $\G$-invariant and, on each sphere $\partial B_r$, non-increasing from the equator to the poles (in the sense of Theorem~\ref{thm:main}(ii)).
\end{enumerate}
\end{theorem}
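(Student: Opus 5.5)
The plan is to obtain Theorem~\ref{thm:ext} from Theorem~\ref{thm:main} by the Kelvin transform. Given $D$, let $\Omega:=\{x=y/|y|^2:\ y\in\R^N\setminus \overline D\}\cup\{0\}$. By Definition~\ref{def:polar}, which I take to be designed exactly so that this holds, $\Omega$ is an admissible domain containing the origin. Inversion is a smooth diffeomorphism away from $0$ and $\partial D$ is compact and avoids the origin, so $\partial\Omega$ is Lipschitz. Given a solution $u$ of \eqref{eq:P} in $\Omega$ with exponent $\alpha$, set
\[
v(y):=|y|^{2-N}u\big(y/|y|^2\big),\qquad y\in\R^N\setminus D .
\]
The standard identity $\Delta v(y)=|y|^{-N-2}(\Delta u)(y/|y|^2)$, together with $u(x)=|y|^{N-2}v(y)$ and $|x|=|y|^{-1}$, gives
\[
-\Delta v=|y|^{-N-2-\alpha}\big(|y|^{N-2}v\big)^p=|y|^{\beta}v^p,\qquad \beta=(N-2)p-N-2-\alpha .
\]
So I choose $\alpha:=(N-2)p-N-2-\beta$.

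Next I check the hypotheses of Theorem~\ref{thm:main}. The condition $\alpha>0$ is equivalent to $p>\frac{N+2+\beta}{N-2}$. The condition $p<p_\alpha=\frac{N+2+2\alpha}{N-2}$ becomes $(N-2)p<N+2+2(N-2)p-2N-4-2\beta$, i.e.\ $p>p_\beta$. Thus both assumptions on $p$ are exactly what is needed, and Theorem~\ref{thm:main} supplies $u\in C^2(\Omega)\cap C(\overline\Omega)$. Consequently $v\in C^2(\R^N\setminus D)$ is positive, and $v=0$ on $\partial D$ because $u=0$ on $\partial\Omega$. Moreover $|y|^{N-2}v(y)=u(y/|y|^2)\to u(0)=:\ell>0$ as $|y|\to\infty$. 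This gives (i), and also $v\to0$ at infinity.

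For (ii), I change variables $x=y/|y|^2$, so $\dd y=|x|^{-2N}\dd x$. Then
\[
\int_{\R^N\setminus D}|y|^\beta v^{p+1}\dd y=\int_\Omega |x|^{-\beta+(N-2)(p+1)-2N}u^{p+1}\dd x=\int_\Omega|x|^{\alpha}u^{p+1}\dd x<\infty,
\]
since the exponent equals $(N-2)p-N-2-\beta=\alpha$ and $u$ is bounded.

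For the Dirichlet integral I use the conformal invariance $\int|\nabla v|^2\dd y=\int|\nabla u|^2\dd x$. This holds for $u\in C^\infty_c(\Omega\setminus\{0\})$: the pointwise formula leaves only a divergence term, which integrates to zero. It extends to $H^1_0(\Omega)$ by density, because a point has zero capacity when $N\ge3$. I expect this density and limiting step, namely that the transformed $v$ is the genuine $H^1$-limit and no boundary or infinity terms survive, to be the only point needing care. For the extension I would use Fatou's lemma on an approximating sequence $u_n\to u$ in $H^1_0$, for which $v_n\to v$ locally uniformly away from $\partial D$.

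Finally, for (iii): inversion commutes with $\G$ and maps each sphere $\partial B_r$ to $\partial B_{1/r}$ while preserving latitude, and the factor $|y|^{2-N}$ is constant on spheres. Therefore the $\G$-invariance and latitude monotonicity of $u$ from Theorem~\ref{thm:main}(i)--(ii) transfer directly to $v$.
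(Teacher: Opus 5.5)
Your proposal is correct and follows the paper's proof essentially step by step. Both use the Kelvin transform to the dual admissible domain $\Omega_D$ (Lemma~\ref{lem:dualdom}), the choice $\alpha=(N-2)p-(N+2+\beta)$ with the same two equivalences, and the same derivations of (i), (iii) and the weighted integral in (ii). The only deviation is the Dirichlet integral: the paper simply uses $|\nabla v(y)|\le C|y|^{1-N}$ for large $|y|$, since $u\in C^1$ near $0$. Your conformal-invariance argument also works and even gives $\|\nabla v\|_2=\|\nabla u\|_2$; its limit step is cleanest by noting that the Kelvin map is a linear isometry, so the transformed approximants are Cauchy in $D^{1,2}(\R^N)$, rather than by Fatou's lemma.
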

In particular, in the Lane--Emden case we obtain:

\begin{corollary}\label{cor:ext}
Let $N\ge3$ and let $D$ be an admissible obstacle as in Definition \ref{def:polar}, with Lipschitz boundary. For every $p>\frac{N+2}{N-2}$ the problem $-\Delta v=v^p$, $v>0$ in $\R^N\setminus D$, $v=0$ on $\partial D$, has a solution with fast decay $|y|^{N-2}v(y)\to\ell>0$ as $|y|\to\infty$.
\end{corollary}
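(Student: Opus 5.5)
The plan is to obtain this as the special case $\beta=0$ of Theorem~\ref{thm:ext}, so the work reduces to checking that the hypotheses of that theorem hold when $\beta=0$.

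With $\beta=0$ the weight $|y|^\beta$ is identically $1$, so problem \eqref{eq:ext} becomes $-\Delta v=v^p$, $v>0$ in $\R^N\setminus D$, $v=0$ on $\partial D$, $v\to0$ at infinity. This is exactly the Lane--Emden exterior problem in the statement. Next we compute the threshold in Theorem~\ref{thm:ext}:
\[
p_0=\frac{N+2+2\cdot 0}{N-2}=\frac{N+2}{N-2},\qquad \frac{N+2+0}{N-2}=\frac{N+2}{N-2}.
\]
Hence
\[
\max\Big\{p_0,\ \frac{N+2}{N-2}\Big\}=\frac{N+2}{N-2},
\]
and the assumption $p>\frac{N+2}{N-2}$ is precisely the hypothesis of Theorem~\ref{thm:ext}. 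The assumptions on $D$ (an admissible obstacle in the sense of Definition~\ref{def:polar}, with Lipschitz boundary) are the same in both statements.

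Theorem~\ref{thm:ext} then gives a solution $v\in C^2(\R^N\setminus D)$ with $v>0$, $v=0$ on $\partial D$ and $v\to0$ at infinity. Property (i) of that theorem supplies the fast decay $|y|^{N-2}v(y)\to\ell>0$.

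There is no real obstacle here. The only point needing care is that both exponent conditions of Theorem~\ref{thm:ext} collapse to the Sobolev exponent when $\beta=0$. Properties (ii) and (iii) of Theorem~\ref{thm:ext} also carry over: finite energy, $\G$-invariance and monotonicity in the latitude.
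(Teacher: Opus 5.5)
Your proposal is correct and matches the paper's proof exactly: take $\beta=0$ in Theorem~\ref{thm:ext}, observe that both exponent conditions reduce to $p>\frac{N+2}{N-2}$, and read off the fast decay from property (i).
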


For the exterior of a ball, the existence of a unique positive radial solution with fast decay is classical for every $p>\frac{N+2}{N-2}$, see, for instance, \cite{GP} and \cite{DdPMW}. For a general smooth bounded domain, Dávila, del Pino, Musso and Wei \cite{DdPMW} proved the existence of a fast-decay solution when $p$ is sufficiently close to the Sobolev critical exponent. Later, Cowan in \cite{C} obtained fast-decay solutions for small perturbations of the exterior of a ball for every supercritical exponent outside a discrete set of resonant values. To the best of our knowledge, Corollary~\ref{cor:ext} is the first result providing a fast-decay solution for every $p>\frac{N+2}{N-2}$ on a non-perturbative class of non-spherical obstacles. In particular, the result applies to every prolate ellipsoid of revolution.

\smallskip
A crucial role in our argument is played by the invariance of the cone $\K$ with respect to the symmetry group $O(N-1)\times O(1)$.
It could be replaced  by $O(m)\times O(n)$, $m+n=N$, with reflections pushing points towards $\{x''=0\}$, $x=(x',x'')\in\R^m\times\R^n$. The polarization argument of Section~\ref{sec:inv} carries over. However, when $n\ge2$ functions of the corresponding cone may concentrate on the $(m-1)$-spheres in $\{x''=0\}$, so compactness holds only for $p+1<\frac{2(n+1)}{n-1}$, which covers the whole range $p<p_\alpha$ only if $\alpha<\frac{2(m-1)}{n-1}$.

\smallskip
We conclude this introduction discussing the location of the maximum point. Theorem~\ref{thm:main} excludes concentration away from the origin at the level of the compactness argument. However, it does not say that $u$ attains its maximum at $0$, nor that $u$ is close to Ni's radial solution. 
When $\Omega=B$, both $u$ and the radial solution belong to $\K$, and $\mathsf S\le\mathsf S_{\rm rad}$; by Corollary~\ref{cor:ball} the inequality is strict for $\alpha>2$ and $p$ close to $p_\alpha$. For $0<\alpha\le2$ and $p$ close to $p_\alpha$ the radial solution has Morse index $N+1$ \cite{AG}, but the $N$ negative directions of degree one are not $\G$-invariant, so they are not compatible with $\K$. Whether $\mathsf S=\mathsf S_{\rm rad}$ in this case is open.

\smallskip
The paper is organized as follows. Section~\ref{sec:K} introduces the reflections, the admissible domains and the cone $\K$. Section~\ref{sec:emb} contains the weighted inequalities and the compactness of $\K$. Section~\ref{sec:inv} proves the invariance of $\K$. Section~\ref{sec:proof} proves Theorem~\ref{thm:main}. Sections~\ref{sec:examples} and~\ref{sec:kelvin} are devoted to examples of
admissible domains. We stress that these sections are an
essential part of the paper: they show that the class of admissible domains is
rich; it contains non-convex, non-smooth, non-star-shaped and non-simply
connected domains, as well as exterior domains via the Kelvin transform, 
(see Figures~\ref{fig:admissible}, \ref{fig:3d} and~\ref{fig:duality})
 and
they provide criteria that are easy to check in concrete cases. 

%=====================================================================
\section{Reflections, admissible domains and the cone \texorpdfstring{$\K$}{K}}\label{sec:K}
%=====================================================================

We write $x=(x_1,\dots,x_N)=(x',x_N)$ and $e_1,\dots,e_N$ for the canonical basis. For $x\ne0$ the \emph{latitude} of $x$ is
\[
\theta(x):=\arcsin\frac{|x_N|}{|x|}\in\Big[0,\frac\pi2\Big],\qquad\text{so that}\quad |x'|=|x|\cos\theta(x),\quad |x_N|=|x|\sin\theta(x).
\]
$B_r$ is the open ball of radius $r$ centred at $0$, and $\omega_{N-2}:=\mathcal H^{N-2}(S^{N-2})$ is the measure of the unit sphere of $\R^{N-1}$. We use the following elements of $\G$ (see \eqref{G}): 
\[
A(x):=(-x_1,x_2,\dots,x_N),\qquad B(x):=(x_1,\dots,x_{N-1},-x_N),\qquad C:=AB.
\]
Every $u\in H^1_0(\Omega)$ is extended by zero outside $\Omega$, so that $u\in H^1(\R^N)$. 
We use the norm $\|u\|:=\|\nabla u\|_{L^2(\Omega)}$ on $H^1_0(\Omega)$ and we denote by $H^1_{0,\G}(\Omega)$ the closed subspace of $\G$-invariant functions of $H^1_0(\Omega)$. 

For $\varphi\in(0,\pi/2)$ and using the notations of the introduction, define the open ``quadrants''
\[
Q^+_\varphi:=\{x:\ (x\cdot\nu_\varphi)(x\cdot\tau_\varphi)>0\},\qquad
Q^-_\varphi:=\{x:\ (x\cdot\nu_\varphi)(x\cdot\tau_\varphi)<0\}.
\]
Their complement $Z_\varphi:=\nu_\varphi^\perp\cup\tau_\varphi^\perp$ has measure zero. When no confusion arises we drop the index $\varphi$.

In the plane $\Pi:=\mathrm{span}\{e_1,e_N\}$, with polar angle measured from $e_1$, the point $r(\cos\vartheta\,e_1+\sin\vartheta\,e_N)$ is mapped by $\sigma_\varphi$ to $r(\cos(2\varphi-\vartheta)e_1+\sin(2\varphi-\vartheta)e_N)$. So $\sigma_\varphi$ is the reflection across the line $\R\tau_\varphi$, which makes an angle $\varphi$ with the equator. Points of $W_\varphi$ lie ``above'' this line and are sent ``below'' it, i.e.\ towards the equator. The next lemma makes this precise.

\begin{lemma}\label{lem:geom}
Let $\varphi\in(0,\pi/2)$ and write $\sigma=\sigma_\varphi$, $\nu=\nu_\varphi$, $\tau=\tau_\varphi$.
\begin{enumerate}[label=\textup{(\roman*)}]
\item $\sigma$ is an orthogonal involution with $|\sigma x|=|x|$. %It changes the sign of $x\cdot\nu$ and leaves $x\cdot\tau$ and $x_2,\dots,x_{N-1}$ unchanged {\Ma io questo non lo scriverei, segue automaticamente dalla definizione di $\sigma$}. 
It commutes with $C$, and
\[
\sigma(W_\varphi)=\{x\cdot\nu<0,\ x\cdot\tau>0\},\quad C(W_\varphi)=\{x\cdot\nu<0,\ x\cdot\tau<0\},\quad \sigma(Q^\pm_\varphi)=Q^\mp_\varphi,\quad C(Q^\pm_\varphi)=Q^\pm_\varphi .
\]
Moreover $Q^+_\varphi=W_\varphi\cup C(W_\varphi)$.
\item For every $x$,
\begin{equation}\label{eq:coords}
x_N=\cos\varphi\,(x\cdot\nu)+\sin\varphi\,(x\cdot\tau),\qquad (\sigma x)_N=-\cos\varphi\,(x\cdot\nu)+\sin\varphi\,(x\cdot\tau).
\end{equation}
\item If $x\in\overline{W_\varphi}$, then $x_N\ge0$ and $|(\sigma x)_N|\le |x_N|$. Consequently $|(\sigma x)'|\ge|x'|$ and $\theta(\sigma x)\le\theta(x)$ for $x\ne0$.
\item If $x\cdot\tau=0$, then $(\sigma x)_N=-x_N$ and $|(\sigma x)'|=|x'|$. If $x\cdot\nu=0$, then $\sigma x=x$.
\end{enumerate}
\end{lemma}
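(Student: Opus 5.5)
The lemma is elementary linear algebra in the orthonormal frame $\{\nu,\tau\}$ of the plane $\Pi$. The plan is to write every $x$ as $x=(x\cdot\nu)\nu+(x\cdot\tau)\tau+x^\perp$, with $x^\perp\in\Pi^\perp=\mathrm{span}\{e_2,\dots,e_{N-1}\}$, and to read off each claim from how $\sigma$ and $C$ act on the three components.

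\emph{Part (i).} By definition, $\sigma$ flips the $\nu$-component and fixes the rest, so $\sigma x=-(x\cdot\nu)\nu+(x\cdot\tau)\tau+x^\perp$. Hence $\sigma$ is an orthogonal involution and preserves $|x|$. The map $C=AB$ acts as $-\mathrm{Id}$ on $\Pi$ and as the identity on $\Pi^\perp$. It therefore commutes with every linear map preserving $\Pi$ and $\Pi^\perp$, in particular with $\sigma$. The sign pattern of $(x\cdot\nu,x\cdot\tau)$ determines the images of the quadrants:
\begin{itemize}
\item $\sigma$ changes $(+,+)$ into $(-,+)$;
\item $C$ changes $(+,+)$ into $(-,-)$;
\item $\sigma$ reverses the sign of the product $(x\cdot\nu)(x\cdot\tau)$, while $C$ preserves it.
\end{itemize}
This gives all the stated identities, including $Q^+=W\cup C(W)$.

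\emph{Part (ii).} Compute $e_N\cdot\nu=\cos\varphi$ and $e_N\cdot\tau=\sin\varphi$, and note $e_N\perp\Pi^\perp$. Thus $x_N=x\cdot e_N=\cos\varphi\,(x\cdot\nu)+\sin\varphi\,(x\cdot\tau)$. Applying the same formula to $\sigma x$ flips the sign of the first term.

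\emph{Part (iii).} On $\overline W$ set $a=x\cdot\nu\ge0$ and $b=x\cdot\tau\ge0$. Since $\cos\varphi,\sin\varphi>0$, part (ii) gives $x_N=a\cos\varphi+b\sin\varphi\ge0$. It also gives $|(\sigma x)_N|=|b\sin\varphi-a\cos\varphi|\le a\cos\varphi+b\sin\varphi=x_N$. The remaining claims follow from $|x'|^2=|x|^2-x_N^2$, the equality $|\sigma x|=|x|$, and the monotonicity of $\arcsin$.

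\emph{Part (iv).} If $b=0$, part (ii) gives $(\sigma x)_N=-x_N$, and the norm argument gives $|(\sigma x)'|=|x'|$. If $a=0$, then $\sigma x=x$ directly from the definition.

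There is no real obstacle. The only point needing care is justifying that $C$ commutes with $\sigma$, which follows from $C|_\Pi=-\mathrm{Id}$.
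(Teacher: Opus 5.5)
Your proposal is correct and follows essentially the same route as the paper. Both arguments use the same ingredients: the decomposition along $\nu$, $\tau$ and $\Pi^\perp$; the fact that $C$ is $-\mathrm{Id}$ on $\Pi$ and the identity on $\Pi^\perp$, which gives commutation with $\sigma$ and the sign patterns on the quadrants; the expansion $e_N=\cos\varphi\,\nu+\sin\varphi\,\tau$ for (ii); and the triangle inequality together with $|\sigma x|=|x|$ for (iii) and (iv).
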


\begin{proof}
(i) $\sigma$ is the reflection through a hyperplane containing $0$, hence an orthogonal involution. Since $\nu\in\Pi$, $\sigma$ acts as the identity on $\Pi^\perp=\mathrm{span}\{e_2,\dots,e_{N-1}\}$. Since $\tau\perp\nu$, $\sigma\tau=\tau$ and so $\sigma x\cdot\tau=x\cdot\tau$, while $\sigma x\cdot\nu=-x\cdot\nu$. The map $C$ acts as $-\mathrm{Id}$ on $\Pi$ and as the identity on $\Pi^\perp$, so it commutes with every linear map preserving $\Pi$ and $\Pi^\perp$, in particular with $\sigma$. Moreover $C$ changes the signs of both $x\cdot\nu$ and $x\cdot\tau$. The identities for the quadrants follow.

(ii) We have $e_N=\cos\varphi\,\nu+\sin\varphi\,\tau$. Indeed
\[
\cos\varphi(-\sin\varphi\,e_1+\cos\varphi\,e_N)+\sin\varphi(\cos\varphi\,e_1+\sin\varphi\,e_N)=e_N .
\]
Hence $x_N=x\cdot e_N=\cos\varphi\,(x\cdot\nu)+\sin\varphi\,(x\cdot\tau)$. Replacing $x$ by $\sigma x$ and using (i) gives the second formula.

(iii) If $x\cdot\nu\ge0$ and $x\cdot\tau\ge0$, then both terms in the formula for $x_N$ are non-negative, so $x_N\ge0$. Furthermore
\[
-x_N=-\cos\varphi\,(x\cdot\nu)-\sin\varphi\,(x\cdot\tau)\le(\sigma x)_N\le\cos\varphi\,(x\cdot\nu)+\sin\varphi\,(x\cdot\tau)=x_N ,
\]
i.e.\ $|(\sigma x)_N|\le x_N=|x_N|$. Since $|\sigma x|=|x|$, we get $|(\sigma x)'|^2=|x|^2-(\sigma x)_N^2\ge|x'|^2$ and $\sin\theta(\sigma x)=|(\sigma x)_N|/|x|\le\sin\theta(x)$.

(iv) If $x\cdot\tau=0$, then \eqref{eq:coords} gives $x_N=\cos\varphi\,(x\cdot\nu)=-(\sigma x)_N$. Hence $|(\sigma x)'|^2=|x|^2-x_N^2=|x'|^2$. If $x\cdot\nu=0$, then $\sigma x=x$ by definition.
\end{proof}

\begin{corollary}\label{cor:reflect}
Let $\Omega$ be admissible and $\varphi\in(0,\pi/2)$. Then
\[
\sigma_\varphi\big(\Omega\cap\overline{Q^+_\varphi}\big)\subset\Omega .
\]
Moreover, if $v:\R^N\to\R$ satisfies $v\circ g=v$ pointwise for every $g\in\G$, then $v(\sigma_\varphi x)=v(x)$ for every $x\in Z_\varphi$.
\end{corollary}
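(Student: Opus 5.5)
The two claims are independent, and I will prove them separately.

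\emph{First claim.} Take $x\in\Omega\cap\overline{Q^+_\varphi}$ and put $y:=\sigma_\varphi x$. By Lemma~\ref{lem:geom}(i), $|y|=|x|$, so by condition (b) of Definition~\ref{def:adm} it suffices to show $|y_N|\le|x_N|$.
\begin{itemize}
\item If $x\in\overline{W_\varphi}$, this is exactly Lemma~\ref{lem:geom}(iii).
\item Otherwise $x\in\overline{C(W_\varphi)}$, since $Q^+_\varphi=W_\varphi\cup C(W_\varphi)$ and $C$ is a linear isometry, so closures are preserved. Write $x=Cz$ with $z\in\overline{W_\varphi}$. As $\sigma_\varphi$ commutes with $C$, we get $\sigma_\varphi x=C\sigma_\varphi z$. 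The map $C$ preserves $|\cdot|$ and $|(\cdot)_N|$, since it only changes the signs of $x_1$ and $x_N$. Lemma~\ref{lem:geom}(iii) applied to $z$ then gives $|(\sigma_\varphi x)_N|=|(\sigma_\varphi z)_N|\le|z_N|=|x_N|$.
\end{itemize}
In both cases $y\in\Omega$. Note that $\Omega$ need not be convex. Condition (b) is used directly, as a pointwise condition on spheres, and nothing else about $\Omega$ is needed.

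\emph{Second claim.} Let $x\in Z_\varphi=\nu_\varphi^\perp\cup\tau_\varphi^\perp$.
\begin{itemize}
\item If $x\cdot\nu_\varphi=0$, then $\sigma_\varphi x=x$ by Lemma~\ref{lem:geom}(iv), and there is nothing to prove.
\item If $x\cdot\tau_\varphi=0$, Lemma~\ref{lem:geom}(iv) gives $(\sigma_\varphi x)_N=-x_N$ and $|(\sigma_\varphi x)'|=|x'|$. Since $O(N-1)$ acts transitively on spheres of $\R^{N-1}$, there is $g\in O(N-1)$ with $g x'=(\sigma_\varphi x)'$. Then $(g,-1)\in\G$ maps $x$ to $\sigma_\varphi x$, and the pointwise invariance $v\circ(g,-1)=v$ yields $v(\sigma_\varphi x)=v(x)$.
\end{itemize}

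The only point needing care is keeping the statements pointwise and not just a.e. This is why the hypothesis is $v\circ g=v$ everywhere, and why the first claim uses the closed quadrant together with the closed-set version of Lemma~\ref{lem:geom}(iii). There is no real obstacle beyond this bookkeeping.
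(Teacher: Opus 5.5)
Your proof is correct and follows the paper's argument. You split $\overline{Q^+_\varphi}=\overline{W_\varphi}\cup C(\overline{W_\varphi})$, use Lemma~\ref{lem:geom}(iii) with condition (b) and the commutation of $\sigma_\varphi$ with $C$, and on $Z_\varphi$ use Lemma~\ref{lem:geom}(iv) plus transitivity of $O(N-1)$. The only cosmetic difference is in the $C(\overline{W_\varphi})$ case: you apply (b) directly at $x$, using that $C$ preserves $|x|$ and $|x_N|$, whereas the paper passes through the $C$-invariance of $\Omega$ ($\sigma_\varphi x=C\sigma_\varphi y\in C\Omega=\Omega$).
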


\begin{proof}
Let $x\in\Omega\cap\overline{W_\varphi}$. By Lemma~\ref{lem:geom}(iii), $|\sigma x|=|x|$ and $|(\sigma x)_N|\le|x_N|$, so $\sigma x\in\Omega$ by condition (b). Now let $x\in\Omega\cap C(\overline{W_\varphi})$. Then $y:=Cx\in\Omega\cap\overline{W_\varphi}$, because $\Omega$ is $C$-invariant. Hence $\sigma x=\sigma Cy=C\sigma y\in C\Omega=\Omega$. Since $\overline{Q^+_\varphi}=\overline{W_\varphi}\cup C(\overline{W_\varphi})$, the first claim follows.

For the second claim, if $x\in\tau^\perp$ then by Lemma~\ref{lem:geom}(iv) the points $x$ and $\sigma x$ have the same values of $|x'|$ and $|x_N|$. Since $O(N-1)$ acts transitively on the spheres of $\R^{N-1}$ and $O(1)$ changes the sign of $x_N$, there is $g\in\G$ with $\sigma x=gx$, so $v(\sigma x)=v(x)$. If $x\in\nu^\perp$, then $\sigma x=x$.
\end{proof}

The cone $\K$ is designed to encode, in a weak form, monotonicity from the poles towards the equator. Indeed, by Lemma~\ref{lem:geom}, for $x\in W_\varphi$ the reflected point $\sigma_\varphi x$ has the same distance from the origin and a smaller latitude. Thus condition~\eqref{eq:reflK}
 requires that the value of a function in $\K$ does not decrease when points are reflected towards the equatorial region. 

By Corollary~\ref{cor:reflect}, if $x\in W_\varphi\cap\Omega$ then $\sigma_\varphi x\in\Omega$, so \eqref{eq:reflK} compares values of $u$ inside $\Omega$. If $x\in W_\varphi\setminus\Omega$, then $u(x)=0$ and \eqref{eq:reflK} is trivial. 

We conclude this section by recording a few elementary properties of $\K$ that will be used in the sequel.

\begin{lemma}\label{lem:Kprop}
Let $\Omega$ be admissible.
\begin{enumerate}[label=\textup{(\alph*)}]
\item $\K$ is a convex cone, closed in $H^1_0(\Omega)$.
\item Every non-negative radial function of $H^1_0(\Omega)$ belongs to $\K$. 
\item If $u\in\K$ and $\varphi\in(0,\pi/2)$, then $u(\sigma_\varphi x)\ge u(x)$ for a.e.\ $x\in Q^+_\varphi$ and $u(\sigma_\varphi x)\le u(x)$ for a.e.\ $x\in Q^-_\varphi$.
\item If $u\in\K$ and $h:[0,\infty)\to[0,\infty)$ is non-decreasing and Lipschitz with $h(0)=0$, then $h(u)\in\K$.
\end{enumerate}
\end{lemma}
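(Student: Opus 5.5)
We prove the four items in turn; each follows from the definition of $\K$ and from Lemma~\ref{lem:geom} and Corollary~\ref{cor:reflect}.

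\emph{Part (a).} All constraints defining $\K$ are preserved by sums and by multiplication by non-negative scalars: $\G$-invariance, $u\ge0$ a.e., and the inequalities \eqref{eq:reflK}. Hence $\K$ is a convex cone. For closedness, let $u_n\in\K$ with $u_n\to u$ in $H^1_0(\Omega)$.
\begin{itemize}[label=]
\item $H^1_{0,\G}(\Omega)$ is closed, so $u$ is $\G$-invariant.
\item By Poincar\'e, $u_n\to u$ in $L^2$, so a subsequence converges a.e.
\item For fixed $\varphi$, $\sigma_\varphi$ is measure preserving, so $u_n\circ\sigma_\varphi\to u\circ\sigma_\varphi$ a.e. along a further subsequence.
\end{itemize}
Passing to the limit in $u_n\ge0$ and $u_n(\sigma_\varphi x)\ge u_n(x)$ on $W_\varphi$ gives the same inequalities for $u$. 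The exceptional null set depends on $\varphi$, but the definition is required for each $\varphi$ separately, so this causes no problem.

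\emph{Part (b).} A radial $u$ is $\G$-invariant. Since $|\sigma_\varphi x|=|x|$ by Lemma~\ref{lem:geom}(i), we have $u(\sigma_\varphi x)=u(x)$, which is \eqref{eq:reflK} with equality. Non-negativity is assumed.

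\emph{Part (c).} By Lemma~\ref{lem:geom}(i), $Q^+_\varphi=W_\varphi\cup C(W_\varphi)$.
\begin{itemize}[label=]
\item On $W_\varphi$ the inequality is the definition.
\item For $x\in C(W_\varphi)$, write $x=Cy$ with $y\in W_\varphi$. Since $C\in\G$, $u$ is $C$-invariant a.e., and $\sigma$ commutes with $C$. Therefore
\[
u(\sigma x)=u(C\sigma y)=u(\sigma y)\ge u(y)=u(Cy)=u(x)
\]
for a.e. $y$, and $C$ preserves measure. A.e. $\G$-invariance here means $u\circ g=u$ a.e. for each fixed $g$, which is what membership in $H^1_{0,\G}$ gives.
\item On $Q^-_\varphi=\sigma(Q^+_\varphi)$, take $x=\sigma z$ with $z\in Q^+_\varphi$. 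Then $u(\sigma x)=u(z)\le u(\sigma z)=u(x)$, using that $\sigma$ is an involution preserving measure.
\end{itemize}

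\emph{Part (d).} Since $h$ is Lipschitz with $h(0)=0$, the chain rule for Sobolev functions gives $h(u)\in H^1_0(\Omega)$, with $|\nabla h(u)|\le L|\nabla u|$. Composition preserves $\G$-invariance, and $h\ge0$. Monotonicity of $h$ transfers \eqref{eq:reflK}: $h(u(\sigma x))\ge h(u(x))$ wherever $u(\sigma x)\ge u(x)$.

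The only delicate points are the a.e. bookkeeping in (a) and (c), namely the subsequences and the measure preservation of $\sigma$ and $C$. None of these is a real obstacle.
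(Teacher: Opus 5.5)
Your proof is correct and follows the paper's argument item by item. In (a) you get closedness from a.e.\ convergence of a subsequence together with the measure-preserving maps $\sigma_\varphi$ and $g\in\G$. In (b) you use that radial functions are invariant under $\sigma_\varphi$ because $|\sigma_\varphi x|=|x|$. In (c) you use the decomposition $Q^+_\varphi=W_\varphi\cup C(W_\varphi)$ with $C\sigma_\varphi=\sigma_\varphi C$, plus the involution trick on $Q^-_\varphi$. In (d) you use the Lipschitz chain rule and the monotonicity of $h$.

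One small point: the ``further subsequence'' in (a) is unnecessary. Since $\sigma_\varphi$ maps null sets to null sets, $u_n\to u$ a.e.\ already gives $u_n\circ\sigma_\varphi\to u\circ\sigma_\varphi$ a.e.
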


\begin{proof}
(a) All the conditions defining $\K$ are preserved by sums and by multiplication by non-negative constants. If $u_n\in\K$ and $u_n\to u$ in $H^1_0(\Omega)$, then, up to a subsequence, $u_n\to u$ a.e.\ in $\R^N$. Since $\sigma_\varphi$ and the elements of $\G$ preserve the Lebesgue measure, also $u_n\circ\sigma_\varphi\to u\circ\sigma_\varphi$ and $u_n\circ g\to u\circ g$ a.e. Passing to the limit in the pointwise inequalities gives $u\in\K$. A closed convex set is weakly closed.

(b) If $u$ is radial, then $u\circ g=u$ for every orthogonal map $g$, in particular for $g\in\G$ and $g=\sigma_\varphi$. Since $0\in\Omega$, there is a ball $B_\rho\subset\Omega$, and any non-negative, non-zero radial function in $C^\infty_c(B_\rho)$ belongs to $\K$.

(c) On $W_\varphi$ this is \eqref{eq:reflK}. If $x\in C(W_\varphi)$, then $y:=Cx\in W_\varphi$ and, since $C$ commutes with $\sigma_\varphi$ and $u\circ C=u$,
\[
u(\sigma_\varphi x)=u(\sigma_\varphi Cy)=u(C\sigma_\varphi y)=u(\sigma_\varphi y)\ge u(y)=u(Cy)=u(x).
\]
This holds for a.e.\ $x$, because $C$ preserves the measure. Since $Q^+_\varphi=W_\varphi\cup C(W_\varphi)$, the first inequality follows. If $x\in Q^-_\varphi$, then $y:=\sigma_\varphi x\in Q^+_\varphi$ and $u(x)=u(\sigma_\varphi y)\ge u(y)=u(\sigma_\varphi x)$.

(d) Since $h$ is Lipschitz with $h(0)=0$, $h(u)\in H^1_0(\Omega)$ (see e.g.\ \cite[Chapter II, Theorem A.1]{KS}). Clearly $h(u)\ge0$ and $h(u)$ is $\G$-invariant, and $u(\sigma x)\ge u(x)$ implies $h(u(\sigma x))\ge h(u(x))$ because $h$ is non-decreasing.
\end{proof}

\begin{lemma}\label{lem:monot}
Let $\Omega$ be admissible and $u\in\K\cap C(\Omega)$.
\begin{enumerate}[label=\textup{(\roman*)}]
\item For every $\varphi\in(0,\pi/2)$ and every $x\in\Omega\cap W_\varphi$ we have $u(\sigma_\varphi x)\ge u(x)$.
\item $u$ satisfies the monotonicity property \textup{(ii)} of Theorem~\ref{thm:main}.
\end{enumerate}
\end{lemma}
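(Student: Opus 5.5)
Part (i) is a passage from ``a.e.'' to ``everywhere'' by continuity. Fix $\varphi$ and $x\in\Omega\cap W_\varphi$. Since $W_\varphi$ is open, $\Omega$ is open and $\sigma_\varphi$ is continuous, the set $U:=\Omega\cap W_\varphi$ is an open neighbourhood of $x$. By Corollary~\ref{cor:reflect}, $\sigma_\varphi(U)\subset\Omega$. Hence the function $x\mapsto u(\sigma_\varphi x)-u(x)$ is continuous on $U$. By \eqref{eq:reflK} it is non-negative a.e.\ on $U$. A continuous function that is non-negative a.e.\ on an open set is non-negative everywhere there, since the set where it is negative is open and null, hence empty. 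One point needs care: $u$ is only an $H^1_0$ class, so we fix its continuous representative on $\Omega$. \eqref{eq:reflK} holds for that representative, because changing $u$ on a null set changes $u\circ\sigma_\varphi$ only on a null set ($\sigma_\varphi$ preserves measure).

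For (ii), the first claim $x_{\theta'}\in\Omega$ follows directly from condition (b), since $|x_{\theta'}|=r=|x_\theta|$ and $r\sin\theta'\le r\sin\theta$. For the inequality, I first reduce to $e=e_1$. The continuous representative is invariant under $\G$ pointwise: $u\circ g$ and $u$ agree a.e.\ and are both continuous on the $\G$-invariant set $\Omega$, so they coincide. Choosing $g\in O(N-1)$ with $ge_1=e$ gives the reduction. Now $x_\theta$ and $x_{\theta'}$ lie in the plane $\Pi$. Take $\varphi:=(\theta+\theta')/2\in(0,\pi/2)$ when $\theta'>0$ or $\theta<\pi/2$. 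By the description of $\sigma_\varphi$ on $\Pi$, $\sigma_\varphi x_\theta=x_{2\varphi-\theta}=x_{\theta'}$. Moreover, $x_\theta\cdot\nu_\varphi=r\sin(\theta-\varphi)>0$ and $x_\theta\cdot\tau_\varphi=r\cos(\theta-\varphi)>0$, since $0<\theta-\varphi<\pi/2$. Thus $x_\theta\in\Omega\cap W_\varphi$, and (i) gives $u(x_{\theta'})\ge u(x_\theta)$.

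The only leftover case is $\theta'=0$, $\theta=\pi/2$, where $\varphi=\pi/4$ still works: $\theta-\varphi=\pi/4\in(0,\pi/2)$. So in fact no case is excluded. If some boundary configuration did fail, one could instead pass to the limit along $\theta'_n\downarrow\theta'$, using continuity of $u$ and the openness of $\Omega$.

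The final sentence follows because every point of $\partial B_r\cap\Omega$ is of the form $x_\theta$, up to the $\G$-invariance. Its projection to the equator, $x_0=re$, lies in $\Omega$ and satisfies $u(x_0)\ge u(x_\theta)$. On the equator itself $u$ is constant, by $O(N-1)$-invariance.

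The main obstacle I expect is only the bookkeeping about representatives in (i). The geometric verification that $x_\theta\in W_\varphi$ with $\sigma_\varphi x_\theta=x_{\theta'}$ is an elementary angle computation.
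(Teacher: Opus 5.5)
Your proposal is correct and follows the paper's proof essentially step for step. For (i) you use continuity of $u\circ\sigma_\varphi-u$ on the open set $\Omega\cap W_\varphi$ via Corollary~\ref{cor:reflect}. For (ii) you take $\varphi=(\theta+\theta')/2$, check that $x_\theta\in W_\varphi$ and $\sigma_\varphi x_\theta=x_{\theta'}$, and apply (i). Your added care about the continuous representative and pointwise $\G$-invariance is a harmless refinement. The case split at $\theta'=0$, $\theta=\pi/2$ is unnecessary, since $0\le\theta'<\theta\le\pi/2$ always gives $\varphi\in(0,\pi/2)$ and $\theta-\varphi\in(0,\pi/4]$.
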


\begin{proof}
(i) By Corollary~\ref{cor:reflect}, $x\mapsto u(\sigma_\varphi x)-u(x)$ is a continuous function on the open set $\Omega\cap W_\varphi$. It is non-negative a.e.\ there, hence everywhere.

(ii) Let $e$, $r$, $\theta'<\theta$ be as in Theorem~\ref{thm:main}(ii), with $x_\theta\in\Omega$. Since $|x_{\theta'}|=r=|x_\theta|$ and $|(x_{\theta'})_N|=r\sin\theta'\le r\sin\theta=|(x_\theta)_N|$, condition (b) gives $x_{\theta'}\in\Omega$. By $O(N-1)$-invariance we may assume $e=e_1$. Put $\varphi:=(\theta+\theta')/2$. Since $0\le\theta'<\theta\le\pi/2$, we have $\varphi\in(0,\pi/2)$ and $\theta-\varphi=(\theta-\theta')/2\in(0,\pi/4]$. Hence
\[
x_\theta\cdot\nu_\varphi=r\sin(\theta-\varphi)>0,\qquad x_\theta\cdot\tau_\varphi=r\cos(\theta-\varphi)>0,
\]
so $x_\theta\in\Omega\cap W_\varphi$. By the description of $\sigma_\varphi$ in the plane $\Pi$, we have $\sigma_\varphi x_\theta=x_{2\varphi-\theta}=x_{\theta'}$. Part (i) gives $u(x_{\theta'})\ge u(x_\theta)$.
\end{proof}

%=====================================================================
\section{Weighted inequalities and compactness}\label{sec:emb}
%=====================================================================

The aim of this section is to show that the geometric conditions on the cone $\K$ imply the validity of a Sobolev inequality, in the spirit of Ni's paper (see \cite{Ni}). We begin by proving that invariance under the group $O(N-1)$ implies the existence of a weighted two-dimensional inequality.

\begin{proposition}\label{prop:cyl}
Let $N\ge3$ and $\kappa\ge0$. There is a constant $C_\kappa=C(N,\kappa)>0$ such that, for every bounded $O(N-1)$-invariant open set $\Omega\subset\R^N$ and every $O(N-1)$-invariant $\phi\in H^1_0(\Omega)$,
\begin{equation}\label{eq:cyl}
\int_\Omega|\phi|^{2^*+2\kappa}\,|x'|^{\kappa(N-2)}\dd x\le C_\kappa\,\|\nabla\phi\|_{L^2(\Omega)}^{2^*+2\kappa}.
\end{equation}
\end{proposition}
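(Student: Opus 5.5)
The plan is to reduce \eqref{eq:cyl} to a two-dimensional Gagliardo--Nirenberg inequality, applied on a dyadic--Whitney decomposition of the meridian half-plane. First I check that the inequality is scale-invariant. If $\phi_\lambda(x)=\phi(\lambda x)$, the left side scales like $\lambda^{-N-\kappa(N-2)}$ and $\|\nabla\phi_\lambda\|_2^{2^*+2\kappa}$ scales like $\lambda^{(2-N)(\frac N{N-2}+\kappa)}=\lambda^{-N-\kappa(N-2)}$. So only a scale-invariant localisation can work.

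Write $\phi(x)=f(s,z)$ with $s=|x'|>0$ and $z=x_N$. Then $\dd x=\omega_{N-2}s^{N-2}\dd s\dd z$ and $|\nabla\phi|=|\nabla_{s,z}f|$. Extend $\phi$ by zero to $\R^N$. Decompose the half-plane $\{s>0\}$ into the squares
\[
Q_{k,j}=(2^k,2^{k+1})\times(j2^k,(j+1)2^k),\qquad k,j\in\mathbb Z,
\]
and let $A_{k,j}\subset\R^N$ be the corresponding $O(N-1)$-invariant cells. On $Q_{k,j}$ the weight satisfies $s^{N-2}\simeq 2^{k(N-2)}$ with constants depending only on $N$. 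Set $q=2^*+2\kappa$ and $r=2^*$.

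The key local estimate comes from the unit square $Q=(1,2)\times(0,1)$. There one has the Gagliardo--Nirenberg inequality, valid for $H^1(Q)$ functions with no boundary condition:
\[
\int_Q|f|^q\le C\int_Q|f|^r\Big(\int_Q|\nabla f|^2+\Big(\int_Q|f|^r\Big)^{2/r}\Big)^{\kappa}.
\]
I obtain it from $\|f\|_{L^{q}}^{q}\le C\|f\|_{L^r}^{r}\|f\|_{W^{1,2}}^{q-r}$ in 2D, where $q-r=2\kappa$. The lower-order part of the $W^{1,2}$ norm is replaced by $\|f\|_{L^r(Q)}$ via Poincar\'e and the mean. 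I rescale this to $Q_{k,j}$, multiply by $2^{k(N-2)}\cdot2^{k\kappa(N-2)}$, and return to $\R^N$.

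The scaling bookkeeping, which I will check carefully, gives
\[
\int_{A_{k,j}}|\phi|^{q}|x'|^{\kappa(N-2)}\le C\int_{A_{k,j}}|\phi|^{2^*}\Big(\int_{A_{k,j}}|\nabla\phi|^2+\Big(\int_{A_{k,j}}|\phi|^{2^*}\Big)^{2/2^*}\Big)^{\kappa}.
\]
For the Dirichlet term, the 2D energy is scale-invariant and the factor $2^{k(N-2)}$ converts it into the $N$-dimensional one. For the lower-order term, the powers of $2^k$ are $-N\cdot\frac{N-2}{N}+(N-2)=0$. This is exactly why I use the $L^{2^*}$ lower-order term and not an $L^2$ one. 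An $L^2$ term would produce $\int|x'|^{-2}\phi^2$, which requires a Hardy inequality in $\R^{N-1}$, and that fails for $N=3$.

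Finally I sum over $(k,j)$. The bracket on each cell is bounded by $\|\nabla\phi\|_2^2+\|\phi\|_{L^{2^*}(\R^N)}^2\le C\|\nabla\phi\|_2^2$, by the Sobolev inequality in $\R^N$. Hence
\[
\int_\Omega|\phi|^{q}|x'|^{\kappa(N-2)}\le C\|\nabla\phi\|_2^{2\kappa}\int_{\R^N}|\phi|^{2^*}\le C\|\nabla\phi\|_2^{2\kappa+2^*}.
\]
The constant depends only on $N$ and $\kappa$, and not on $\Omega$, since $\phi$ is extended by zero. The axis $\{s=0\}$ has measure zero and is covered by the infinitely many small cells with $k\to-\infty$, which causes no trouble because the sum is controlled cellwise.

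The main obstacle is the local Gagliardo--Nirenberg step in its precise form, with $\|f\|_{L^r}$ as the lower-order term and with no boundary condition, together with the uniformity of its constant after rescaling. I would prove it on the fixed square $Q$ by extension to $\R^2$ and the whole-plane inequality. Alternatively I would apply the Sobolev embedding $W^{1,1}(Q)\subset L^2(Q)$ to $|f|^{q/2}$ and then H\"older. The density of smooth invariant functions needed to justify these manipulations is routine.
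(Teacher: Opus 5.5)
Your proof is correct, and it takes a genuinely different route from the paper's.

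\emph{Your argument.} The bookkeeping checks out. On $Q_{k,j}$ the weight $s^{N-2}$ is comparable to $2^{k(N-2)}$, and the planar Dirichlet integral is dilation invariant. Since $\frac{2}{2^*}=\frac{N-2}{N}$, the powers of $2^k$ in the lower-order term cancel. So the cell inequality holds with a constant depending only on $N,\kappa$, and summing over cells plus the Sobolev inequality in $\R^N$ closes the argument for every real $\kappa\ge0$ at once. Two minor points:
\begin{itemize}
\item On the unit square, $\|f\|_{L^2(Q)}\le\|f\|_{L^{2^*}(Q)}$ follows from H\"older alone, since $|Q|=1$ and $2^*>2$; no Poincar\'e inequality is needed.
\item The cells do not cover the axis; they only accumulate at it. This is irrelevant, because the axis is a null set.
\end{itemize}

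\emph{The paper's argument.} The paper avoids any localisation. On the whole half-plane it proves the elementary weighted inequality $\int\!\!\int\Phi^2s^{2\beta}\le\big(\int\!\!\int|\nabla\Phi|s^{\beta}\big)^2$ by Gagliardo's product-of-line-integrals trick. Because $s^\beta$ is monotone, the weight can be carried by the line integral in $s$. Applying this to $|\Phi|^{\gamma}$ and using Cauchy--Schwarz gives the recursion $J_k\le\gamma_k^2DJ_{k-1}$, started from the Sobolev bound on $J_0$. Non-integer $\kappa$ then follows by H\"older interpolation between the exponents $2^*$ and $2^*+2k$. Your fallback (apply $W^{1,1}\subset L^2$ to $|f|^{q/2}$, then H\"older) is essentially this same mechanism, done cell by cell instead of globally.

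\emph{What each route buys.} The paper's route is self-contained, needs neither an extension operator nor a decomposition, and gives explicit constants. Yours relies on a black-box planar Gagliardo--Nirenberg inequality with an unspecified constant. In exchange, it treats all $\kappa$ uniformly and makes the mechanism transparent: away from the axis an invariant function is effectively two-dimensional, Whitney squares freeze the weight, and the scale-compatible $L^{2^*}$ lower-order term lets the global Sobolev inequality control every cell.
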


\begin{remark}\label{rem:scaling}
Inequality \eqref{eq:cyl} is invariant under the dilations $\phi_\lambda(x)=\lambda^{(N-2)/2}\phi(\lambda x)$. Indeed $\|\nabla\phi_\lambda\|_2=\|\nabla\phi\|_2$, and the change of variables $y=\lambda x$ gives
\[
\begin{aligned}
\int|\phi_\lambda|^{2^*+2\kappa}|x'|^{\kappa(N-2)}\dd x&=\lambda^{\frac{N-2}{2}(2^*+2\kappa)-N-\kappa(N-2)}\int|\phi|^{2^*+2\kappa}|y'|^{\kappa(N-2)}\dd y\\
&=\int|\phi|^{2^*+2\kappa}|y'|^{\kappa(N-2)}\dd y .
\end{aligned}
\]
The symmetry is essential when $\kappa>0$. If $\phi\in C^\infty_c(B_1)$ is radial and $x_0=e_1$, the functions $\lambda^{(N-2)/2}\phi(\lambda(x-x_0))$ have bounded Dirichlet integral, but the left-hand side of \eqref{eq:cyl} behaves like $\lambda^{\kappa(N-2)}\to\infty$. These functions are not $O(N-1)$-invariant. 
\end{remark}

The proof relies on the following elementary inequality in the half-plane.

\begin{lemma}\label{lem:GN}
Let $\Phi:[0,\infty)\times\R\to\R$ be Lipschitz with compact support and let $\beta\ge0$. Then
\begin{equation}\label{eq:GN}
\int_0^\infty\!\!\int_\R\Phi(s,t)^2\,s^{2\beta}\dd t\dd s\le\left(\int_0^\infty\!\!\int_\R|\nabla\Phi(s,t)|\,s^{\beta}\dd t\dd s\right)^2 .
\end{equation}
\end{lemma}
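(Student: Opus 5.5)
We prove \eqref{eq:GN} by the classical Gagliardo--Nirenberg argument in two variables, adapted to the weight $s^\beta$. The idea is to bound $|\Phi(s,t)|\,s^\beta$ by integrals of $|\nabla\Phi|\,s^\beta$ along two lines, one horizontal and one vertical, and then multiply the two bounds and integrate.

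\emph{Vertical direction.} Since $\Phi$ has compact support, for every $(s,t)$
\[
|\Phi(s,t)|\le\int_\R|\partial_t\Phi(s,\tau)|\dd\tau .
\]
Multiplying by $s^\beta$ gives
\[
|\Phi(s,t)|\,s^\beta\le\int_\R|\nabla\Phi(s,\tau)|\,s^\beta\dd\tau=:g(s).
\]

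\emph{Horizontal direction.} This is where the weight has to be absorbed, and it is the step I expect to be the only real point. Integrating from $s$ to $+\infty$ gives
\[
|\Phi(s,t)|\le\int_s^\infty|\partial_s\Phi(\sigma,t)|\dd\sigma .
\]
Since $\beta\ge0$ and $\sigma\ge s$, we have $s^\beta\le\sigma^\beta$. Hence
\[
|\Phi(s,t)|\,s^\beta\le\int_0^\infty|\nabla\Phi(\sigma,t)|\,\sigma^\beta\dd\sigma=:h(t).
\]
Integrating towards $+\infty$ rather than towards $s=0$ matters here: $\Phi$ need not vanish at $s=0$, and the monotonicity of $s^\beta$ in this direction is exactly what is needed.

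\emph{Conclusion.} Multiplying the two pointwise bounds gives
\[
\Phi(s,t)^2 s^{2\beta}\le g(s)\,h(t).
\]
Integrating over $(s,t)\in[0,\infty)\times\R$ and using Fubini,
\[
\int_0^\infty\!\!\int_\R\Phi^2 s^{2\beta}\dd t\dd s\le\Big(\int_0^\infty g(s)\dd s\Big)\Big(\int_\R h(t)\dd t\Big).
\]
Both factors on the right equal $\int_0^\infty\!\int_\R|\nabla\Phi|\,s^\beta\dd t\dd s$, which gives \eqref{eq:GN} with constant $1$.

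\emph{Technical points.} Lipschitz continuity with compact support makes $\Phi$ absolutely continuous on every horizontal and vertical line, with $\nabla\Phi$ bounded and defined a.e. By Fubini, for a.e. line the restriction's derivative coincides with the corresponding partial derivative of $\Phi$ a.e. on that line. So the two one-dimensional fundamental theorem of calculus estimates hold for a.e. $(s,t)$, which is enough for the integrated inequality.
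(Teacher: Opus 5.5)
Your proof is correct and is essentially the paper's argument. The paper also bounds $|\Phi(s,t)|$ by the integral of $|\nabla\Phi|$ along the full vertical line and along the horizontal half-line $[s,\infty)$, absorbs the weight via $s^\beta\le\tau^\beta$ for $\tau\ge s$, multiplies the two bounds into a product $a(t)\,b(s)$ and concludes with Fubini; your remark identifying the one-dimensional derivatives a.e.\ with the components of $\nabla\Phi$ is the same justification the paper gives.
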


\begin{proof}
For every $t$ the function $s\mapsto\Phi(s,t)$ is Lipschitz and vanishes for large $s$. For every $s$ the function $t\mapsto\Phi(s,t)$ is Lipschitz and vanishes for large $|t|$. Hence, for a.e.\ $(s,t)$,
\[
|\Phi(s,t)|\le\int_s^\infty|\nabla\Phi(\tau,t)|\dd\tau,\qquad |\Phi(s,t)|\le\int_t^\infty|\nabla\Phi(s,\varsigma)|\dd\varsigma\le\int_\R|\nabla\Phi(s,\varsigma)|\dd\varsigma .
\]
Here the partial derivatives along a.e.\ line coincide a.e.\ with the corresponding components of $\nabla\Phi$, by Fubini's theorem. Multiplying the two inequalities and using $s^\beta\le\tau^\beta$ for $\tau\ge s$, we get
\[
\Phi(s,t)^2s^{2\beta}\le\Big(\int_0^\infty|\nabla\Phi(\tau,t)|\tau^\beta\dd\tau\Big)\Big(s^\beta\int_\R|\nabla\Phi(s,\varsigma)|\dd\varsigma\Big)=:a(t)\,b(s).
\]
Integrating over $(s,t)\in(0,\infty)\times\R$ and using Fubini's theorem,
\[
\int_0^\infty\!\!\int_\R\Phi^2s^{2\beta}\dd t\dd s\le\Big(\int_\R a(t)\dd t\Big)\Big(\int_0^\infty b(s)\dd s\Big)=\Big(\int_0^\infty\!\!\int_\R|\nabla\Phi|s^\beta\dd t\dd s\Big)^2 . \qedhere
\]
\end{proof}

\begin{proof}[Proof of Proposition~\ref{prop:cyl}]
\emph{Step 1.}
It is quite standard that $O(N-1)$-invariant $C^\infty_{c}(\Omega)$ functions are dense in $O(N-1)$-invariant functions of $H^1_0(\Omega)$, see \cite{HV}.  It is then enough to prove \eqref{eq:cyl} for Lipschitz $O(N-1)$-invariant functions with compact support in $\Omega$. Indeed if \eqref{eq:cyl} holds for each $\phi_n$, and $\phi_n\to \phi$ in $H^1_0(\Omega)$ and a.e. in $\Omega$, Fatou's lemma gives
\[
\int|\phi|^{2^*+2\kappa}|x'|^{\kappa(N-2)}\le\liminf_n\int|\phi_n|^{2^*+2\kappa}|x'|^{\kappa(N-2)}\le C_\kappa\lim_n\|\nabla\phi_n\|_2^{2^*+2\kappa}=C_\kappa\|\nabla\phi\|_2^{2^*+2\kappa}.
\]

Next,  if $\phi$ is a Lipschitz function with compact support in $\Omega$ set $\Phi(s,t):=\phi(se_1+te_N)$, $(s,t)\in[0,\infty)\times\R$. Then $\Phi$ is Lipschitz with compact support and $\phi(x)=\Phi(|x'|,x_N)$. For $\gamma\ge1$ the function $|\Phi|^\gamma$ is Lipschitz with compact support and $|\nabla|\Phi|^\gamma|=\gamma|\Phi|^{\gamma-1}|\nabla\Phi|$ a.e. Let $\beta\ge(N-2)/2$. Applying \eqref{eq:GN} to $|\Phi|^\gamma$ and then the Cauchy--Schwarz inequality,
\begin{align}
\int_0^\infty\!\!\int_\R|\Phi|^{2\gamma}s^{2\beta}\dd t\dd s
&\le\gamma^2\Big(\int_0^\infty\!\!\int_\R\big(|\nabla\Phi|\,s^{\frac{N-2}2}\big)\big(|\Phi|^{\gamma-1}s^{\beta-\frac{N-2}2}\big)\dd t\dd s\Big)^2\notag\\
&\le\gamma^2\Big(\int_0^\infty\!\!\int_\R|\nabla\Phi|^2s^{N-2}\dd t\dd s\Big)\Big(\int_0^\infty\!\!\int_\R|\Phi|^{2\gamma-2}s^{2\beta-(N-2)}\dd t\dd s\Big).\label{eq:step}
\end{align}
For integers $k\ge0$ define, using polar coordinates in $\R^{N-1}$,
\[
J_k:=\int_0^\infty\!\!\int_\R|\Phi|^{2^*+2k}s^{(k+1)(N-2)}\dd t\dd s=\frac1{\omega_{N-2}}\int_{\R^N}|\phi|^{2^*+2k}|x'|^{k(N-2)}\dd x.
\]
Also,
\[
D:=\int_0^\infty\!\!\int_\R|\nabla\Phi|^2s^{N-2}\dd t\dd s=\frac1{\omega_{N-2}}\|\nabla\phi\|_2^2 .
\]
For $k\ge1$ choose
\[
\gamma_k:=\frac N{N-2}+k\ge1,\qquad \beta_k:=\frac{(k+1)(N-2)}2\ge\frac{N-2}2 .
\]
Then $2\gamma_k=2^*+2k$, $2\beta_k=(k+1)(N-2)$, $2\gamma_k-2=2^*+2(k-1)$ and $2\beta_k-(N-2)=k(N-2)$. So \eqref{eq:step} reads
\[
J_k\le\gamma_k^2\,D\,J_{k-1}\qquad(k\ge1).
\]
All the quantities are finite because $\Phi$ is bounded with compact support. By induction,
\[
J_k\le\Big(\prod_{j=1}^k\gamma_j^2\Big)D^kJ_0 .
\]
By the Sobolev inequality $\mathcal S_N\|\phi\|_{2^*}^2\le\|\nabla\phi\|_2^2$ we have $J_0=\omega_{N-2}^{-1}\|\phi\|_{2^*}^{2^*}\le\omega_{N-2}^{-1}\mathcal S_N^{-N/(N-2)}\|\nabla\phi\|_2^{2^*}$. Therefore
\begin{equation}\label{eq:Ck}
\int_{\R^N}|\phi|^{2^*+2k}|x'|^{k(N-2)}\dd x=\omega_{N-2}J_k\le\Big(\prod_{j=1}^k\gamma_j^2\Big)\omega_{N-2}^{-k}\mathcal S_N^{-\frac N{N-2}}\ \|\nabla\phi\|_2^{2^*+2k}.
\end{equation}
This is \eqref{eq:cyl} for integer $\kappa=k$.

\emph{Step 2.} Let $\kappa>0$ be arbitrary, let $k\ge\kappa$ be an integer, and let $\lambda:=\kappa/k\in(0,1]$. Since
\[
2^*+2\kappa=(1-\lambda)2^*+\lambda(2^*+2k)\qquad\text{and}\qquad\kappa(N-2)=\lambda k(N-2),
\]
H\"older's inequality with exponents $1/(1-\lambda)$ and $1/\lambda$ gives
\begin{multline*}
\int|\phi|^{2^*+2\kappa}|x'|^{\kappa(N-2)}=\int\big(|\phi|^{2^*}\big)^{1-\lambda}\big(|\phi|^{2^*+2k}|x'|^{k(N-2)}\big)^{\lambda}\\
\le\|\phi\|_{2^*}^{2^*(1-\lambda)}\Big(\int|\phi|^{2^*+2k}|x'|^{k(N-2)}\Big)^\lambda .
\end{multline*}
By the Sobolev inequality and \eqref{eq:Ck}, the right-hand side is bounded by a constant times $\|\nabla\phi\|_2^{2^*(1-\lambda)+(2^*+2k)\lambda}=\|\nabla\phi\|_2^{2^*+2\kappa}$.
\end{proof}

Inequality \eqref{eq:reflK} is crucial in deriving the weighted Sobolev estimates for functions in $\K$.
Let
\[
S:=\{x:\ |x_N|<|x'|\},\qquad P^\pm:=\{x:\ \pm x_N>|x'|\}.
\]
$S$ is the open double cone of latitude less than $\pi/4$ around the equator, and $P^\pm$ are the two polar cones. The set $\R^N\setminus(S\cup P^+\cup P^-)=\{|x_N|=|x'|\}$ has measure zero, and on $S$ we have
\begin{equation}\label{eq:Scomp}
|x|^2=|x'|^2+x_N^2<2|x'|^2 .
\end{equation}

\begin{lemma}\label{lem:transfer}
Let $\Omega$ be admissible, $u\in\K$, $q>0$, and let $w:[0,\infty)\to[0,\infty)$ be measurable. Then
\[
\int_{\Omega}u^q\,w(|x|)\dd x\le 3\int_{\Omega\cap S}u^q\,w(|x|)\dd x .
\]
\end{lemma}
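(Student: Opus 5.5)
Since $\R^N\setminus(S\cup P^+\cup P^-)$ is a null set, I will split
\[
\int_\Omega u^q w(|x|)\dd x=\int_{\Omega\cap S}+\int_{\Omega\cap P^+}+\int_{\Omega\cap P^-}.
\]
By the $O(1)$-invariance of $u$, $w(|x|)$ and $\Omega$ under $B$, the two polar integrals coincide. The claim therefore reduces to
\[
\int_{\Omega\cap P^+}u^q w(|x|)\dd x\le\int_{\Omega\cap S}u^q w(|x|)\dd x .
\]
This gives the constant $1+2=3$.

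To prove this I will use a single reflection with $\varphi=\pi/4$. Here $\nu=(-e_1+e_N)/\sqrt2$ and $\tau=(e_1+e_N)/\sqrt2$, and $\sigma=\sigma_{\pi/4}$ swaps the coordinates $x_1$ and $x_N$. One reflection does not map all of $P^+$ into $S$. Instead I will first use $O(N-1)$-invariance to reduce to a set where it does. Passing to the coordinates $(x_1,x'')$ with $x''=(x_2,\dots,x_{N-1})$, the cleanest route is to compare, in $\K$, the regions
\[
P^+_1:=\{x_N>|x'|\}\quad\text{and}\quad \sigma(P^+_1)=\{x_1>\sqrt{x_N^2+|x''|^2}\}.
\]
On $P^+_1$ we have $x\cdot\nu=(x_N-x_1)/\sqrt2>0$ and $x\cdot\tau=(x_N+x_1)/\sqrt2>0$, since $x_N>|x_1|$. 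Hence $P^+_1\subset W_{\pi/4}$. The map $\sigma$ preserves Lebesgue measure and $|x|$, and sends $\Omega\cap P^+_1$ into $\Omega$ by Corollary~\ref{cor:reflect}. Moreover $u(\sigma x)\ge u(x)$ a.e. there by \eqref{eq:reflK}. The change of variables $y=\sigma x$ then gives
\[
\int_{\Omega\cap P^+}u^q w=\int_{\Omega\cap P^+}u^q w
\le\int_{\Omega\cap P^+}u(\sigma x)^q w(|x|)\dd x=\int_{\Omega\cap\sigma(P^+)}u^q w .
\]
The set $\sigma(P^+)=\{x_1>\sqrt{x_N^2+|x''|^2}\}$ satisfies $|x'|\ge x_1>|x_N|$, so $\sigma(P^+)\subset S$. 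This finishes the argument with constant $3$. No averaging over $O(N-1)$ is actually needed: $P^+$ itself lies in $W_{\pi/4}$, because $x_N>|x'|\ge|x_1|$ already gives both sign conditions.

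The only delicate point is measure-theoretic. The inequality \eqref{eq:reflK} holds only a.e. on $W_\varphi$, but that is enough, since $\sigma$ is measure preserving. Points of $P^+\setminus\Omega$ contribute zero because $u$ is extended by $0$, so the inclusion $\sigma(\Omega\cap P^+)\subset\Omega\cap S$ from Corollary~\ref{cor:reflect} is only needed to ensure the right-hand integral sits in $\Omega\cap S$. The main obstacle I anticipate is merely verifying the geometric inclusions $P^+\subset W_{\pi/4}$ and $\sigma(P^+)\subset S$. Both are immediate from the explicit form of $\sigma_{\pi/4}$ as the swap $x_1\leftrightarrow x_N$.
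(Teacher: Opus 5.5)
Your proof is correct and is essentially the paper's: you split into $S$, $P^+$, $P^-$, handle $P^-$ by $B$-invariance, and apply \eqref{eq:reflK} with $\varphi=\pi/4$ on $P^+\subset W_{\pi/4}$ together with $\sigma_{\pi/4}(P^+)\subset S$; as you yourself conclude, the initial detour through $O(N-1)$-invariance is unnecessary. One small slip: the change of variables gives $\int_{\Omega\cap P^+}u(\sigma x)^q w(|x|)\dd x=\int_{\sigma(\Omega\cap P^+)}u^q w\le\int_{\Omega\cap\sigma(P^+)}u^q w$, and this is an inequality rather than the equality you wrote, since $\Omega$ need not be $\sigma_{\pi/4}$-invariant (the paper avoids the issue by integrating over all of $P^+$ with $u$ extended by zero).
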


\begin{proof}
Take $\varphi=\pi/4$. Then $\nu_{\pi/4}=(e_N-e_1)/\sqrt2$ and $\tau_{\pi/4}=(e_1+e_N)/\sqrt2$, so $\sigma_{\pi/4}$ exchanges the coordinates $x_1$ and $x_N$, and
\[
W_{\pi/4}=\{x_N-x_1>0,\ x_N+x_1>0\}=\{x_N>|x_1|\}.
\]
Since $|x'|\ge|x_1|$, we have $P^+\subset W_{\pi/4}$. Moreover
\[
\sigma_{\pi/4}(P^+)=\{y:\ y_1>|(y_2,\dots,y_{N-1},y_N)|\}.
\]
For $y$ in this set, $|y'|\ge y_1>|y_N|$, so $\sigma_{\pi/4}(P^+)\subset S$. Using \eqref{eq:reflK} on $P^+\subset W_{\pi/4}$, the non-negativity of $u$, $|\sigma_{\pi/4}x|=|x|$ and the change of variables $y=\sigma_{\pi/4}x$, we obtain
\[
\int_{P^+}u(x)^q w(|x|)\dd x\le\int_{P^+}u(\sigma_{\pi/4}x)^qw(|x|)\dd x=\int_{\sigma_{\pi/4}(P^+)}u(y)^qw(|y|)\dd y\le\int_Su^qw(|y|)\dd y .
\]
Since $u\circ B=u$ and $B(P^+)=P^-$, the same bound holds on $P^-$. Adding the contributions of $S$, $P^+$ and $P^-$, and recalling that $u=0$ outside $\Omega$, gives the claim.
\end{proof}

\begin{remark}
The inequalities  \eqref{eq:reflK} cannot be dropped. Let $\phi\in C^\infty_c(B_1)$ be radial and non-negative, let $0<\rho<\sup_{x\in\Omega}|x_N|$ with $\pm\rho e_N\in\Omega$, and set
\[
v_\lambda(x):=\lambda^{\frac{N-2}2}\big(\phi(\lambda(x-\rho e_N))+\phi(\lambda(x+\rho e_N))\big).
\]
For $\lambda$ large, $v_\lambda\in H^1_{0,\G}(\Omega)$ and $\|\nabla v_\lambda\|_2$ is bounded. However $\int|x|^\alpha v_\lambda^q\dd x\approx2\rho^\alpha\lambda^{\frac{N-2}2q-N}\|\phi\|_q^q\to\infty$ for every $q>2^*$. So $H^1_{0,\G}(\Omega)$ does not embed into $L^q(\Omega,|x|^\alpha\dd x)$ for $q>2^*$: axial symmetry allows concentration at points of the axis. The functions $v_\lambda$ violate \eqref{eq:reflK} with $\varphi=\pi/4$, because their mass sits in the polar caps.
\end{remark}

The following proposition is central to our approach. By exploiting all the properties of functions $u\in\K$, we can extend Ni's inequality, which is valid only in the radially symmetric case \cite{Ni}.

\begin{proposition}\label{prop:embK}
Let $\Omega$ be admissible and $\kappa\ge0$. For every $u\in\K$,
\begin{equation}\label{eq:embK}
\int_\Omega u^{2^*+2\kappa}\,|x|^{\kappa(N-2)}\dd x\le 3\cdot2^{\frac{\kappa(N-2)}2}C_\kappa\,\|\nabla u\|_2^{2^*+2\kappa},
\end{equation}
where $C_\kappa$ is the constant of Proposition~\ref{prop:cyl}.
\end{proposition}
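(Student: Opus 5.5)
The plan is to combine Lemma~\ref{lem:transfer} with Proposition~\ref{prop:cyl}, using the comparison \eqref{eq:Scomp} between $|x|$ and $|x'|$ on the equatorial double cone $S$.

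\emph{Step 1 (reduction to $S$).} We apply Lemma~\ref{lem:transfer} with $q=2^*+2\kappa$ and the measurable weight $w(r)=r^{\kappa(N-2)}$. Since $u\in\K$, this gives
\[
\int_\Omega u^{2^*+2\kappa}|x|^{\kappa(N-2)}\dd x\le 3\int_{\Omega\cap S}u^{2^*+2\kappa}|x|^{\kappa(N-2)}\dd x .
\]
This is the only place where the reflection inequalities \eqref{eq:reflK} enter. They move the mass from the polar cones $P^\pm$, where $|x'|$ can be much smaller than $|x|$, into $S$.

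\emph{Step 2 (weight comparison on $S$).} On $S$, \eqref{eq:Scomp} gives $|x|^2<2|x'|^2$. Since $\kappa(N-2)\ge0$, raising to the power $\kappa(N-2)/2$ yields
\[
|x|^{\kappa(N-2)}\le 2^{\frac{\kappa(N-2)}2}|x'|^{\kappa(N-2)}\qquad\text{on } S .
\]
Hence
\[
\int_{\Omega\cap S}u^{2^*+2\kappa}|x|^{\kappa(N-2)}\dd x\le 2^{\frac{\kappa(N-2)}2}\int_\Omega u^{2^*+2\kappa}|x'|^{\kappa(N-2)}\dd x .
\]
To extend the integral from $\Omega\cap S$ to $\Omega$ we only use $u\ge0$.

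\emph{Step 3 (cylindrical inequality).} Every $u\in\K$ is $\G$-invariant, so in particular $O(N-1)$-invariant. An admissible $\Omega$ is a bounded $O(N-1)$-invariant open set. Proposition~\ref{prop:cyl} therefore applies to $\phi=u$ and bounds the last integral by $C_\kappa\|\nabla u\|_2^{2^*+2\kappa}$. Chaining Steps~1--3 gives \eqref{eq:embK} with exactly the constant $3\cdot2^{\kappa(N-2)/2}C_\kappa$.

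No step is a genuine obstacle, since all the heavy lifting is done in Proposition~\ref{prop:cyl} and Lemma~\ref{lem:transfer}. The only points to check are technical. First, Lemma~\ref{lem:transfer} allows an arbitrary non-negative measurable radial weight, so the unbounded-growth weight $r^{\kappa(N-2)}$ causes no trouble. Second, for $\kappa=0$ the statement reduces to three times the Sobolev inequality.
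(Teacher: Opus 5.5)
Your proposal is correct and follows the paper's proof: Lemma~\ref{lem:transfer} with $q=2^*+2\kappa$ and $w(r)=r^{\kappa(N-2)}$, then the bound $|x|^2<2|x'|^2$ on $S$, then Proposition~\ref{prop:cyl}, which gives exactly the constant $3\cdot2^{\kappa(N-2)/2}C_\kappa$. You also spell out the small checks the paper leaves implicit, namely $u\ge0$ when enlarging the domain of integration and the $O(N-1)$-invariance of $u$ and $\Omega$ needed to apply Proposition~\ref{prop:cyl}.
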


\begin{proof}
Apply Lemma~\ref{lem:transfer} with $q=2^*+2\kappa$ and $w(r)=r^{\kappa(N-2)}$. Then use \eqref{eq:Scomp} on $S$ and Proposition~\ref{prop:cyl}:
\[
\int_\Omega u^{2^*+2\kappa}|x|^{\kappa(N-2)}\le3\int_{S}u^{2^*+2\kappa}\,2^{\frac{\kappa(N-2)}2}|x'|^{\kappa(N-2)}\le3\cdot2^{\frac{\kappa(N-2)}2}C_\kappa\|\nabla u\|_2^{2^*+2\kappa}. \qedhere
\]
\end{proof}
We are now ready to prove Theorem \ref{cor:emb}.
\begin{proof}[Proof of Theorem \ref{cor:emb}]
If $2^*\le q\le q_\alpha$, let $\kappa:=(q-2^*)/2$ and $\alpha':=\kappa(N-2)=\frac{N-2}2q-N$. Then $0\le\alpha'\le\alpha$, because $q\le q_\alpha$ is equivalent to $\frac{N-2}2q-N\le\alpha$.  We let $R:=\sup_{x\in\Omega}|x|$.
Since $|x|^\alpha\le R^{\alpha-\alpha'}|x|^{\alpha'}$ in $\Omega$, the claim follows from \eqref{eq:embK}. If $1\le q<2^*$, H\"older's and Sobolev's inequalities give $\int_\Omega|x|^\alpha u^q\le R^\alpha|\Omega|^{1-q/2^*}\|u\|_{2^*}^q\le C\|\nabla u\|_2^q$.
\end{proof}

\begin{proposition}\label{prop:compact}
Let $\Omega$ be admissible, $\alpha>0$, $1\le q<q_\alpha$. If $(u_j)\subset\K$ and $u_j\rightharpoonup u$ weakly in $H^1_0(\Omega)$, then $u\in\K$ and
\[
\int_\Omega|x|^\alpha|u_j-u|^q\dd x\to0 .
\]
\end{proposition}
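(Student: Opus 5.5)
The proof splits into two parts: showing the weak limit stays in $\K$, and showing strong convergence in the weighted norm. For the first part, Lemma~\ref{lem:Kprop}(a) says $\K$ is convex and closed in $H^1_0(\Omega)$, hence weakly closed, so $u\in\K$. The same conclusion can be reached directly: by Rellich $u_j\to u$ in $L^2(\Omega)$ and, along a subsequence, a.e. The pointwise inequalities \eqref{eq:reflK} then pass to the limit, since $\sigma_\varphi$ and $\G$ preserve Lebesgue measure. For the convergence, it suffices to show every subsequence has a further subsequence along which the integral tends to $0$. So I fix a subsequence with $u_j\to u$ a.e. and $\sup_j\|\nabla u_j\|_2\le M$.

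I would run a Vitali-type argument built on Theorem~\ref{cor:emb}. Since $q<q_\alpha$, I pick $\tilde q\in(q,q_\alpha]$, for instance $\tilde q=q_\alpha$. The differences $u_j-u$ are not in $\K$, because the cone is not a linear space. But $|u_j-u|^q\le 2^{q-1}(u_j^q+u^q)$, and $u_j,u\in\K$, so Theorem~\ref{cor:emb} applies to $u_j$ and $u$ separately:
\[
\int_\Omega|x|^\alpha u_j^{\tilde q}\dd x\le C M^{\tilde q},\qquad \int_\Omega|x|^\alpha u^{\tilde q}\dd x\le C M^{\tilde q}.
\]
Hence $|u_j-u|^q$ is bounded in $L^{\tilde q/q}(\Omega,|x|^\alpha\dd x)$ with exponent $\tilde q/q>1$. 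The measure $|x|^\alpha\dd x$ on the bounded set $\Omega$ is finite, so this family is uniformly integrable. Concretely, for any measurable $E\subset\Omega$, H\"older gives
\[
\int_E|x|^\alpha|u_j-u|^q\dd x\le \Big(\int_\Omega|x|^\alpha|u_j-u|^{\tilde q}\dd x\Big)^{q/\tilde q}\Big(\int_E|x|^\alpha \dd x\Big)^{1-q/\tilde q}\le C' M^q\,|E|_\alpha^{1-q/\tilde q}.
\]

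Egorov's theorem, applied on the finite measure space $(\Omega,|x|^\alpha\dd x)$, now finishes the argument. Given $\eps>0$, I choose $E$ with $|E|_\alpha$ small such that $u_j\to u$ uniformly on $\Omega\setminus E$. The integral over $E$ is then below $\eps$ for all $j$ by the estimate above. The integral over $\Omega\setminus E$ tends to $0$ by uniform convergence and finiteness of the measure.

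The main obstacle is exactly that $u_j-u\notin\K$, so the embedding of Theorem~\ref{cor:emb} cannot be applied to the difference directly. The splitting $|u_j-u|^q\le 2^{q-1}(u_j^q+u^q)$ bypasses this. The strict inequality $q<q_\alpha$ is what provides the gap $\tilde q>q$ needed for uniform integrability. At $q=q_\alpha$ the argument breaks down, consistent with the scaling invariance in Remark~\ref{rem:scaling}.
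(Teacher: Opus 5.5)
Your proof is correct, but it takes a different route from the paper's.

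\textbf{What the paper does.} The paper does not argue by equi-integrability. It splits $\Omega$ into $B_\delta$ and $\Omega\setminus B_\delta$.
\begin{itemize}
\item Near the origin it applies \eqref{eq:embK} with the \emph{smaller} weight $|x|^{\alpha'}$, where $\alpha'=\frac{N-2}{2}q-N<\alpha$. The leftover factor $\delta^{\alpha-\alpha'}$ then gives a bound $\eta(\delta)\to0$, uniform over bounded subsets of $\K$.
\item Away from the origin it applies \eqref{eq:embK} with a large $\kappa'$. There the weight is bounded below by $\delta^{\kappa'(N-2)}$, which gives uniform $L^{q'}(\Omega\setminus B_\delta)$ bounds. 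These are interpolated with the Rellich convergence $u_j\to u$ in $L^2$.
\end{itemize}

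\textbf{What you do.} You apply Theorem~\ref{cor:emb} once, at an exponent $\tilde q\in(q,q_\alpha]$, to get a uniform bound in $L^{\tilde q}(\Omega,|x|^\alpha\dd x)$. You then conclude by Vitali's theorem, i.e.\ Egorov plus equi-integrability on the finite measure space $(\Omega,|x|^\alpha\dd x)$. One cosmetic point: the pointwise bound you actually need is $|u_j-u|^{\tilde q}\le2^{\tilde q-1}(u_j^{\tilde q}+u^{\tilde q})$, or simply Minkowski's inequality, rather than the version with exponent $q$.

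\textbf{What each route buys.} Your argument is shorter and purely measure-theoretic once the embedding is available. It needs the subsequence principle, which you invoke correctly. The paper avoids that step because Rellich already gives $L^2$ convergence of the whole sequence.

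The paper's decomposition yields more structural information. With $q'$ arbitrary, the same argument shows that bounded sequences in $\K$ converge strongly in $L^{s}(\Omega\setminus B_\delta)$ for every $s<\infty$. So the only possible loss of compactness inside $\K$ is concentration at the origin, and the restriction $q<q_\alpha$ is used only in $B_\delta$. Your version hides this localization inside the single global embedding.
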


\begin{proof}
We have $u\in\K$ by Lemma~\ref{lem:Kprop}(a). Let $M:=\sup_j\|\nabla u_j\|_2+\|\nabla u\|_2<\infty$. By Rellich's theorem, $u_j\to u$ in $L^2(\Omega)$. It suffices to consider $q\ge2$; the case $1\le q<2$ follows from the case $q=2$ by H\"older's inequality. Fix $\delta\in(0,1)$.

\emph{Near the origin.} We claim that there is $\eta(\delta)\to0$ as $\delta\to0$ such that
\begin{equation}\label{eq:near0}
\int_{B_\delta}|x|^\alpha v^q\dd x\le\eta(\delta)\qquad\text{for every }v\in\K\text{ with }\|\nabla v\|_2\le M.
\end{equation}
If $q\le2^*$, H\"older's and Sobolev's inequalities give $\int_{B_\delta}|x|^\alpha v^q\le\delta^\alpha|B_\delta|^{1-q/2^*}\mathcal S_N^{-q/2}M^q$. If $2^*<q<q_\alpha$, let $\kappa:=(q-2^*)/2$ and $\alpha':=\kappa(N-2)\in(0,\alpha)$. By \eqref{eq:embK},
\[
\int_{B_\delta}|x|^\alpha v^q\le\delta^{\alpha-\alpha'}\int_\Omega|x|^{\alpha'}v^q\le\delta^{\alpha-\alpha'}\,3\cdot2^{\alpha'/2}C_\kappa M^q .
\]
In both cases \eqref{eq:near0} holds. Consequently
\[
\int_{B_\delta}|x|^\alpha|u_j-u|^q\le2^{q-1}\Big(\int_{B_\delta}|x|^\alpha u_j^q+\int_{B_\delta}|x|^\alpha u^q\Big)\le2^q\eta(\delta).
\]

\emph{Away from the origin.} Fix $q'>q$ with $q'\ge2^*$ and write $q'=2^*+2\kappa'$. For $v\in\K$ with $\|\nabla v\|_2\le M$, \eqref{eq:embK} gives
\[
\int_{\Omega\setminus B_\delta}v^{q'}\le\delta^{-\kappa'(N-2)}\int_\Omega v^{q'}|x|^{\kappa'(N-2)}\le\delta^{-\kappa'(N-2)}\,3\cdot2^{\frac{\kappa'(N-2)}2}C_{\kappa'}M^{q'}=:L(\delta)^{q'} .
\]
So $u_j-u$ is bounded in $L^{q'}(\Omega\setminus B_\delta)$ by $2L(\delta)$. Let $\lambda\in(0,1]$ be defined by $\frac1q=\frac\lambda2+\frac{1-\lambda}{q'}$. The interpolation inequality gives
\[
\|u_j-u\|_{L^q(\Omega\setminus B_\delta)}\le\|u_j-u\|_{L^2(\Omega)}^\lambda\big(2L(\delta)\big)^{1-\lambda}\to0 .
\]

\emph{Conclusion.} Since $|x|^\alpha\le R^\alpha$ on $\Omega$,
\[
\limsup_{j\to\infty}\int_\Omega|x|^\alpha|u_j-u|^q\le2^q\eta(\delta)+R^\alpha\limsup_{j\to\infty}\|u_j-u\|^q_{L^q(\Omega\setminus B_\delta)}=2^q\eta(\delta).
\]
Letting $\delta\to0$ concludes the proof.
\end{proof}

The final lemma of this section essentially follows from Hölder's inequality.
\begin{lemma}\label{lem:dual}
Let $\Omega$ be admissible, $\alpha>0$, $1\le p<p_\alpha$ and $r:=\frac{2N}{N+2}$. There is $C=C(N,\alpha,p,\Omega)$ such that
\[
\big\||x|^\alpha u^p\big\|_{L^r(\Omega)}\le C\|\nabla u\|_2^p\qquad\text{for every }u\in\K .
\]
In particular $|x|^\alpha u^p\in L^r(\Omega)\subset H^{-1}(\Omega)$, and $\big|\int_\Omega|x|^\alpha u^p\psi\big|\le C\mathcal S_N^{-1/2}\|\nabla u\|_2^p\|\nabla\psi\|_2$ for every $\psi\in H^1_0(\Omega)$.
\end{lemma}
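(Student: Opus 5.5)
The plan is to reduce the $L^r$ bound to Theorem~\ref{cor:emb} applied with the exponent $q:=pr$. Since $r=\frac{2N}{N+2}$, we have
\[
\big\||x|^\alpha u^p\big\|_{L^r(\Omega)}^r=\int_\Omega|x|^{\alpha r}u^{pr}\dd x .
\]
The weight $|x|^{\alpha r}$ differs from the weight $|x|^\alpha$ of Theorem~\ref{cor:emb}, so the first step is to write $|x|^{\alpha r}=|x|^{\alpha}\,|x|^{\alpha(r-1)}$. Because $r\ge1$ and $|x|\le R:=\sup_{x\in\Omega}|x|$ on $\Omega$, this gives $|x|^{\alpha r}\le R^{\alpha(r-1)}|x|^\alpha$. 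Therefore
\[
\int_\Omega|x|^{\alpha r}u^{pr}\le R^{\alpha(r-1)}\int_\Omega |x|^\alpha u^{pr}.
\]

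The second step is to check that the exponent $q=pr$ lies in the admissible range $1\le q\le q_\alpha$ of Theorem~\ref{cor:emb}. The lower bound holds because $p\ge1$ and $r\ge1$. For the upper bound, $p<p_\alpha$ gives
\[
pr<\frac{N+2+2\alpha}{N-2}\cdot\frac{2N}{N+2}.
\]
It then suffices to show that $\frac{2N(N+2+2\alpha)}{(N-2)(N+2)}\le\frac{2(N+\alpha)}{N-2}$, which is equivalent to $N(N+2+2\alpha)\le(N+\alpha)(N+2)$, that is, $2N\alpha\le N\alpha+2\alpha$. This fails for $N\ge3$, so the naive route does not work as stated.

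The fix is to use the weight more efficiently: keep the full power $|x|^{\alpha r}$ and use Proposition~\ref{prop:embK}, which permits a weight $|x|^{\kappa(N-2)}$ as large as the exponent demands. Set $q=pr$. If $q\le 2^*$, Hölder's and Sobolev's inequalities suffice as in the proof of Theorem~\ref{cor:emb}. If $q>2^*$, put $\kappa=(q-2^*)/2$, and the needed condition is $\kappa(N-2)\le\alpha r$, that is,
\[
\frac{N-2}{2}pr-N\le\alpha r .
\]
Dividing by $r$ and using $N/r=\frac{N+2}{2}$, this becomes $\frac{N-2}{2}p\le\alpha+\frac{N+2}{2}$, i.e.\ $p\le p_\alpha$, which holds. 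In fact the inequality is strict, so $\alpha r-\kappa(N-2)>0$. Then $|x|^{\alpha r}\le R^{\alpha r-\kappa(N-2)}|x|^{\kappa(N-2)}$, and Proposition~\ref{prop:embK} gives
\[
\int_\Omega|x|^{\alpha r}u^{pr}\le C\|\nabla u\|_2^{pr}.
\]
Taking the $r$-th root yields the stated bound. I expect this exponent bookkeeping to be the only real point, and it is exactly where one must use Proposition~\ref{prop:embK} with a weight adapted to $\alpha r$ rather than Theorem~\ref{cor:emb} with weight $|x|^\alpha$.

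Finally, the duality statement. By Sobolev's inequality, every $\psi\in H^1_0(\Omega)$ satisfies $\|\psi\|_{2^*}\le \mathcal S_N^{-1/2}\|\nabla\psi\|_2$. The exponents $r$ and $2^*$ are conjugate, so Hölder's inequality gives
\[
\Big|\int_\Omega|x|^\alpha u^p\psi\Big|\le \big\||x|^\alpha u^p\big\|_r\|\psi\|_{2^*}\le C\mathcal S_N^{-1/2}\|\nabla u\|_2^p\|\nabla\psi\|_2 .
\]
In particular $|x|^\alpha u^p\in L^r(\Omega)\subset H^{-1}(\Omega)$.
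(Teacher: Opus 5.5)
Your argument is correct, but it takes a different and shorter route than the paper.

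\emph{The paper's route.} It takes an auxiliary $\kappa$ with $Q:=2^*+2\kappa>pr$ and writes $|x|^{\alpha r}u^{pr}=(u^{pr}|x|^{b})\,|x|^{\alpha r-b}$ with $b=\kappa(N-2)pr/Q$. It applies H\"older with exponents $Q/(pr)$ and $Q/(Q-pr)$ and bounds the first factor by Proposition~\ref{prop:embK}. What remains is $\int_{B_R}|x|^{e_\kappa}\dd x$, where $e_\kappa$ may be negative, and the strict inequality $p<p_\alpha$ is exactly what gives $e_\kappa>-N$.

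\emph{Your route.} You choose $\kappa=(pr-2^*)/2$, so that the power of $u$ matches exactly. You then note that the weight $|x|^{\kappa(N-2)}$ required by Proposition~\ref{prop:embK} is dominated by $|x|^{\alpha r}$ on the bounded set $\Omega$ as soon as $p\le p_\alpha$. In other words, the lemma is Theorem~\ref{cor:emb} with $\alpha$ replaced by $\alpha r$ and $q=pr$, since $pr\le\frac{2(N+\alpha r)}{N-2}$ is equivalent to $p\le p_\alpha$.

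\emph{Comparison.} Your approach avoids the computation of $e_\kappa$ entirely. It also covers the endpoint $p=p_\alpha$, where the paper's exponent equals $-N$ for every $\kappa$ and its argument breaks down. The paper's approach only buys a uniform treatment without your case distinction $pr\le 2^*$ versus $pr>2^*$.

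In a final write-up, drop the first attempt via Theorem~\ref{cor:emb} with the weight $|x|^\alpha$. As you observed, it cannot reach the whole range when $N\ge3$.
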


\begin{proof}
Let $\kappa>0$ be a parameter, set $Q:=2^*+2\kappa$, and assume $\kappa$ is so large that $Q>pr$. Let $b:=\kappa(N-2)pr/Q$. H\"older's inequality with exponents $Q/(pr)$ and $Q/(Q-pr)$ gives
\[
\int_\Omega|x|^{\alpha r}u^{pr}=\int_\Omega\big(u^{pr}|x|^{b}\big)|x|^{\alpha r-b}\le\Big(\int_\Omega u^Q|x|^{\kappa(N-2)}\Big)^{\frac{pr}Q}\Big(\int_\Omega|x|^{e_\kappa}\Big)^{\frac{Q-pr}Q},
\]
where
\[
e_\kappa:=\frac{(\alpha r-b)Q}{Q-pr}=\frac{\alpha r\,2^*+\kappa\big(2\alpha r-(N-2)pr\big)}{2^*-pr+2\kappa}\ \xrightarrow[\kappa\to\infty]{}\ r\Big(\alpha-\frac{(N-2)p}2\Big).
\]
Now
\[
r\Big(\alpha-\frac{(N-2)p}2\Big)>-N\iff\frac{(N-2)p}2-\alpha<\frac Nr=\frac{N+2}2\iff p<\frac{N+2+2\alpha}{N-2}=p_\alpha .
\]
Hence, for $\kappa$ large enough, $e_\kappa>-N$ and $\int_\Omega|x|^{e_\kappa}\dd x\le\int_{B_R}|x|^{e_\kappa}\dd x<\infty$. The first factor is bounded by $\big(3\cdot2^{\kappa(N-2)/2}C_\kappa\|\nabla u\|_2^Q\big)^{pr/Q}$ by \eqref{eq:embK}. Thus $\int_\Omega|x|^{\alpha r}u^{pr}\le C\|\nabla u\|_2^{pr}$. The last assertion follows from H\"older's inequality ($1/r+1/2^*=1$) and the Sobolev inequality.
\end{proof}
%=====================================================================
\section{Invariance of \texorpdfstring{$\K$}{K} under the solution operator}\label{sec:inv}
%=====================================================================

For $\varphi\in(0,\pi/2)$ and a measurable function $v:\R^N\to\R$, we define the \emph{polarization}
\[
P_\varphi v(x):=
\begin{cases}
\min\{v(x),v(\sigma_\varphi x)\}&x\in Q^+_\varphi,\\[2pt]
\max\{v(x),v(\sigma_\varphi x)\}&x\in Q^-_\varphi,\\[2pt]
v(x)&x\in Z_\varphi .
\end{cases}
\]
Since $\sigma_\varphi(Q^+_\varphi)=Q^-_\varphi$, for $x\in Q^+_\varphi$ we have
\begin{equation}\label{eq:pairs}
P_\varphi v(x)=\min\{v(x),v(\sigma_\varphi x)\},\qquad P_\varphi v(\sigma_\varphi x)=\max\{v(x),v(\sigma_\varphi x)\}.
\end{equation}
So $P_\varphi$ rearranges the pair of values at $x$ and $\sigma_\varphi x$ so that the larger value sits in $Q^-_\varphi$, i.e.\ at the point closer to the equator when $x\in W_\varphi$. This is a two-point rearrangement in the sense of \cite{BrSo,VS}, adapted to the four quadrants $W_\varphi$, $\sigma_\varphi W_\varphi$, $CW_\varphi$, $\sigma_\varphi CW_\varphi$ in order to be compatible with the $\G$-symmetry. See Figure~\ref{fig:polarization-a} and \ref{fig:polarization-b}.

%\begin{figure}[ht]
%\centering
%\includegraphics[width=0.8\textwidth]{fig_polarization.pdf}
%\caption{Polarization in the plane $\Pi=\mathrm{span}\{e_1,e_N\}$ ($x'$ horizontal, $x_N$ vertical), for $\varphi=\pi/6$ and $\Omega$ an oblate ellipsoid.\protect\newline
%(a) The lines $\R\tau_\varphi$ and $\R\nu_\varphi$ and the four quadrants: $Q^+_\varphi=W_\varphi\cup CW_\varphi$ (blue) and $Q^-_\varphi=\sigma_\varphi W_\varphi\cup\sigma_\varphi CW_\varphi$ (orange).
%\protect\newline
%(b) The set $\Omega\cap Q^+_\varphi$ (blue) and its image under $\sigma_\varphi$ (orange contour {\Ma il contorno si confonde con il nero  di $\R\tau_\varphi$ and $\R\nu_\varphi$, meglio fare quei pezzo solo in orange}), which is contained in $\Omega$ by Corollary~\ref{cor:reflect}; a point $x\in W_\varphi$ is sent to $\sigma_\varphi x$, closer to the equator.
%\protect\newline
%(c) A non-negative $\mathcal G$-invariant function $v$ concentrated near the poles.
%\protect\newline
%(d) Its polarization $P_\varphi v$: on each pair $\{x,\sigma_\varphi x\}$ with $x\in Q^+_\varphi$ the larger value is moved to $\sigma_\varphi x\in Q^-_\varphi$, so mass is pushed towards the equator, while $\|\nabla P_\varphi v\|_2\le\|\nabla v\|_2$ (Lemma~\ref{lem:polariz}).}
%\label{fig:polarization}
%\end{figure}

\begin{figure}[ht]
\centering
\includegraphics[width=0.8\textwidth]{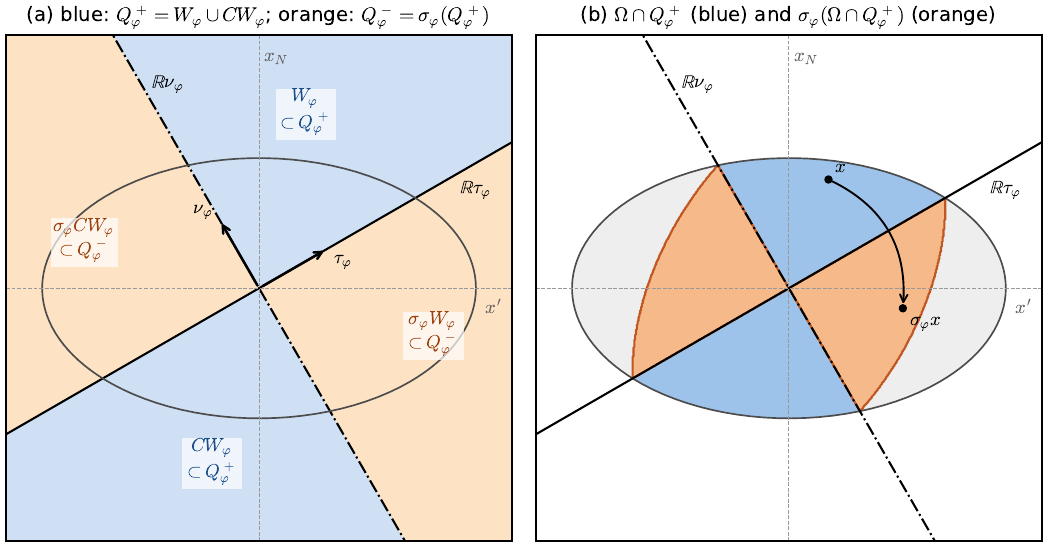}
\caption{The geometry of the polarization in the plane $\Pi=\mathrm{span}\{e_1,e_N\}$ ($x'$ horizontal, $x_N$ vertical), for $\varphi=\pi/6$ and $\Omega$ an oblate ellipsoid.
(a) The lines $\R\tau_\varphi$ and $\R\nu_\varphi$ and the four quadrants: $Q^+_\varphi=W_\varphi\cup CW_\varphi$ (blue) and $Q^-_\varphi=\sigma_\varphi W_\varphi\cup\sigma_\varphi CW_\varphi$ (orange).
(b) The set $\Omega\cap Q^+_\varphi$ (blue) and its image $\sigma_\varphi(\Omega\cap Q^+_\varphi)$ (orange), which is contained in $\Omega$ by Corollary~\ref{cor:reflect}; a point $x\in W_\varphi$ is sent to $\sigma_\varphi x$, closer to the equator.}
\label{fig:polarization-a}
\end{figure}

\begin{figure}[ht]
\centering
\includegraphics[width=0.8\textwidth]{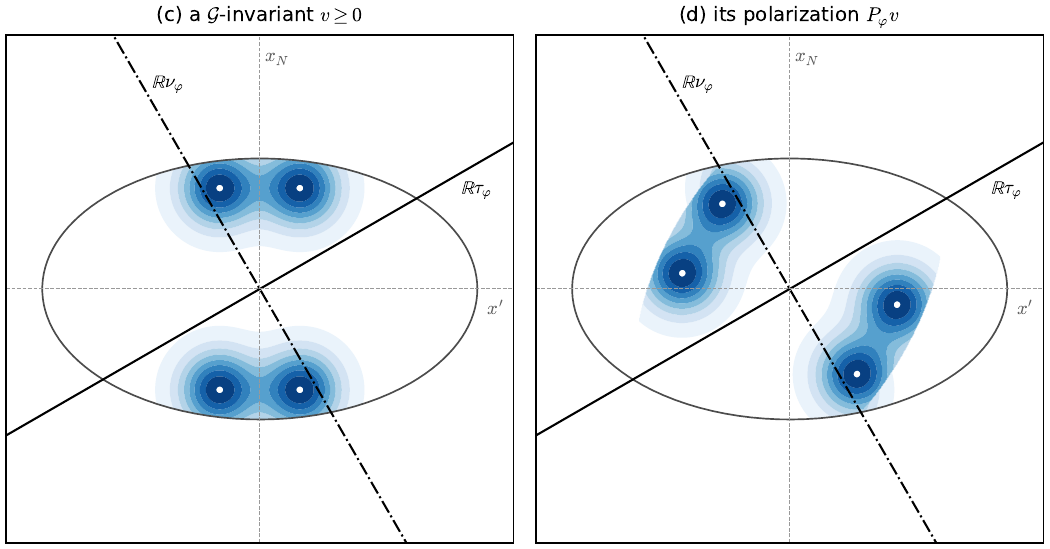}
\caption{(c) A non-negative $\mathcal G$-invariant function $v$ concentrated near the poles.
(d) Its polarization $P_\varphi v$: on each pair $\{x,\sigma_\varphi x\}$ with $x\in Q^+_\varphi$ the larger value is moved to $\sigma_\varphi x\in Q^-_\varphi$, so mass is pushed towards the equator, while $\|\nabla P_\varphi v\|_2\le\|\nabla v\|_2$ (Lemma~\ref{lem:polariz}). Same setting as in Figure~\ref{fig:polarization-a}.}
\label{fig:polarization-b}
\end{figure}

\begin{lemma}\label{lem:polariz}
Let $\Omega$ be admissible, $\varphi\in(0,\pi/2)$, $P=P_\varphi$, $\sigma=\sigma_\varphi$, and let $v\in H^1_{0,\G}(\Omega)$ with $v\ge0$.
\begin{enumerate}[label=\textup{(\alph*)}]
\item $Pv\in H^1_0(\Omega)$ and $\|\nabla Pv\|_2\le\|\nabla v\|_2$.
\item Let $f\in L^r(\R^N)$, $r=\frac{2N}{N+2}$, vanish outside $\Omega$ and satisfy $f(\sigma x)\ge f(x)$ for a.e.\ $x\in Q^+_\varphi$. Then $\int_\Omega fPv\ge\int_\Omega fv$.
\item $Pv=v$ a.e.\ if and only if $v(\sigma x)\ge v(x)$ for a.e.\ $x\in Q^+_\varphi$. 
\end{enumerate}
\end{lemma}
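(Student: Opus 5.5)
The plan is to write $Pv$ through min/max of $v$ and $v\circ\sigma$ on explicit sets. Put $w:=v\circ\sigma$. Then $w\in H^1(\R^N)$, and $\|\nabla w\|_2=\|\nabla v\|_2$ because $\sigma$ is orthogonal. By Corollary~\ref{cor:reflect} (second claim, applied to a pointwise $\G$-invariant representative), $w=v$ on $Z_\varphi$. Hence
\[
Pv=\chi_{Q^+}\min\{v,w\}+\chi_{Q^-}\max\{v,w\}\qquad\text{a.e.}
\]

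For (a), note that $\min\{v,w\}$ and $\max\{v,w\}$ both lie in $H^1(\R^N)$. We need to glue them across $Z_\varphi=\nu^\perp\cup\tau^\perp$.

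On $\nu^\perp$ the gluing works because $\sigma$ fixes it, so $v=w$ there. The two traces then agree, and $\min=\max=v$ on that hyperplane.

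On $\tau^\perp$ we do not have $v=w$ pointwise in a naive sense. However $v=w$ holds on $\tau^\perp$ in the trace sense, using $\G$-invariance, again by Corollary~\ref{cor:reflect}. The cleanest way to make this rigorous is to approximate $v$ by $\G$-invariant smooth functions (the density of \cite{HV} applied to the group $\G$). For smooth invariant $v$, the function $v-w$ is Lipschitz and vanishes on $Z_\varphi$. Then
\[
Pv=\tfrac12(v+w)-\tfrac12\,\mathrm{sgn}\big((x\cdot\nu)(x\cdot\tau)\big)\,|v-w|,
\]
and the second term is Lipschitz: it is the product of a sign that is constant on each quadrant with a Lipschitz function vanishing on the interfaces. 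So $Pv\in H^1$.

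For the gradient, pointwise a.e. on each quadrant the pair $\{|\nabla Pv(x)|,|\nabla Pv(\sigma x)|\}$ equals $\{|\nabla v(x)|,|\nabla v(\sigma x)|\}$. This holds where $v\ne w$, and also where $v=w$ a.e., since there $\nabla v=\nabla w$ a.e. Changing variables with $\sigma$ then gives $\|\nabla Pv\|_2=\|\nabla v\|_2$, which yields the inequality.

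To pass to general $v$, take $v_n\to v$ in $H^1$. The map $P$ is continuous in $L^2$ (it is $1$-Lipschitz there), and $\|\nabla Pv_n\|$ is bounded, so weak lower semicontinuity gives the bound for $Pv$. Finally, $Pv$ vanishes outside $\Omega$. If $v(x)=0=v(\sigma x)$ this is clear. If $x\notin\Omega$ with $x\in Q^-$ and $\sigma x\in\Omega$, then Corollary~\ref{cor:reflect} forces $x=\sigma(\sigma x)\in\Omega$, since $\sigma x\in Q^+$, a contradiction. For $x\in Q^+\setminus\Omega$ the min is $0$ because $v\ge0$. Hence $Pv\in H^1_0(\Omega)$. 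To be safe about $H^1_0$ versus $H^1$ vanishing outside $\Omega$ for non-smooth $\Omega$, I would instead approximate by $C_c^\infty(\Omega)$ invariant functions: their polarization has compact support in $\Omega$, by the same argument applied to supports.

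For (b), change variables $y=\sigma x$ on $Q^-$ and write
\[
\int f(Pv-v)=\int_{Q^+}\Big[f(x)\big(\min-v(x)\big)+f(\sigma x)\big(\max-v(\sigma x)\big)\Big]\dd x .
\]
Here $\min-v(x)=-(v(x)-w(x))_+$ and $\max-w(x)=(v(x)-w(x))_+$, so the bracket equals $(f(\sigma x)-f(x))(v(x)-v(\sigma x))_+\ge0$. Integrability follows from $f\in L^r$ and $v,Pv\in L^{2^*}$.

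For (c), $Pv=v$ a.e. on $Q^+$ says exactly $\min\{v,w\}=v$, that is $w\ge v$. On $Q^-$ the condition $Pv=v$ is then automatic, by the pairing \eqref{eq:pairs}.

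The main obstacle is the gluing across $\tau^\perp$ in (a), which genuinely needs the $\G$-symmetry. I handle it through smooth invariant approximation, as described.
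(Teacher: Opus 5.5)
Your proposal is correct and follows essentially the paper's proof. You approximate by $\G$-invariant functions with compact support in $\Omega$, and you glue across $Z_\varphi$ using $\psi\circ\sigma=\psi$ there; your formula $\tfrac12(v+w)-\tfrac12\,\mathrm{sgn}\big((x\cdot\nu)(x\cdot\tau)\big)|v-w|$ replaces the paper's segment-splitting argument. You then exchange $|\nabla v(x)|$ and $|\nabla v(\sigma x)|$ pairwise and change variables, pass to the limit using $L^2$-continuity of $P$ plus weak lower semicontinuity, and use the same pairwise computations for (b) and (c).

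The one point to tighten is that your compact-support argument uses $v\ge0$, so the approximants must be chosen nonnegative. You can take $v_n^+$ as the paper does. These are only Lipschitz, which is why the paper states Step~1 for Lipschitz functions, but your argument only uses that $v-w$ is Lipschitz, so it applies verbatim. Alternatively, mollify $v_n^+$ with a radial mollifier.
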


\begin{proof}
(a) \emph{Step 1: Lipschitz functions.} Let $\psi\ge0$ be a $\G$-invariant Lipschitz function with compact support $K_\psi\subset\Omega$.

\emph{$P\psi$ is Lipschitz.} On each of the four closed quadrants bounded by $\nu^\perp$ and $\tau^\perp$, $P\psi$ coincides with either $\min\{\psi,\psi\circ\sigma\}$ or $\max\{\psi,\psi\circ\sigma\}$. These are Lipschitz functions with constant $\mathrm{Lip}(\psi)$. On $Z_\varphi$ we have $\psi\circ\sigma=\psi$ by Corollary~\ref{cor:reflect}, so the minimum and the maximum both equal $\psi$ there, and $P\psi$ is continuous. A segment $[x,y]$ either lies in one of the hyperplanes or meets each of them at most once. Splitting it accordingly into at most three pieces, each contained in a closed quadrant, gives $|P\psi(x)-P\psi(y)|\le\mathrm{Lip}(\psi)|x-y|$.

\emph{$P\psi$ has compact support in $\Omega$.} If $x\in Q^+$ and $P\psi(x)\ne0$, then $\psi(x)>0$ because $\psi\ge0$, so $x\in K_\psi$. If $x\in Q^-$ and $P\psi(x)\ne0$, then $\psi(x)>0$ or $\psi(\sigma x)>0$; in the second case $\sigma x\in K_\psi\cap Q^+$. Hence
\[
\supp P\psi\subset K_\psi\cup\sigma\big(K_\psi\cap\overline{Q^+}\big),
\]
and the right-hand side is a compact subset of $\Omega$ by Corollary~\ref{cor:reflect}. In particular $P\psi\in H^1_0(\Omega)$.

\emph{$P\psi$ has the same Dirichlet integral as $\psi$.} Consider the disjoint sets
\[
\begin{gathered}
E_-:=\{x\in Q^+:\ \psi(x)<\psi(\sigma x)\},\qquad E_+:=\{x\in Q^+:\ \psi(x)>\psi(\sigma x)\},\\
E_=:=\{x\in Q^+:\ \psi(x)=\psi(\sigma x)\}.
\end{gathered}
\]
$E_-$ and $E_+$ are open. On $E_-\cup\sigma(E_-)$ we have $P\psi=\psi$. On $E_+\cup\sigma(E_+)$ we have $P\psi=\psi\circ\sigma$, by \eqref{eq:pairs}. On $E_=\cup\sigma(E_=)$ we have $P\psi=\psi=\psi\circ\sigma$, and $\nabla\psi=\nabla(\psi\circ\sigma)$ a.e.\ there. Since $|\nabla(\psi\circ\sigma)(x)|=|\nabla\psi(\sigma x)|$ and $\sigma$ preserves the measure of each of the three sets $E_+\cup\sigma(E_+)$,  $E_-\cup\sigma(E_-)$ and $E_=\cup\sigma(E_=)$ then,
\[
\int_{\R^N}|\nabla P\psi|^2=\int_{E_-\cup\sigma(E_-)}|\nabla\psi|^2+\int_{E_=\cup\sigma(E_=)}|\nabla\psi|^2+\int_{E_+\cup\sigma(E_+)}|\nabla\psi(\sigma x)|^2=\int_{\R^N}|\nabla\psi|^2 .
\]
\emph{Step 2: general $v$.} 
By density we have a sequence $v_n\in C^\infty_c(\Omega)$ of $\G$-invariant functions such that $v_n\to v$ in $H^1$. Since $v_n^+\to v^+=v$ we can assume that $v_n\geq 0$.
Since $\min$ and $\max$ are $1$-Lipschitz with respect to the sup norm on $\R^2$,
\[
|Pv_n(x)-Pv(x)|^2\le|v_n-v|^2(x)+|v_n-v|^2(\sigma x),\qquad\text{so}\qquad\|Pv_n-Pv\|_{L^2}^2\le2\|v_n-v\|_{L^2}^2\to0 .
\]
By Step 1, $(Pv_n)$ is bounded in $H^1_0(\Omega)$. Up to a subsequence it converges weakly in $H^1_0(\Omega)$, and the limit must be $Pv$. Hence $Pv\in H^1_0(\Omega)$ and
\[
\|\nabla Pv\|_2\le\liminf_n\|\nabla Pv_n\|_2=\lim_n\|\nabla v_n\|_2=\|\nabla v\|_2 .
\]

(b) Both $fv$ and $fPv$ are integrable, since $v,Pv\in L^{2^*}$ and $f\in L^r$ with $\frac1r+\frac1{2^*}=1$. By the change of variables $x\mapsto\sigma x$ on $Q^-=\sigma(Q^+)$ (and since $Z_\varphi$ is null), for every integrable $h$,
\[
\int_{\R^N}h=\int_{Q^+}\big(h(x)+h(\sigma x)\big)\dd x .
\]
Fix $x\in Q^+$, then by \eqref{eq:pairs},
\begin{multline*}
f(x)Pv(x)+f(\sigma x)Pv(\sigma x)-\big(f(x)v(x)+f(\sigma x)v(\sigma x)\big)=f(x)\min\{v(x),v(\sigma x)\}\\
+f(\sigma x)\max\{v(x),v(\sigma x)\}-f(x)v(x)-f(\sigma x)v(\sigma x) .
\end{multline*}
%\ta 
%\[
%f(x)Pv(x)+f(\sigma x)Pv(\sigma x)-\big(f(x)v(x)+f(\sigma x)v(\sigma x)\big)=f_1\min\{a,b\}+f_2\max\{a,b\}-f_1a-f_2b .
%\]
%This is $0$ if $a\le b$ and $(f_2-f_1)(a-b)\ge0$ if $a>b$. }
which vanishes if $v(x)\le v(\sigma x)$ and $(f(\sigma x)-f(x))(v(x)-v(\sigma x))\ge0$ if $v(x)>v(\sigma x)$. 
Integrating over $Q^+$ gives the claim.

(c) By \eqref{eq:pairs}, $Pv=v$ a.e.\ on $Q^+$ if and only if $v\le v\circ\sigma$ a.e.\ on $Q^+$, and in that case also $Pv(\sigma x)=\max\{v(x),v(\sigma x)\}=v(\sigma x)$ for a.e.\ $x\in Q^+$, i.e.\ $Pv=v$ a.e.\ on $Q^-$.
\end{proof}

\begin{proposition}[Invariance]\label{prop:inv}
Let $\Omega$ be admissible, $\alpha>0$, $1\le p<p_\alpha$ and $u\in\K$. Let $v\in H^1_0(\Omega)$ be the weak solution of
\begin{equation}\label{eq:lin}
-\Delta v=|x|^\alpha u^p\quad\text{in }\Omega,\qquad v=0\quad\text{on }\partial\Omega .
\end{equation}
Then $v\in\K$.
\end{proposition}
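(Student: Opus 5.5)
Set $f:=|x|^\alpha u^p$, extended by zero outside $\Omega$. By Lemma~\ref{lem:dual}, $f\in L^r(\Omega)\subset H^{-1}(\Omega)$, so $v$ exists and is unique by Lax--Milgram. It is characterized as the unique minimizer on $H^1_0(\Omega)$ of
\[
J(w):=\frac12\int_\Omega|\nabla w|^2-\int_\Omega fw .
\]
We must check three things: $v\ge0$, $v$ is $\G$-invariant, and \eqref{eq:reflK} holds for every $\varphi$.

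Nonnegativity follows from the weak maximum principle, since $f\ge0$: test with $v^-$. For $\G$-invariance, note that for $g\in\G$ the function $v\circ g$ solves the same problem, because $\Omega$, $|x|$ and $u$ are $\G$-invariant and $\Delta$ commutes with orthogonal maps. Uniqueness then gives $v\circ g=v$, so $v\in H^1_{0,\G}(\Omega)$.

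For the reflection inequality, fix $\varphi\in(0,\pi/2)$ and use polarization. First, $f(\sigma_\varphi x)\ge f(x)$ for a.e.\ $x\in Q^+_\varphi$. This holds because $|\sigma_\varphi x|=|x|$ and, by Lemma~\ref{lem:Kprop}(c), $u(\sigma_\varphi x)\ge u(x)$ a.e.\ on $Q^+_\varphi$, which is preserved by the non-decreasing map $s\mapsto s^p$. Moreover $f$ vanishes outside $\Omega$ and lies in $L^r$. Since $v\ge0$ is $\G$-invariant, Lemma~\ref{lem:polariz}(a),(b) gives $P_\varphi v\in H^1_0(\Omega)$ together with
\[
\|\nabla P_\varphi v\|_2\le\|\nabla v\|_2 \quad\text{and}\quad \int fP_\varphi v\ge\int fv .
\]
Hence $J(P_\varphi v)\le J(v)$. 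Since $J$ is strictly convex, its minimizer is unique, so $P_\varphi v=v$ a.e. Lemma~\ref{lem:polariz}(c) then yields $v(\sigma_\varphi x)\ge v(x)$ for a.e.\ $x\in Q^+_\varphi$, in particular on $W_\varphi$. Hence $v\in\K$.

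The step needing the most care is the comparison $J(P_\varphi v)\le J(v)$. It requires that $P_\varphi v$ really belongs to $H^1_0(\Omega)$, which depends on the admissibility of $\Omega$ through Corollary~\ref{cor:reflect}. It also requires that $f$ has the correct monotonicity on all of $Q^+_\varphi$, not only on $W_\varphi$; this is where Lemma~\ref{lem:Kprop}(c) and the $C$-symmetry enter. Both points are already covered by the earlier lemmas, so the argument reduces to the uniqueness of the minimizer of $J$.
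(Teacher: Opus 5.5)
Your proposal is correct and follows essentially the same route as the paper. Both characterize $v$ as the unique minimizer of the strictly convex energy, deduce $\G$-invariance from uniqueness, and obtain the reflection inequalities from $J(P_\varphi v)\le J(v)$ via Lemma~\ref{lem:polariz} together with uniqueness; the only cosmetic difference is that you get $v\ge0$ by testing with $v^-$, whereas the paper uses $I(|v|)\le I(v)$.
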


\begin{proof}
Let $f:=|x|^\alpha u^p$, extended by zero outside $\Omega$. By Lemma~\ref{lem:dual}, $f\in L^r(\Omega)\subset H^{-1}(\Omega)$. So $v$ exists, is unique, and is the unique minimizer on $H^1_0(\Omega)$ of the strictly convex functional
\[
I(z):=\frac12\int_\Omega|\nabla z|^2-\int_\Omega fz .
\]

\emph{$v\ge0$.} Since $f\ge0$, $I(|v|)\le I(v)$, hence $|v|=v$.

\emph{$v$ is $\G$-invariant.} For $g\in\G$, $v\circ g\in H^1_0(\Omega)$ because $g\Omega=\Omega$, and $f\circ g=f$. So $I(v\circ g)=I(v)$ and, by uniqueness of the minimizer, $v\circ g=v$.

\emph{Reflection inequalities.} Fix $\varphi\in(0,\pi/2)$. By Lemma~\ref{lem:Kprop}(c) and $|\sigma_\varphi x|=|x|$, we have $f(\sigma_\varphi x)=|x|^\alpha u(\sigma_\varphi x)^p\ge|x|^\alpha u(x)^p=f(x)$ for a.e.\ $x\in Q^+_\varphi$. By Lemma~\ref{lem:polariz}(a),(b), $P_\varphi v\in H^1_0(\Omega)$ and
\[
I(P_\varphi v)=\frac12\|\nabla P_\varphi v\|_2^2-\int fP_\varphi v\le\frac12\|\nabla v\|_2^2-\int fv=I(v).
\]
By uniqueness of the minimizer, $P_\varphi v=v$. By Lemma~\ref{lem:polariz}(c), $v(\sigma_\varphi x)\ge v(x)$ for a.e.\ $x\in Q^+_\varphi\supset W_\varphi$. Hence $v\in\K$.
\end{proof}

%=====================================================================
\section{Proof of Theorem~\ref{thm:main}}\label{sec:proof}
%=====================================================================

Throughout this section $\Omega$ is admissible, $\alpha>0$, $1<p<p_\alpha$, and
\[
F(u):=\int_\Omega|x|^\alpha|u|^{p+1}\dd x,\qquad \Sigma:=\{u\in\K:\ F(u)=1\},\qquad \mathsf S:=\inf_{u\in\Sigma}\int_\Omega|\nabla u|^2\dd x .
\]
Since $p+1<q_\alpha$, $F$ is finite on $\K$ by Theorem~\ref{cor:emb}.

\begin{lemma}\label{lem:min}
$0<\mathsf S<\infty$ and $\mathsf S$ is attained.
\end{lemma}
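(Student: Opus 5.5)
We prove the three assertions in order: $\Sigma\neq\emptyset$ (so $\mathsf S<\infty$), a uniform lower bound from Theorem~\ref{cor:emb} (so $\mathsf S>0$), and existence of a minimizer by the direct method using Proposition~\ref{prop:compact}.

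\emph{Finiteness.} By Lemma~\ref{lem:Kprop}(b), any non-negative, non-zero radial $\psi\in C^\infty_c(B_\rho)$ with $B_\rho\subset\Omega$ belongs to $\K$. For such $\psi$ we have $0<F(\psi)<\infty$, since $|x|^\alpha>0$ a.e. Because $\K$ is a cone and $F$ is $(p+1)$-homogeneous, $\psi/F(\psi)^{1/(p+1)}\in\Sigma$. Hence $\Sigma\ne\emptyset$ and $\mathsf S<\infty$.

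\emph{Positivity.} Apply Theorem~\ref{cor:emb} with $q=p+1\in(2,q_\alpha)$. For every $u\in\Sigma$ it gives $1=F(u)\le C\|\nabla u\|_2^{p+1}$. Therefore $\|\nabla u\|_2^2\ge C^{-2/(p+1)}$, so $\mathsf S\ge C^{-2/(p+1)}>0$.

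\emph{Attainment.} Let $(u_j)\subset\Sigma$ be a minimizing sequence. It is bounded in $H^1_0(\Omega)$, so up to a subsequence $u_j\rightharpoonup u$ weakly in $H^1_0(\Omega)$.
\begin{itemize}[label=--]
\item By Proposition~\ref{prop:compact} with $q=p+1<q_\alpha$, we get $u\in\K$ and $\int_\Omega|x|^\alpha|u_j-u|^{p+1}\to0$.
\item This is convergence in the weighted space $L^{p+1}(\Omega,|x|^\alpha\dd x)$. The Minkowski inequality in that space gives $F(u_j)^{1/(p+1)}\to F(u)^{1/(p+1)}$, so $F(u)=1$ and $u\in\Sigma$.
\item Weak lower semicontinuity of the Dirichlet norm gives $\|\nabla u\|_2^2\le\liminf_j\|\nabla u_j\|_2^2=\mathsf S$.
\end{itemize}
Since $u\in\Sigma$, also $\|\nabla u\|_2^2\ge\mathsf S$, so equality holds and $u$ is a minimizer.

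The only substantive step is the compactness of the constraint $F(u_j)=1$ under weak convergence. On $H^1_0(\Omega)$ this would fail in the supercritical range, but it is supplied here by Proposition~\ref{prop:compact}. The remaining steps are routine.
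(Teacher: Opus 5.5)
Your proof is correct and follows the paper's argument essentially step for step. Both use Lemma~\ref{lem:Kprop}(b) to show $\Sigma\neq\emptyset$, Theorem~\ref{cor:emb} with $q=p+1$ for the lower bound, and the direct method with Proposition~\ref{prop:compact} and weak lower semicontinuity for attainment.
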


\begin{proof}
By Lemma~\ref{lem:Kprop}(b) there is $\psi\in\K$ with $F(\psi)>0$. Since $\K$ is a cone, $\psi/F(\psi)^{1/(p+1)}\in\Sigma$, so $\mathsf S<\infty$. By Theorem \ref{cor:emb} with $q=p+1$, for $u\in\Sigma$ we have $1=F(u)\le C\|\nabla u\|_2^{p+1}$, so $\mathsf S\ge C^{-2/(p+1)}>0$.

Let $(u_j)\subset\Sigma$ be a minimizing sequence. It is bounded in $H^1_0(\Omega)$, so up to a subsequence $u_j\rightharpoonup u$. By Proposition~\ref{prop:compact} with $q=p+1<q_\alpha$, $u\in\K$ and $u_j\to u$ in $L^{p+1}(\Omega,|x|^\alpha\dd x)$, so $F(u)=\lim F(u_j)=1$ and $u\in\Sigma$. By weak lower semicontinuity of the norm, $\|\nabla u\|_2^2\le\liminf\|\nabla u_j\|_2^2=\mathsf S$. Hence $u$ is a minimizer.
\end{proof}

\begin{lemma}[Variational inequality]\label{lem:VI}
Let $u\in\Sigma$ be a minimizer. Then for every $w\in\K$,
\begin{equation}\label{eq:VI}
\int_\Omega\nabla u\cdot\nabla(w-u)\dd x\ \ge\ \mathsf S\int_\Omega|x|^\alpha u^p(w-u)\dd x .
\end{equation}
\end{lemma}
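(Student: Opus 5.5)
The plan is to use the convexity of $\K$ and to differentiate along the segment from $u$ to $w$. Fix $w\in\K$ and, for $t\in(0,1]$, set $u_t:=u+t(w-u)=(1-t)u+tw$. Since $\K$ is a convex cone (Lemma~\ref{lem:Kprop}(a)), $u_t\in\K$. Because $u\in\Sigma$, we have $F(u)=1$, and $F(u_t)$ is finite by Theorem~\ref{cor:emb}. As $t\to0$, $F(u_t)\to1>0$, so for small $t$ the normalized function $u_t/F(u_t)^{1/(p+1)}$ belongs to $\Sigma$, again by the cone property. Minimality of $u$ then gives
\[
g(t):=\int_\Omega|\nabla u_t|^2-\mathsf S\,F(u_t)^{2/(p+1)}\ \ge\ 0=g(0)
\]
for all small $t>0$. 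Hence $\liminf_{t\to0^+}g(t)/t\ge0$.

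Next I would compute the one-sided derivative of each term. The Dirichlet term is a quadratic polynomial in $t$, and its derivative at $0$ is $2\int_\Omega\nabla u\cdot\nabla(w-u)$. For $F$, all functions involved are non-negative, so $|u_t|^{p+1}=u_t^{p+1}$. The map $t\mapsto u_t^{p+1}$ is convex and $C^1$ pointwise, with derivative $(p+1)u_t^p(w-u)$.

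To differentiate under the integral I would use the mean value theorem. It gives
\[
\frac{|u_t^{p+1}-u^{p+1}|}{t}\le (p+1)\,(u+w)^p\,|w-u|,
\]
since $0\le u_t\le u+w$ for $t\in[0,1]$. The dominating function $|x|^\alpha(u+w)^p|w-u|$ is integrable by Hölder's inequality with exponents $\frac{p+1}{p}$ and $p+1$, together with Theorem~\ref{cor:emb} applied with $q=p+1$ to $u+w\in\K$ and to $u,w$. Here I use $p+1<q_\alpha$.

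With this bound, dominated convergence gives $\frac{d}{dt}F(u_t)\big|_{t=0^+}=(p+1)\int_\Omega|x|^\alpha u^p(w-u)$. The chain rule and $F(u)=1$ then give
\[
\frac{d}{dt}F(u_t)^{2/(p+1)}\Big|_{t=0^+}=2\int_\Omega|x|^\alpha u^p(w-u).
\]
Substituting these two derivatives into $g'(0^+)\ge0$ and dividing by $2$ yields \eqref{eq:VI}.

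The only delicate point is the justification of differentiation under the integral in $F$. In the supercritical regime $F$ is not defined on all of $H^1_0(\Omega)$, so integrability must come from membership in $\K$. This is why the whole perturbation is kept inside $\K$, where Theorem~\ref{cor:emb} provides the needed integrability, and no outward perturbations are used.
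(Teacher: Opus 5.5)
Your proof is correct and follows essentially the same route as the paper. Both arguments use the perturbation $u_t=(1-t)u+tw\in\K$, normalize via the cone property, dominate the derivative by $(p+1)|x|^\alpha(u+w)^p|w-u|$ using H\"older and Theorem~\ref{cor:emb}, and take a one-sided derivative at $t=0$. The only cosmetic difference is that you work with $g(t)=\|\nabla u_t\|_2^2-\mathsf S\,F(u_t)^{2/(p+1)}$ rather than the Rayleigh quotient, which is equivalent because $F(u_t)>0$ for small $t$.
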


\begin{proof}
Let $w\in\K$ and $u_t:=u+t(w-u)=(1-t)u+tw\in\K$ for $t\in[0,1]$. We have $0\le u_t\le u+w$. The map $t\mapsto F(u_t)$ is differentiable on $[0,1]$ with
\[
\frac{\dd}{\dd t}F(u_t)=(p+1)\int_\Omega|x|^\alpha u_t^p(w-u) .
\]
Indeed, the $t$-derivative of the integrand is bounded by $(p+1)|x|^\alpha(u+w)^p|w-u|$. This function is integrable by H\"older's inequality with exponents $\frac{p+1}p$ and $p+1$ with respect to the measure $|x|^\alpha\dd x$, since $u,w\in L^{p+1}(\Omega,|x|^\alpha\dd x)$ by Theorem~\ref{cor:emb}.  Since $F(u_0)=1$, we have $F(u_t)>0$ for $t$ small. As $\K$ is a cone, $u_t/F(u_t)^{1/(p+1)}\in\Sigma$, and so
\[
\mathcal R(t):=\frac{\|\nabla u_t\|_2^2}{F(u_t)^{2/(p+1)}}\ge\mathsf S=\mathcal R(0)\qquad\text{for $t\ge0$ small}.
\]
Therefore $\mathcal R'(0^+)\ge0$. Using $F(u)=1$ and $\|\nabla u\|_2^2=\mathsf S$,
\[
\mathcal R'(0^+)=2\int_\Omega\nabla u\cdot\nabla(w-u)-\mathsf S\cdot\frac2{p+1}\cdot(p+1)\int_\Omega|x|^\alpha u^p(w-u)\ \ge0,
\]
which is \eqref{eq:VI}.
\end{proof}

\begin{proposition}\label{prop:sol}
Let $u\in\Sigma$ be a minimizer. Then
\begin{equation}\label{eq:weak}
\int_\Omega\nabla u\cdot\nabla\psi\dd x=\mathsf S\int_\Omega|x|^\alpha u^p\psi\dd x\qquad\text{for every }\psi\in H^1_0(\Omega).
\end{equation}
\end{proposition}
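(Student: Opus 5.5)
The plan follows the standard trick for passing from a variational inequality on a convex set to an equation (as in \cite{Mo1,Mo2}). We use the invariance of $\K$ under the solution operator (Proposition~\ref{prop:inv}) to produce a good competitor $w$.

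\emph{Step 1: the competitor.} Let $u\in\Sigma$ be a minimizer. Let $v\in H^1_0(\Omega)$ be the weak solution of
\[
-\Delta v=\mathsf S|x|^\alpha u^p\ \text{ in }\Omega,\qquad v=0\ \text{ on }\partial\Omega .
\]
By Lemma~\ref{lem:dual} the right-hand side lies in $L^r\subset H^{-1}$, so $v$ exists. Since $\K$ is a cone and $\mathsf S>0$, Proposition~\ref{prop:inv} gives $v\in\K$. By definition,
\[
\int_\Omega\nabla v\cdot\nabla\psi=\mathsf S\int_\Omega|x|^\alpha u^p\psi\qquad\text{for all }\psi\in H^1_0(\Omega).
\]

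\emph{Step 2: testing the variational inequality.} Take $w=v$ in \eqref{eq:VI}; this is allowed since $v\in\K$. The right-hand side of \eqref{eq:VI} becomes $\mathsf S\int|x|^\alpha u^p(v-u)=\int\nabla v\cdot\nabla(v-u)$, where we used the weak equation for $v$ with $\psi=v-u\in H^1_0(\Omega)$. Thus
\[
\int_\Omega\nabla u\cdot\nabla(v-u)\ge\int_\Omega\nabla v\cdot\nabla(v-u),
\qquad\text{i.e.}\qquad -\|\nabla(v-u)\|_2^2\ge0 .
\]
Hence $v=u$. Substituting $u=v$ into the weak equation for $v$ gives \eqref{eq:weak} for every $\psi\in H^1_0(\Omega)$. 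The integrals make sense by the last assertion of Lemma~\ref{lem:dual}.

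\emph{Main obstacle.} The only delicate point is that the competitor $v$ must lie in $\K$. This is exactly Proposition~\ref{prop:inv}, whose polarization argument carries the real work. We also need $\mathsf S>0$ (Lemma~\ref{lem:min}) so that scaling the source term keeps us in the hypotheses. No test functions outside $\K$ are required, because the conclusion follows from the single choice $w=v$.
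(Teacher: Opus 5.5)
Your proof is correct and follows the paper's argument essentially verbatim. You take the solution of $-\Delta v=\mathsf S|x|^\alpha u^p$, place it in $\K$ via Proposition~\ref{prop:inv} and the cone property, test \eqref{eq:VI} with it, and conclude from $-\|\nabla(v-u)\|_2^2\ge0$ that $v=u$. As a minor point, positivity of $\mathsf S$ is not actually needed for the scaling step, since $\K$ is closed under multiplication by any non-negative constant.
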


\begin{proof}
Let $w\in H^1_0(\Omega)$ be the weak solution of $-\Delta w=\mathsf S|x|^\alpha u^p$ in $\Omega$. By Proposition~\ref{prop:inv} and since $\K$ is a cone, $w=\mathsf S\,v$ with $v\in\K$, so $w\in\K$. Testing the equation of $w$ with $w-u\in H^1_0(\Omega)$ (legitimate by Lemma~\ref{lem:dual}) gives
\[
\int_\Omega\nabla w\cdot\nabla(w-u)=\mathsf S\int_\Omega|x|^\alpha u^p(w-u).
\]
Choosing this $w$ in \eqref{eq:VI} and subtracting,
\[
\int_\Omega\nabla u\cdot\nabla(w-u)\ge\int_\Omega\nabla w\cdot\nabla(w-u),\qquad\text{i.e.}\qquad-\int_\Omega|\nabla(w-u)|^2\ge0 .
\]
Hence $w=u$, which is \eqref{eq:weak}.
\end{proof}

As recalled in the introduction, since the nonlinearity is in general supercritical, the boundedness of the solution is not automatic. However, the argument used in the following proposition is rather standard.

\begin{proposition}\label{prop:reg}
Let $U\in\K$ satisfy
\begin{equation}\label{eq:weakU}
\int_\Omega\nabla U\cdot\nabla\psi=\int_\Omega|x|^\alpha U^p\psi\qquad\text{for every }\psi\in H^1_0(\Omega).
\end{equation}
Then $U\in L^\infty(\Omega)\cap C^{2,\gamma}_{\rm loc}(\Omega)$. If moreover $\partial\Omega$ is Lipschitz, then $U\in C(\overline\Omega)$ and $U=0$ on $\partial\Omega$.
\end{proposition}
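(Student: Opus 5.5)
The plan is a Moser iteration in which every test function is built from $U$ by non-decreasing Lipschitz truncations, so that it stays in $\K$ (Lemma~\ref{lem:Kprop}(d)) and the weighted inequality \eqref{eq:embK} of Proposition~\ref{prop:embK} can be used at each step. The idea is to write the right-hand side as $a(x)U$ with $a:=|x|^\alpha U^{p-1}$. The first goal is to show that $a$ lies in $L^{s}(\Omega)$ for some $s>N/2$, or, more flexibly, that $a$ is controlled in a weighted sense adapted to \eqref{eq:embK}.

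Lemma~\ref{lem:dual} gives only $aU\in L^{2N/(N+2)}$, which is not enough. So I would first run a Brezis--Kato step. For $L>0$ and $\beta\ge0$, take $\psi=U\min\{U,L\}^{2\beta}$. This is $h(U)$ with $h$ non-decreasing, Lipschitz and $h(0)=0$, so $\psi\in H^1_0(\Omega)$. The quantity $w_L:=U\min\{U,L\}^{\beta}$ also lies in $\K$. The standard computation gives
\[
\int|\nabla w_L|^2\le C(\beta)\int|x|^\alpha U^{p-1}w_L^2 .
\]
To absorb the right-hand side, split $a=a\chi_{\{a>M\}}+a\chi_{\{a\le M\}}$. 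The large part should be handled with a weighted H\"older inequality. Write $|x|^\alpha U^{p-1}w_L^2\le |x|^{\alpha}U^{p-1}\chi_{\{a>M\}}\,w_L^2$ and apply \eqref{eq:embK} to $w_L\in\K$ with a suitable $\kappa$. For small $\kappa>0$, \eqref{eq:embK} controls $\int w_L^{2^*+2\kappa}|x|^{\kappa(N-2)}$. Pairing with H\"older then requires the integrability
\[
\int |x|^{\alpha t-\kappa(N-2)(t-1)\cdot\frac{2}{2^*+2\kappa}\cdots}U^{(p-1)t}<\infty,
\]
where $t$ is the dual exponent. I would check, exactly as in the exponent bookkeeping $e_\kappa\to r(\alpha-(N-2)p/2)>-N$ of Lemma~\ref{lem:dual}, that the condition $p<p_\alpha$ leaves room. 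Bounding $U^{(p-1)t}$ by \eqref{eq:embK} again with large $\kappa$ should make this integral finite, and it tends to zero as $M\to\infty$. Absorbing and letting $L\to\infty$ gives $U^{1+\beta}\in H^1_0$, and hence, through \eqref{eq:embK}, $U\in L^{q}(|x|^{\kappa(N-2)}dx)$ for all $q<\infty$. Away from the origin this yields $U\in L^q_{\rm loc}(\overline\Omega\setminus\{0\})$ for every $q$. Near the origin, combining this with the weight and with $\alpha>0$ gives $a\in L^{s}(\Omega)$ for some $s>N/2$; the gain from the weight near $0$ is exactly what $p<p_\alpha$ provides.

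Once $a\in L^s$ with $s>N/2$, the classical Moser iteration, or directly \cite[Theorem 8.15]{GT} applied to $-\Delta U=aU$ in $H^1_0$, gives $U\in L^\infty(\Omega)$. Then $|x|^\alpha U^p\in L^\infty$, so $U\in W^{2,q}_{\rm loc}$ for all $q$ and $U\in C^{1,\gamma}_{\rm loc}$. The function $|x|^\alpha U^p$ is Hölder continuous, since $|x|^\alpha$ is Hölder for $\alpha>0$ (with exponent $\min\{\alpha,1\}$) and $U^p$ is $C^{0,1}_{\rm loc}$ with $U\ge0$, $p>1$. Schauder estimates therefore give $U\in C^{2,\gamma}_{\rm loc}(\Omega)$.

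For the boundary, a Lipschitz domain satisfies an exterior cone condition, so every boundary point is Wiener-regular. Since the right-hand side is bounded and $U\in H^1_0$, \cite[Theorem 8.30]{GT} (or the comparison with the solution of the Poisson problem with bounded datum) gives $U\in C(\overline\Omega)$ with $U=0$ on $\partial\Omega$.

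The main obstacle is the first step: getting from $U\in\K$ to $a\in L^s$, $s>N/2$, uniformly near the origin, where the exponent is supercritical. The test functions must stay in $\K$, and the exponent arithmetic has to close precisely because $p<p_\alpha$. I would first try to mimic the exponent computation of Lemma~\ref{lem:dual} with $\kappa\to\infty$.
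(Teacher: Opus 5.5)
Your plan has the same architecture as the paper's proof: a Brezis--Kato iteration whose test functions are built from $U$ by non-decreasing Lipschitz truncations, absorption of the large part of $\int|x|^\alpha U^{p-1}w^2$ through a cone inequality, Sobolev to raise the exponent, and then classical $L^\infty$, Schauder and boundary regularity. The difference is the absorption step, and you leave its crux unverified (the exponent you display is incomplete). The paper splits on $\{U>M\}$ instead of $\{a>M\}$. It then applies H\"older with respect to the measure $|x|^\alpha\dd x$, with exponents $\frac{p+1}{p-1}$ and $\frac{p+1}{2}$:
\[
\int_{\{U>M\}}|x|^\alpha U^{p-1}w^2\le\Big(\int_{\{U>M\}}|x|^\alpha U^{p+1}\Big)^{\frac{p-1}{p+1}}\Big(\int_\Omega|x|^\alpha w^{p+1}\Big)^{\frac{2}{p+1}}.
\]
Theorem~\ref{cor:emb} with $q=p+1$, applied to $w=h_1(U)\in\K$, bounds the last factor by $C\|\nabla w\|_2^2$. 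The smallness is then just the tail of the finite integral $\int_\Omega|x|^\alpha U^{p+1}$, and no exponent bookkeeping is needed.

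Your bookkeeping does close. With $Q=2^*+2\kappa$ and $t=Q/(Q-2)$ you need $\int_\Omega a^t|x|^{-2\kappa(N-2)/(Q-2)}<\infty$. The limiting exponent in the H\"older argument of Lemma~\ref{lem:dual} turns this into $\alpha-\frac{(N-2)(p-1)}{2}>-2$, i.e.\ $p<p_\alpha$, for every $\kappa\ge0$.

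In fact the step you call the main obstacle is immediate. The same argument applied to $\int_\Omega|x|^{\alpha s}U^{(p-1)s}$ has limiting exponent $s\big(\alpha-\frac{(N-2)(p-1)}{2}\big)$, which is $>-N$ for some $s>N/2$ precisely because $p<p_\alpha$. So $a=|x|^\alpha U^{p-1}\in L^s(\Omega)$ with $s>N/2$ follows directly from \eqref{eq:embK}, and your preliminary weighted Brezis--Kato step is redundant.

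This gives a leaner variant of the proof, in which the cone is used only once. After that, $-\Delta U=aU$ with $0\le a\in L^s$ is pure linear theory:
\begin{enumerate}
\item the standard Brezis--Kato lemma with unweighted Sobolev, which does not need $h(U)\in\K$, gives $U\in L^q$ for all $q<\infty$;
\item Gilbarg--Trudinger, Theorem~8.15, applied with right-hand side $aU\in L^r$, $N/2<r<s$, then gives $U\in L^\infty$.
\end{enumerate}
That theorem assumes bounded coefficients, so apply it in this way, not with $a$ as a zero-order coefficient as you wrote. The paper instead keeps the cone inside every step of the iteration, trading this exponent computation for the single inequality of Theorem~\ref{cor:emb}. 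Your Schauder and boundary arguments are fine.
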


\begin{proof}
\emph{Step 1: $U\in L^q(\Omega)$ for every $q<\infty$.} Let $\beta\ge1$ and assume $U\in L^{2\beta}(\Omega)$. For $L>0$ define
\[
h_1(s):=s\min\{s,L\}^{\beta-1},\qquad h_2(s):=s\min\{s,L\}^{2(\beta-1)}\qquad(s\ge0).
\]
Both are non-decreasing, Lipschitz, and vanish at $0$. Hence $w:=h_1(U)\in\K$ by Lemma~\ref{lem:Kprop}(d), and $\psi:=h_2(U)\in H^1_0(\Omega)$. We have
\[
h_1'(s)^2=\begin{cases}\beta^2s^{2\beta-2}&s<L\\ L^{2\beta-2}&s>L\end{cases},\qquad
h_2'(s)=\begin{cases}(2\beta-1)s^{2\beta-2}&s<L\\ L^{2\beta-2}&s>L\end{cases}.
\]
Since $\beta^2/(2\beta-1)\le\beta^2$ and $1\le\beta^2$, we get $(h_1')^2\le\beta^2h_2'$ pointwise. Hence, by \eqref{eq:weakU} and $Uh_2(U)=h_1(U)^2=w^2$,
\[
\int_\Omega|\nabla w|^2=\int_\Omega h_1'(U)^2|\nabla U|^2\le\beta^2\int_\Omega h_2'(U)|\nabla U|^2=\beta^2\int_\Omega\nabla U\cdot\nabla\psi=\beta^2\int_\Omega|x|^\alpha U^{p-1}w^2 .
\]
For $M>0$ we split the last integral into $\{U>M\}$ and $\{U\le M\}$. On $\{U>M\}$ we use H\"older's inequality with exponents $\frac{p+1}{p-1}$, $\frac{p+1}2$ with respect to $|x|^\alpha\dd x$, together with Theorem~\ref{cor:emb} applied to $w\in\K$ with $q=p+1$:
\[
\int_{\{U>M\}}|x|^\alpha U^{p-1}w^2\le\epsilon_M^{\frac{p-1}{p+1}}\Big(\int_\Omega|x|^\alpha w^{p+1}\Big)^{\frac2{p+1}}\le\epsilon_M^{\frac{p-1}{p+1}}C_*\|\nabla w\|_2^2,\qquad \epsilon_M:=\int_{\{U>M\}}|x|^\alpha U^{p+1}.
\]
On $\{U\le M\}$,  letting $R=\sup_{x\in \Omega}|x|$, we use $w\le U^\beta$:
\[
\int_{\{U\le M\}}|x|^\alpha U^{p-1}w^2\le R^\alpha M^{p-1}\int_\Omega U^{2\beta}.
\]
Since $|x|^\alpha U^{p+1}\in L^1(\Omega)$, $\epsilon_M\to0$ as $M\to\infty$. Choose $M=M(\beta)$ such that $\beta^2C_*\epsilon_M^{(p-1)/(p+1)}\le\frac12$. As $\|\nabla w\|_2<\infty$ ($h_1$ is Lipschitz), we can absorb and obtain
\[
\|\nabla w\|_2^2\le2\beta^2R^\alpha M^{p-1}\int_\Omega U^{2\beta}.
\]
By the Sobolev inequality, $\big(\int_\Omega h_1(U)^{2^*}\big)^{2/2^*}\le\mathcal S_N^{-1}2\beta^2R^\alpha M^{p-1}\int_\Omega U^{2\beta}$. Letting $L\to\infty$, monotone convergence gives $U\in L^{2^*\beta}(\Omega)$. Starting from $\beta_0=1$ ($U\in L^2$) and iterating with $\beta_{k+1}=\frac{2^*}2\beta_k=\frac N{N-2}\beta_k\to\infty$, we get $U\in L^q(\Omega)$ for every $q<\infty$.

\emph{Step 2: regularity.} By Step 1, $f:=|x|^\alpha U^p\in L^q(\Omega)$ for every $q<\infty$. 
Taking $q>N/2$,  by classical arguments, see \cite{GT},  $U\in L^\infty(\Omega)$ and then $U\in C^{2,\gamma'}_{\rm loc}(\Omega)$.  Since $\Omega$ has Lipschitz continuous boundary by assumption, again by \cite{GT},  then $U\in C^{0,\gamma''}(\overline\Omega)$ for some $\gamma''>0$, with $U=0$ on $\partial\Omega$.
\end{proof}

\begin{proof}[Proof of Theorem~\ref{thm:main}]
Let $u\in\Sigma$ be a minimizer given by Lemma~\ref{lem:min}, and let
\[
U:=\mathsf S^{\frac1{p-1}}u\in\K .
\]
By Proposition~\ref{prop:sol}, for $\psi\in H^1_0(\Omega)$,
\[
\int\nabla U\cdot\nabla\psi=\mathsf S^{\frac1{p-1}}\mathsf S\int|x|^\alpha u^p\psi=\mathsf S^{\frac1{p-1}+1-\frac p{p-1}}\int|x|^\alpha U^p\psi=\int|x|^\alpha U^p\psi .
\]
So $U$ satisfies \eqref{eq:weakU}.  By Proposition~\ref{prop:reg}, $U\in C(\overline\Omega)\cap C^2(\Omega)$.
Moreover $U\ge0$, $U\not\equiv0$ and $-\Delta U\ge0$ in the connected open set $\Omega$, so the strong maximum principle gives $U>0$ in $\Omega$. Hence $U$ solves \eqref{eq:P}.

Property (i) holds because $U$ belongs to $\K\subset H^1_{0,\G}(\Omega)$ and $U$ is continuous. Property (ii) is Lemma~\ref{lem:monot}(ii). Property (iii) holds because $u=\mathsf S^{-1/(p-1)}U$ minimizes the Rayleigh quotient on $\K$: the quotient is $0$-homogeneous and $\K$ is a cone. 
\end{proof}

\begin{remark}[Energy characterization]\label{rem:energy}
The solution $U$ satisfies $\|\nabla U\|_2^2=F(U)=\mathsf S^{\frac{p+1}{p-1}}$. So its energy is
\[
J(U)=\frac12\int_\Omega|\nabla U|^2-\frac1{p+1}\int_\Omega|x|^\alpha U^{p+1}=\Big(\frac12-\frac1{p+1}\Big)\mathsf S^{\frac{p+1}{p-1}} .
\]
This is the least energy among the functions of $\K\setminus\{0\}$ lying on the Nehari manifold $\{\|\nabla v\|_2^2=F(v)\}$. In this sense $U$ is a ground state in $\K$.
\end{remark}

\begin{proof}[Proof of Corollary~\ref{cor:ball}]
Let 
\[\mathcal R(v):=\frac{\int_B|\nabla v|^2}{\left(\int_B|x|^\alpha|v|^{p+1}\right)^{2/(p+1)}}\] 
be the Rayleigh quotient of
Theorem~\ref{thm:main}(iii), and let $u_p$ be the positive radial solution of
\eqref{eq:P} in $B$, which is unique by \cite{NN}. If a minimizer $u$ of $\mathcal R$
on $\K\setminus\{0\}$ were radial, then $u_p$ itself would minimize
$\mathcal R$ on $\K\setminus\{0\}$. We construct
$\varphi$ with $u_p+\eps\varphi\in\K$ and $\mathcal R(u_p+\eps\varphi)<\mathcal R(u_p)$
for $\eps>0$ small, which gives a contradiction. Set
\[
Q(\psi):=\int_B|\nabla\psi|^2-p\int_B|x|^\alpha u_p^{p-1}\psi^2,\qquad \psi\in H^1_0(B).
\]
\smallskip\noindent\emph{Step 1: Construction of a test function $\psi$ such that $Q(\psi)<0$.}
\vskip0.1cm
Let
\[
Y(x):=N\frac{x_N^2}{|x|^2}-1 ,
\]
$Y$ is a spherical harmonic of degree $2$,
with $-\Delta_{S^{N-1}}Y=\lambda_2Y$, where $\lambda_j:=j(N+j-2)$, and
$\int_{S^{N-1}}Y=0$. Following \cite[Definition~3.1]{AG}, consider the singular
eigenvalue problem
\begin{equation}\label{eq:singeig}
-\Delta\phi-p|x|^\alpha u_p^{p-1}\phi=\hat\Lambda\,\frac{\phi}{|x|^2}\quad\text{in }B\setminus\{0\},
\qquad \phi\in H^1_{0,\rm rad}(B),
\end{equation}
and let $\hat\Lambda_1^{\rm rad}$ be its first eigenvalue and
$\hat\phi_1$ a corresponding eigenfunction with $\hat\phi_1\ge0$.
By \cite[(3.3) and Theorem~1.1]{AG}, the Morse index of $u_p$ is the number of
negative values $\hat\Lambda_1^{\rm rad}+\lambda_j$, each counted with the multiplicity
$N_j$ of $\lambda_j$ since $u_p$ has radial Morse index one, see \cite[Proposition~3.2]{AG}. Moreover, for $\alpha>2$ there is $\bar p\in(1,p_\alpha)$ such that
for $p\in[\bar p,p_\alpha)$ this Morse index equals $\sum_{j=0}^kN_j$ with $k\ge2$. In particular
\begin{equation}\label{eq:negdir}
\hat\Lambda_1^{\rm rad}+\lambda_2<0 .
\end{equation}
For a radial $\phi\in H^1_0(B)$, writing $r=|x|$ and using $|\nabla Y|^2=|x|^{-2}|\nabla_{S^{N-1}}Y|^2$
and $\int_{S^{N-1}}|\nabla_{S^{N-1}}Y|^2=\lambda_2\int_{S^{N-1}}Y^2$, we get
\begin{equation}\label{eq:QphiY}
Q(\phi Y)=\Big(\int_{S^{N-1}}Y^2\Big)\int_0^1\Big(\phi'^2+\frac{\lambda_2}{r^2}\phi^2
-p\,r^\alpha u_p^{p-1}\phi^2\Big)r^{N-1}\,dr .
\end{equation}
Testing \eqref{eq:singeig} with $\hat\phi_1$ and inserting the result in
\eqref{eq:QphiY} gives, by \eqref{eq:negdir},
\[
Q(\hat\phi_1Y)=\big(\hat\Lambda_1^{\rm rad}+\lambda_2\big)\int_B\frac{\hat\phi_1^2Y^2}{|x|^2}\,dx<0 .
\]
Since $N\ge3$, non-negative radial functions of $C^\infty_c(B\setminus\{0\})$ are dense
in the non-negative functions of $H^1_{0,\rm rad}(B)$. By the continuity of
$\phi\mapsto Q(\phi Y)$ we find a radial $w\in C^\infty_c(B\setminus\{0\})$ with $w\ge0$
and $Q(wY)<0$.

\smallskip\noindent\emph{Step 2: $u_p+\eps\varphi\in\K$ for $\eps>0$ small with $\varphi:=-wY$.}
\vskip0.1cm
We have that $u_p+\eps\varphi\in H^1_0(B) $ and is
$\mathcal G$-invariant, because $w$ is radial and $Y$  is
$\mathcal G$-invariant. Moreover by construction $u_p\ge c>0$ on $\operatorname{supp}w$. 

Next since $Y$ is non-decreasing in $|x_N|$ for fixed $|x|$, Lemma~\ref{lem:geom} shows that $u_p+\varepsilon\varphi\in\K$ for $\varepsilon>0$ small.

\smallskip\noindent\emph{Step 3: second order expansion.}
\vskip0.1cm
Since $u_p$ is radial and $\int_{S^{N-1}}Y=0$,
\begin{equation}\label{eq:orth}
\int_B\nabla u_p\cdot\nabla\varphi=\int_B|x|^\alpha u_p^p\varphi
=-\int_0^1 r^{\alpha}u_p(r)^p\,w(r)\,r^{N-1}\,dr\int_{S^{N-1}}Y=0 .
\end{equation}
Let $m:=\|\nabla u_p\|_2^2=F(u_p)$. By \eqref{eq:orth},
\[
\|\nabla(u_p+\eps\varphi)\|_2^2=m+\eps^2\|\nabla\varphi\|_2^2 .
\]
Since $u_p\ge c>0$ on the support of $\varphi$, a Taylor expansion together with \eqref{eq:orth} gives
\[
F(u_p+\eps\varphi)=m+\eps^2\,\frac{p(p+1)}2\int_B|x|^\alpha u_p^{p-1}\varphi^2+o(\eps^2).
\]
Hence
\[
\mathcal R(u_p+\eps\varphi)
=m^{\frac{p-1}{p+1}}\Big(1+\frac{\eps^2}m\|\nabla\varphi\|_2^2\Big)
\Big(1-\frac{\eps^2p}m\int_B|x|^\alpha u_p^{p-1}\varphi^2\Big)+o(\eps^2)
=\mathcal R(u_p)\Big(1+\eps^2\,\frac{Q(\varphi)}{F(u_p)}\Big)+o(\eps^2).
\]
Since $Q(\varphi)=Q(wY)<0$, we get $\mathcal R(u_p+\eps\varphi)<\mathcal R(u_p)$ for
$\eps>0$ small, with $u_p+\eps\varphi\in\K\setminus\{0\}$ by Step~2. This contradicts
the minimality of $u_p$.
\end{proof}

%=====================================================================
\section{Examples of admissible domains \texorpdfstring{$\Omega$}{Omega} }\label{sec:examples}
%=====================================================================

In this section we give a necessary condition for admissibility of a domain $\Omega$ in the sense of Definition \ref{def:adm} and a list of examples and non-examples with detailed computations. Recall that, for $x\ne0$, the latitude is $\theta(x)=\arcsin(|x_N|/|x|)\in[0,\pi/2]$.   Several examples are drawn in Figure~\ref{fig:admissible}. Three-dimensional images of some of these domains are shown in Figure~\ref{fig:3d}.

For admissible domains which are not star-shaped with respect to $0$ (Example~\ref{ex:nonstar}) the Pohozaev identity gives no information, and existence for $p\ge p_\alpha$ is open. For such $p$ the embedding of Proposition~\ref{prop:compact} fails, since concentration at the origin is allowed.

\vskip0.5cm
\begin{center}
\begin{minipage}{\textwidth}
\centering

\begin{tikzpicture}[x=1.55cm,y=1.55cm,
  panel/.style={draw=black,line width=.45pt},
  axes/.style={gray!60,densely dotted,line width=.45pt},
  guide/.style={gray!75,densely dotted,line width=.55pt},
  admfill/.style={fill=blue!15},
  admedge/.style={draw=blue!65!black,line width=.85pt,line join=round},
  slice/.style={draw=red,line width=1.6pt,line cap=butt},
  badfill/.style={fill=red!13},
  badedge/.style={draw=red!70!black,line width=.85pt},
  badslice/.style={draw=red!70!black,line width=1.6pt,line cap=butt},
  title/.style={font=\small}
]
% Local panel coordinates: x in [-1.45,1.45], y in [-1.02,1.02].
% Column spacing = 3.25, row spacing = 2.90.
% In every panel: (1) fill, (2) axes and guide circle |x|=r,
% (3) slices Omega cap {|x|=r}, clipped to Omega, (4) boundary on top.

% ------------------------------------------------------------
% 1. Oblate ellipsoid E_{a,b}: x'^2/a^2 + x_N^2/b^2 < 1   (here E_{2,1} scaled)
%    slice endpoint: sin^2(th) = (1/r^2-1/a^2)/(1/b^2-1/a^2)
% ------------------------------------------------------------
\begin{scope}[shift={(0,2.60)}]
  \pgfmathsetmacro{\ea}{1.28}\pgfmathsetmacro{\eb}{0.64}\pgfmathsetmacro{\er}{0.95}
  \pgfmathsetmacro{\eth}{asin(sqrt((1/(\er*\er)-1/(\ea*\ea))/(1/(\eb*\eb)-1/(\ea*\ea))))}
  \def\dom{(0,0) ellipse[x radius=\ea,y radius=\eb]}
  \node[title] at (0,1.18) {oblate ellipsoid of Example~\ref{ex:ellipsoid}};
  \draw[panel] (-1.45,-1.02) rectangle (1.45,1.02);
  \path[admfill] \dom;
  \draw[axes] (-1.45,0)--(1.45,0) (0,-1.02)--(0,1.02);
  \draw[guide] (0,0) circle[radius=\er];
  \begin{scope}
    \clip \dom;
    \draw[slice] (-\eth:\er) arc[start angle=-\eth,end angle=\eth,radius=\er];
    \draw[slice] (180-\eth:\er) arc[start angle=180-\eth,end angle=180+\eth,radius=\er];
  \end{scope}
  \draw[admedge] \dom;
  \fill (0,0) circle[radius=.02];
\end{scope}

% ------------------------------------------------------------
% 2. Lens L_{R,c} = B_R(c e_N) cap B_R(-c e_N)   (here L_{1.3,0.7} scaled by 1.1)
%    half width w = sqrt(R^2-c^2), half height R-c;
%    slice endpoint: sin(th) = (R^2-c^2-r^2)/(2cr)   (need R-c < r < w)
% ------------------------------------------------------------
\begin{scope}[shift={(3.25,2.60)}]
  \pgfmathsetmacro{\lR}{1.43}\pgfmathsetmacro{\lc}{0.77}\pgfmathsetmacro{\lr}{0.92}
  \pgfmathsetmacro{\lw}{sqrt(\lR*\lR-\lc*\lc)}
  \pgfmathsetmacro{\lal}{atan(\lc/\lw)}
  \pgfmathsetmacro{\lth}{asin((\lR*\lR-\lc*\lc-\lr*\lr)/(2*\lc*\lr))}
  % upper arc: circle centred at (0,-c); lower arc: circle centred at (0,c)
  \def\dom{(\lw,0) arc[start angle=\lal,end angle=180-\lal,radius=\lR]
                   arc[start angle=180+\lal,end angle=360-\lal,radius=\lR] -- cycle}
  \node[title] at (0,1.18) {lens of Example~\ref{ex:lens}};
  \draw[panel] (-1.45,-1.02) rectangle (1.45,1.02);
  \path[admfill] \dom;
  \draw[axes] (-1.45,0)--(1.45,0) (0,-1.02)--(0,1.02);
  \draw[guide] (0,0) circle[radius=\lr];
  \begin{scope}
    \clip \dom;
    \draw[slice] (-\lth:\lr) arc[start angle=-\lth,end angle=\lth,radius=\lr];
    \draw[slice] (180-\lth:\lr) arc[start angle=180-\lth,end angle=180+\lth,radius=\lr];
  \end{scope}
  \draw[admedge] \dom;
  \fill (0,0) circle[radius=.02];
\end{scope}

% ------------------------------------------------------------
% 3. Double paraboloid |x_N| < b(1-|x'|^2/a^2), admissible iff b <= a/sqrt(2)
%    slice endpoint: u=s^2 solves (b^2/a^4)u^2 + (1-2b^2/a^2)u + b^2-r^2 = 0,
%    th = atan2(h(s),s)   (need b < r < a)
% ------------------------------------------------------------
\begin{scope}[shift={(6.50,2.60)}]
  \pgfmathsetmacro{\pa}{1.32}\pgfmathsetmacro{\pb}{0.55}\pgfmathsetmacro{\pr}{0.85}
  \pgfmathsetmacro{\pk}{\pb*\pb/(\pa*\pa*\pa*\pa)}
  \pgfmathsetmacro{\pB}{1-2*\pb*\pb/(\pa*\pa)}
  \pgfmathsetmacro{\pu}{(-\pB+sqrt(\pB*\pB-4*\pk*(\pb*\pb-\pr*\pr)))/(2*\pk)}
  \pgfmathsetmacro{\pth}{atan2(\pb*(1-\pu/(\pa*\pa)),sqrt(\pu))}
  \def\dom{plot[domain=-\pa:\pa,samples=121] (\x,{\pb*(1-(\x*\x)/(\pa*\pa))})
           -- plot[domain=\pa:-\pa,samples=121] (\x,{-\pb*(1-(\x*\x)/(\pa*\pa))}) -- cycle}
  \node[title] at (0,1.18) {double paraboloid of Example~\ref{ex:saucer}};
  \draw[panel] (-1.45,-1.02) rectangle (1.45,1.02);
  \path[admfill] \dom;
  \draw[axes] (-1.45,0)--(1.45,0) (0,-1.02)--(0,1.02);
  \draw[guide] (0,0) circle[radius=\pr];
  \begin{scope}
    \clip \dom;
    \draw[slice] (-\pth:\pr) arc[start angle=-\pth,end angle=\pth,radius=\pr];
    \draw[slice] (180-\pth:\pr) arc[start angle=180-\pth,end angle=180+\pth,radius=\pr];
  \end{scope}
  \draw[admedge] \dom;
  \fill (0,0) circle[radius=.02];
\end{scope}

% ------------------------------------------------------------
% 4. Collar: B_b cup {x in B_a : theta(x) < theta_0}
%    slice for b < r < a: exactly {theta < theta_0}
% ------------------------------------------------------------
\begin{scope}[shift={(0,-0.30)}]
  \pgfmathsetmacro{\ca}{1.30}\pgfmathsetmacro{\cb}{0.70}\pgfmathsetmacro{\cr}{1.00}
  \pgfmathsetmacro{\ct}{35}
  \def\dom{(\ct:\cb) -- (\ct:\ca) arc[start angle=\ct,end angle=-\ct,radius=\ca]
           -- (-\ct:\cb) arc[start angle=-\ct,end angle=-180+\ct,radius=\cb]
           -- (180+\ct:\ca) arc[start angle=180+\ct,end angle=180-\ct,radius=\ca]
           -- (180-\ct:\cb) arc[start angle=180-\ct,end angle=\ct,radius=\cb] -- cycle}
  \node[title] at (0,1.18) {collar of Example~\ref{ex:collar}};
  \draw[panel] (-1.45,-1.02) rectangle (1.45,1.02);
  \path[admfill] \dom;
  \draw[axes] (-1.45,0)--(1.45,0) (0,-1.02)--(0,1.02);
  \draw[guide] (0,0) circle[radius=\cr];
  \begin{scope}
    \clip \dom;
    \draw[slice] (-\ct:\cr) arc[start angle=-\ct,end angle=\ct,radius=\cr];
    \draw[slice] (180-\ct:\cr) arc[start angle=180-\ct,end angle=180+\ct,radius=\cr];
  \end{scope}
  \draw[admedge] \dom;
  \fill (0,0) circle[radius=.02];
\end{scope}

% ------------------------------------------------------------
% 5. Non star-shaped: B_c minus T, T = {a <= |x| <= b, theta(x) >= theta_0}
%    slice for a < r < b: exactly {theta < theta_0}
% ------------------------------------------------------------
\begin{scope}[shift={(3.25,-0.30)}]
  \pgfmathsetmacro{\na}{0.50}\pgfmathsetmacro{\nb}{0.78}\pgfmathsetmacro{\nc}{0.97}
  \pgfmathsetmacro{\nr}{0.64}\pgfmathsetmacro{\nt}{50}
  \def\hole#1{(#1+\nt:\na) -- (#1+\nt:\nb) arc[start angle=#1+\nt,end angle=#1+180-\nt,radius=\nb]
              -- (#1+180-\nt:\na) arc[start angle=#1+180-\nt,end angle=#1+\nt,radius=\na] -- cycle}
  \def\dom{(0,0) circle[radius=\nc] \hole{0} \hole{180}}
  \node[title] at (0,1.18) {non star-shaped of Example~\ref{ex:nonstar}};
  \draw[panel] (-1.45,-1.02) rectangle (1.45,1.02);
  \path[admfill,even odd rule] \dom;
  \draw[axes] (-1.45,0)--(1.45,0) (0,-1.02)--(0,1.02);
  \draw[guide] (0,0) circle[radius=\nr];
  \begin{scope}[even odd rule]
    \clip \dom;
    \draw[slice] (-\nt:\nr) arc[start angle=-\nt,end angle=\nt,radius=\nr];
    \draw[slice] (180-\nt:\nr) arc[start angle=180-\nt,end angle=180+\nt,radius=\nr];
  \end{scope}
  \draw[admedge] \dom;
  \fill (0,0) circle[radius=.02];
\end{scope}

% ------------------------------------------------------------
% 6. Cylinder {|x'|<a, |x_N|<l} (not admissible), with a < r < sqrt(a^2+l^2)
%    slice: acos(a/r) < theta < asin(l/r), away from the equator
% ------------------------------------------------------------
% ------------------------------------------------------------
% 6. Cylinder (not admissible)
% ------------------------------------------------------------
\begin{scope}[shift={(6.50,-0.30)}]
  \pgfmathsetmacro{\ya}{0.84}\pgfmathsetmacro{\yl}{0.55}\pgfmathsetmacro{\yr}{0.95}
  \pgfmathsetmacro{\ylo}{acos(\ya/\yr)}\pgfmathsetmacro{\yhi}{asin(\yl/\yr)}
  \def\dom{(-\ya,-\yl) rectangle (\ya,\yl)}
  \node[title] at (0,1.18) {cylinder of Example~\ref{ex:cyl}};
  \draw[panel] (-1.45,-1.02) rectangle (1.45,1.02);
  \path[admfill] \dom;
  \draw[axes] (-1.45,0)--(1.45,0) (0,-1.02)--(0,1.02);
  \draw[guide] (0,0) circle[radius=\yr];
  \begin{scope}
    \clip \dom;
    \foreach \s in {0,180}{
      \draw[slice] (\s+\ylo:\yr) arc[start angle=\s+\ylo,end angle=\s+\yhi,radius=\yr];
      \draw[slice] (\s-\yhi:\yr) arc[start angle=\s-\yhi,end angle=\s-\ylo,radius=\yr];
    }
  \end{scope}
  \draw[admedge] \dom;
  \fill (0,0) circle[radius=.02];
\end{scope}
\end{tikzpicture}
\captionof{figure}{Sections of admissible domains in the plane $\Pi=\mathrm{span}\{e_1,e_N\}$ ($x'$ horizontal, $x_N$ vertical): the oblate ellipsoid of Example~\ref{ex:ellipsoid}, the lens of Example~\ref{ex:lens}, a double paraboloid with $b<a/\sqrt2$ (Example~\ref{ex:saucer}), the collar domain (Example~\ref{ex:collar}) and the non star-shaped domain (Example~\ref{ex:nonstar}). On the dotted circle $|x|=r$, the thick arc is $\Omega\cap\partial B_r$: for admissible domains it is a zone around the equator, as required by condition~(b). For the cylinder (Example~\ref{ex:cyl}) the slice $\Omega\cap\partial B_r$ misses the equator, so condition~(b) fails.}
\label{fig:admissible}
\end{minipage}
\end{center}

\begin{figure}[ht]
\centering
\includegraphics[width=0.8\textwidth]{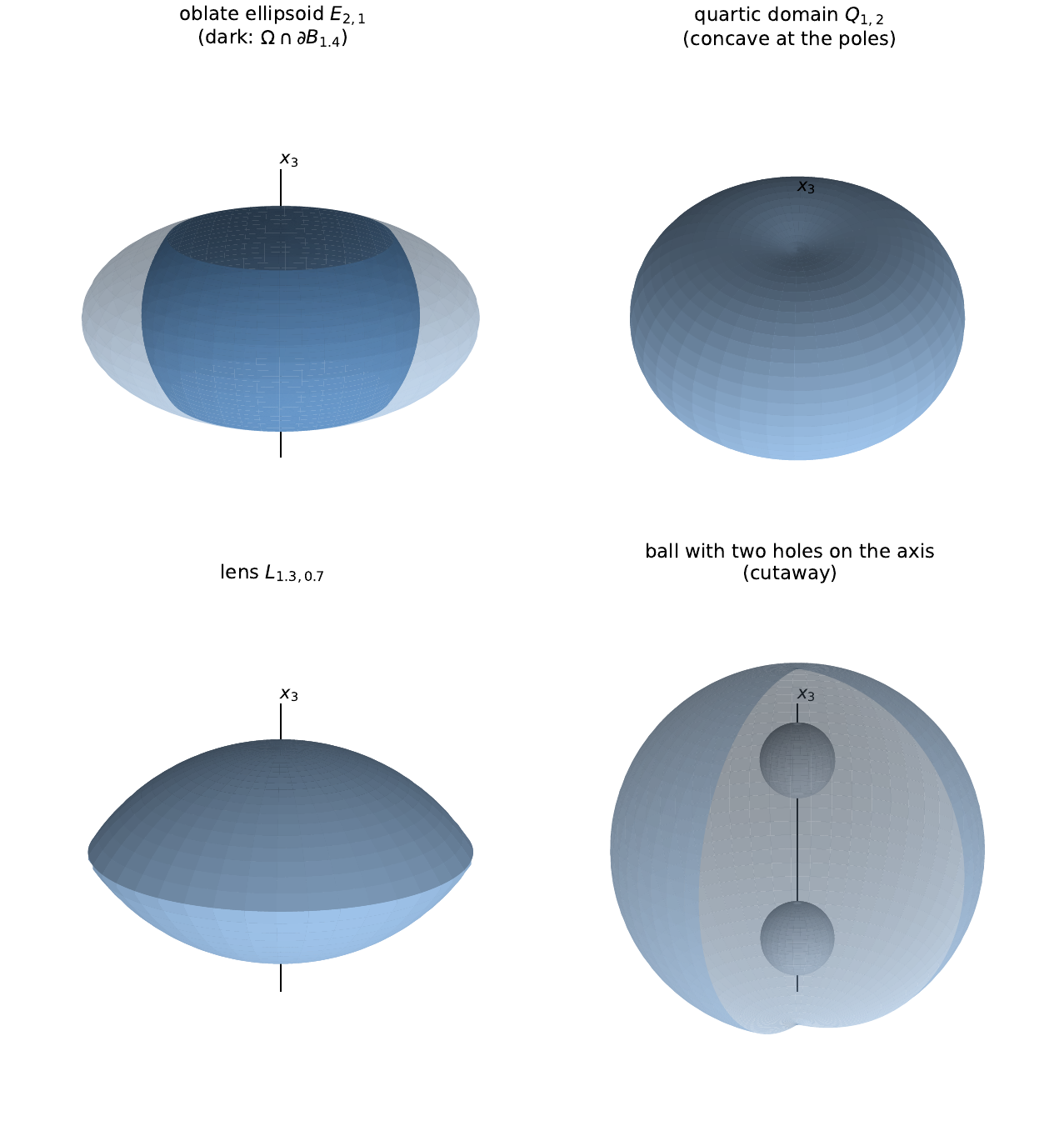}
\caption{Some admissible domains in $\R^3$, which are invariant under rotations around the $x_3$-axis and under $x_3\mapsto-x_3$. Top left: the oblate ellipsoid $E_{2,1}$ of Example~\ref{ex:ellipsoid}; the dark surface is $\Omega\cap\partial B_{1.4}$, a zone around the equator, as required by condition (b). Top right: the quartic domain $Q_{1,2}$ of Example~\ref{ex:quartic}, which is star-shaped but concave at the poles. Bottom left: the lens $L_{1.3,0.7}$ of Example~\ref{ex:lens}, with an edge along the equator. Bottom right: a cutaway view of the ball with two holes on the axis of Example~\ref{ex:holes}.}
\label{fig:3d}
\end{figure}

We start with a necessary condition.

\begin{lemma}\label{lem:disc}
Let $\Omega$ be admissible and $R_\Omega:=\sup_{x\in\Omega}|x|$. Then $\Omega$ contains the equatorial disc
\[
D_\Omega:=\{x:\ x_N=0,\ |x'|<R_\Omega\}.
\]
In particular, the points of $\overline\Omega$ at maximal distance from the origin include the whole equatorial $(N-2)$-sphere $\{x_N=0,\ |x'|=R_\Omega\}$.
\end{lemma}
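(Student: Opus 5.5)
Fix a radius $0<r<R_\Omega$. By definition of $R_\Omega$ there is a point $x\in\Omega$ with $|x|>r$, and we may take $|x|\in(r,R_\Omega)$ arbitrarily close to $R_\Omega$. We then move $x$ along a meridian down to the equator. The point $y$ with $|y|=|x|$ and $y_N=0$ satisfies $|y_N|=0\le|x_N|$, so condition (b) of Definition~\ref{def:adm} gives $y\in\Omega$. Since $\Omega$ is $O(N-1)$-invariant by condition (a), the whole equatorial sphere $\{x_N=0,\ |x'|=|x|\}$ lies in $\Omega$.

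Next we fill the disc radially. This is the step that needs care, because admissibility by itself gives no radial star-shapedness. Condition (b) only moves points along spheres, not inward, so we cannot simply shrink $|y|$. Instead we use connectedness of $\Omega$ together with $0\in\Omega$.

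The plan is to consider the set
\[
I:=\{s\in[0,R_\Omega):\ se_1\in\Omega\}.
\]
This set is open in $[0,R_\Omega)$ and contains $0$. We want to show it contains every $s$ for which some point of $\Omega$ has norm $s$. Given any $s<R_\Omega$, we take a continuous path $\gamma\subset\Omega$ from $0$ to a point of norm $>s$ (for instance the point $x$ above, using that a domain is path connected). By continuity of $|\gamma|$, some point $z\in\gamma$ has $|z|=s$. Applying (b) to $z$ and then $O(N-1)$-invariance yields $se_1\in\Omega$, and in fact the full equatorial sphere of radius $s$ lies in $\Omega$.

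Taking the union over all $s<R_\Omega$ gives $D_\Omega\subset\Omega$. For the last assertion, the points $se'$ with $e'\in S^{N-2}\times\{0\}$ and $s\uparrow R_\Omega$ belong to $\Omega$. Their limits therefore lie in $\overline\Omega$ and have norm $R_\Omega$, which is the maximal distance from the origin attained on $\overline\Omega$. The only real obstacle is the intermediate-value argument along a path, and this is straightforward once we invoke connectedness.
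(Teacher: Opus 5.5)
Your proof is correct and follows essentially the paper's argument: connectedness of $\Omega$ shows that every radius $s<R_\Omega$ is the norm of some point of $\Omega$, and condition (b) then moves that point to $(se',0)$ for every unit vector $e'$. The differences are minor: you get the radii via path-connectedness and the intermediate value theorem, whereas the paper shows that $\{|x|:\ x\in\Omega\}$ is a connected open subset of $[0,\infty)$; your opening paragraph and the set $I$ are unnecessary; and $O(N-1)$-invariance is not needed, since (b) already reaches every equatorial point of the same norm.
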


\begin{proof}
The set $I:=\{|x|:\ x\in\Omega\}$ is connected, since $\Omega$ is connected, and contains $0$. It is open in $[0,\infty)$: if $x\in\Omega\setminus\{0\}$, the points $tx$ with $t$ close to $1$ lie in $\Omega$. Since $\sup I=R_\Omega$, we get $I=[0,R_\Omega)$. Now let $0\le s<R_\Omega$, pick $x\in\Omega$ with $|x|=s$, and let $e'\in\R^{N-1}$ be a unit vector. The point $y=(se',0)$ satisfies $|y|=|x|$ and $|y_N|=0\le|x_N|$, so $y\in\Omega$ by condition (b). Hence $D_\Omega\subset\Omega$.
\end{proof}

\subsection{Radial graphs}

\begin{lemma}\label{lem:polar}
Let $g:[0,\pi/2]\to(0,\infty)$ be lower semicontinuous and non-increasing, and let
\[
\Omega_g:=\{0\}\cup\{x\ne0:\ |x|<g(\theta(x))\}.
\]
Then $\Omega_g$ is a bounded admissible domain, star-shaped with respect to $0$, containing $B_{g(\pi/2)}$ and contained in $B_{g(0)}$. Conversely, if $g$ is lower semicontinuous and $\Omega_g$ is admissible, then $g$ is non-increasing.
\end{lemma}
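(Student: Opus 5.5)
The plan is to verify the properties of $\Omega_g$ one at a time, directly from the definition $\Omega_g=\{0\}\cup\{x\ne0:\ |x|<g(\theta(x))\}$. Each property reduces to one of three facts: $\theta$ is continuous on $\R^N\setminus\{0\}$, $g$ is lower semicontinuous, and $g$ is monotone.

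\emph{Openness.} Let $x_0\ne0$ with $|x_0|<g(\theta(x_0))$. Lower semicontinuity of $g$ and continuity of $\theta$ give that $x\mapsto g(\theta(x))-|x|$ is lower semicontinuous on $\R^N\setminus\{0\}$. Hence it stays positive near $x_0$. For $x_0=0$, set $m:=\inf g=g(\pi/2)$; this value is positive because $g>0$ and $g$ is non-increasing. Then $B_m\subset\Omega_g$, since $|x|<m\le g(\theta(x))$ for every $x\in B_m$. This also proves $B_{g(\pi/2)}\subset\Omega_g$.

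\emph{Boundedness, star-shapedness, connectedness.} Since $g\le g(0)$, we get $\Omega_g\subset B_{g(0)}$. If $x\in\Omega_g$ and $t\in(0,1]$, then $\theta(tx)=\theta(x)$ and $|tx|<|x|<g(\theta(x))$, so $tx\in\Omega_g$. Hence $\Omega_g$ is star-shaped with respect to $0$, and in particular path-connected.

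\emph{Admissibility.} $\theta$ depends only on $|x'|$ and $|x_N|$, so $\Omega_g$ is $\G$-invariant, which is condition (a). For condition (b), take $x\in\Omega_g\setminus\{0\}$ and $y$ with $|y|=|x|$ and $|y_N|\le|x_N|$. Then $\theta(y)\le\theta(x)$, and monotonicity gives $|y|=|x|<g(\theta(x))\le g(\theta(y))$. If $x=0$, then $y=0$. So condition (b) holds.

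\emph{Converse.} Suppose $0\le\theta_1<\theta_2\le\pi/2$, and assume for contradiction that $g(\theta_1)<g(\theta_2)$. Pick $r$ with $g(\theta_1)<r<g(\theta_2)$. The point $x=r(\cos\theta_2 e_1+\sin\theta_2 e_N)$ lies in $\Omega_g$. Condition (b) then forces $y=r(\cos\theta_1e_1+\sin\theta_1e_N)\in\Omega_g$, which means $r<g(\theta_1)$, a contradiction. So $g$ is non-increasing. This direction does not even use lower semicontinuity, beyond the fact that $\Omega_g$ must be a domain for admissibility to make sense.

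The only step needing any care is openness at points off the origin. There lower semicontinuity of $g$ is exactly the right hypothesis: the set $\{x\ne0:\ |x|<g(\theta(x))\}$ is the strict sub-level set of a function that is continuous minus lower semicontinuous. I expect no real obstacle.
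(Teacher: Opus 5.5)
Your proof is correct and follows the paper's argument essentially step by step. Openness comes from the lower semicontinuity of $x\mapsto g(\theta(x))-|x|$ together with $B_{g(\pi/2)}\subset\Omega_g$; star-shapedness uses $\theta(tx)=\theta(x)$; $\G$-invariance and condition (b) follow from $\theta(y)\le\theta(x)$ and monotonicity; and the converse is the same two-point contradiction at a radius $r\in(g(\theta_1),g(\theta_2))$, with only a cosmetic slip ($|tx|\le|x|$, not $<$, when $t=1$).
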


\begin{proof}
The map $x\mapsto\theta(x)$ is continuous on $\R^N\setminus\{0\}$. Hence $x\mapsto g(\theta(x))-|x|$ is lower semicontinuous there, and $\{x\ne0:\ g(\theta(x))-|x|>0\}$ is open. Since $g\ge g(\pi/2)>0$, we have $B_{g(\pi/2)}\subset\Omega_g$, so $\Omega_g$ is open. Clearly $\Omega_g\subset B_{g(0)}$. If $x\in\Omega_g$ and $t\in(0,1)$, then $\theta(tx)=\theta(x)$ and $|tx|<|x|$, so $tx\in\Omega_g$; thus $\Omega_g$ is star-shaped, hence connected.

Since $\theta(x)$ depends only on $|x'|$ and $|x_N|$, $\Omega_g$ is $\G$-invariant. If $x\in\Omega_g\setminus\{0\}$, $|y|=|x|$ and $|y_N|\le|x_N|$, then $\theta(y)\le\theta(x)$, so $g(\theta(y))\ge g(\theta(x))>|x|=|y|$ and $y\in\Omega_g$.

Conversely, suppose $\theta_1<\theta_2$ and $g(\theta_1)<g(\theta_2)$. Pick $r\in(g(\theta_1),g(\theta_2))$, $x=r(\cos\theta_2 e_1+\sin\theta_2 e_N)\in\Omega_g$ and $y=r(\cos\theta_1e_1+\sin\theta_1e_N)$. Then $|y|=|x|$ and $|y_N|<|x_N|$, but $y\notin\Omega_g$ since $|y|=r>g(\theta_1)$. So condition (b) fails.
\end{proof}

If $g$ is continuous, $\partial\Omega_g=\{|x|=g(\theta(x))\}$. If $g$ is Lipschitz, $\Omega_g$ is a Lipschitz domain and, in particular, satisfies the exterior cone condition.  Note that $\min\{g_1,g_2\}$ and $\max\{g_1,g_2\}$ are non-increasing if $g_1,g_2$ are, so intersections and unions of admissible radial-graph domains are admissible.

 Next we describe the examples in Figure~\ref{fig:admissible}.

\begin{example}[Ellipsoids of revolution]\label{ex:ellipsoid}
Let $a,b>0$ and
\[
E_{a,b}:=\Big\{x\in\R^N:\ \frac{|x'|^2}{a^2}+\frac{x_N^2}{b^2}<1\Big\}.
\]
For $x\ne0$ with $|x|=r$ and $\theta(x)=\theta$ we have $|x'|=r\cos\theta$ and $|x_N|=r\sin\theta$, so
\[
x\in E_{a,b}\iff r^2\,c(\theta)<1,\qquad c(\theta):=\frac{\cos^2\theta}{a^2}+\frac{\sin^2\theta}{b^2}.
\]
Thus $E_{a,b}=\Omega_g$ with $g(\theta)=c(\theta)^{-1/2}$, and
\[
c'(\theta)=2\sin\theta\cos\theta\Big(\frac1{b^2}-\frac1{a^2}\Big)=\sin(2\theta)\Big(\frac1{b^2}-\frac1{a^2}\Big),\qquad g'(\theta)=-\frac12c(\theta)^{-3/2}c'(\theta).
\]
\begin{itemize}
\item If $b\le a$ (oblate case, including the ball $a=b$), then $c'\ge0$ on $[0,\pi/2]$, $g$ is non-increasing and $E_{a,b}$ is admissible by Lemma~\ref{lem:polar}.
\item If $b>a$ (prolate case), $g$ is strictly increasing and $E_{a,b}$ is not admissible. This also follows directly from Lemma~\ref{lem:disc}: $R_{E_{a,b}}=b$, but the equatorial points of $E_{a,b}$ satisfy $|x'|<a<b$.
\end{itemize}
In $\R^3$, Theorem~\ref{thm:main} therefore applies to every oblate spheroid $\{(x_1^2+x_2^2)/a^2+x_3^2/b^2<1\}$, $b\le a$. Since these domains are smooth and star-shaped with respect to $0$, the Pohozaev identity shows that the range $1<p<p_\alpha$ is optimal. Ellipsoids with different semi-axes in the $x'$ directions are not $O(N-1)$-invariant and are not covered.
\end{example}

\begin{example}[Lenses and unions of balls]\label{ex:lens}
Let $0<c<R$ and
\[
L_{R,c}:=B_R(ce_N)\cap B_R(-ce_N).
\]
$L_{R,c}$ is $\G$-invariant and $0\in L_{R,c}$. For $x_N\ge0$ we have $|x-ce_N|\le|x+ce_N|$, so for such $x$, $x\in L_{R,c}$ iff $|x+ce_N|<R$. With $|x|=r$ and $\theta(x)=\theta$, this reads
\[
r^2+2cr\sin\theta+c^2<R^2 .
\]
The left-hand side is increasing in $r\ge0$, so this is equivalent to $r<g(\theta)$ with
\[
g(\theta)=-c\sin\theta+\sqrt{R^2-c^2\cos^2\theta},
\]
the positive root of $r^2+2cr\sin\theta+c^2-R^2=0$. As a function of $s=\sin\theta\in[0,1]$, $g=G(s):=-cs+\sqrt{R^2-c^2+c^2s^2}$ and
\[
G'(s)=-c+\frac{c^2s}{\sqrt{R^2-c^2+c^2s^2}}<0,
\]
because $c^2s^2<R^2-c^2+c^2s^2$ (i.e.\ $c<R$).  Hence $g$ is strictly decreasing, from $g(0)=\sqrt{R^2-c^2}$ to $g(\pi/2)=R-c$, and $L_{R,c}=\Omega_g$ is admissible. The boundary of $L_{R,c}$ has an edge along the equatorial sphere $\{x_N=0,\ |x'|=\sqrt{R^2-c^2}\}$. So $L_{R,c}$ is a Lipschitz, non-smooth, convex admissible domain. 
By contrast, the union $B_R(ce_N)\cup B_R(-ce_N)$ corresponds to
\[
g(\theta)=c\sin\theta+\sqrt{R^2-c^2\cos^2\theta},
\]
which is strictly increasing. So the union is not admissible. 
\end{example}

\subsection{Domains bounded by two graphs}

Many axially symmetric domains are naturally described as $\{|x_N|<h(|x'|)\}$. The following proposition translates admissibility stated in Lemma \ref{lem:polar} into a condition on $h$.

\begin{proposition}\label{prop:graph}
Let $a>0$ and $h\in C([0,a])$ with $h>0$ on $[0,a)$ and $h(a)=0$. Assume that
\begin{enumerate}[label=\textup{(H\arabic*)}]
\item $s\mapsto h(s)/s$ is strictly decreasing on $(0,a]$;
\item $s\mapsto m(s):=s^2+h(s)^2$ is non-decreasing on $[0,a]$.
\end{enumerate}
Then
\[
\Omega_h:=\{x\in\R^N:\ |x'|<a,\ |x_N|<h(|x'|)\}
\]
is admissible. Conversely, if \textup{(H1)} holds and $m$ is not non-decreasing, then $\Omega_h$ is not admissible.
\end{proposition}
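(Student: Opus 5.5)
The plan is to reduce everything to Lemma~\ref{lem:polar} by writing $\Omega_h$ as a radial graph $\Omega_g$ with $g$ non-increasing. \textup{(H1)} lets us do this. Set $k(s):=h(s)/s$, which is continuous and strictly decreasing on $(0,a]$, with $k(a)=0$ and $k(s)\to L\in(0,\infty]$ as $s\to0^+$. For $\theta\in[0,\pi/2)$ with $\tan\theta<L$ there is therefore a unique $s(\theta)\in(0,a]$ with $k(s(\theta))=\tan\theta$; it is continuous and non-increasing in $\theta$. If $L<\infty$ and $\tan\theta\ge L$ we set $s(\theta):=0$. In the latter case $h(0)=\lim_{s\to0}sk(s)=0$ is forced, contradicting $h(0)>0$. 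Hence $L=\infty$, and $s(\theta)\in(0,a]$ is defined for all $\theta<\pi/2$, with $s(\theta)\to0$ as $\theta\to\pi/2$.

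Next we identify the ray in direction $\theta$. A point $x=r(\cos\theta\, e'+\sin\theta\, e_N)$, with $\theta<\pi/2$, lies in $\Omega_h$ iff $t:=r\cos\theta<a$ and $\tan\theta<k(t)$. Since $k$ is strictly decreasing, this is equivalent to $t<s(\theta)$. Thus $\Omega_h\cap\{\text{ray}\}=\{r<g(\theta)\}$ with
\[
g(\theta):=\frac{s(\theta)}{\cos\theta}=\sqrt{s(\theta)^2+h(s(\theta))^2}=\sqrt{m(s(\theta))},
\]
where the middle equality holds because $h(s(\theta))=s(\theta)\tan\theta$. For $\theta=\pi/2$ the ray is the axis, which meets $\Omega_h$ in $|x_N|<h(0)$, so we set $g(\pi/2):=h(0)=\sqrt{m(0)}$. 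Then $g$ is positive and continuous, since $s(\theta)\to0$ and $m$ is continuous, and $\Omega_h=\Omega_g$ (the origin included).

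Since $\theta\mapsto s(\theta)$ is non-increasing and $m$ is non-decreasing by \textup{(H2)}, $g=\sqrt{m\circ s}$ is non-increasing. Lemma~\ref{lem:polar} then gives admissibility; connectedness and boundedness are part of its conclusion.

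For the converse, assume \textup{(H1)} and pick $s_1<s_2$ in $[0,a]$ with $m(s_1)>m(s_2)$. The map $\theta\mapsto s(\theta)$ from $[0,\pi/2]$ onto $[0,a]$ (with $s(\pi/2)=0$) is a continuous decreasing bijection, because $k$ is a strictly decreasing bijection $(0,a]\to[0,\infty)$. So we may write $s_i=s(\theta_i)$ with $\theta_2<\theta_1$. Then $g(\theta_2)=\sqrt{m(s_2)}<\sqrt{m(s_1)}=g(\theta_1)$, so $g$ is not non-increasing. Since $g$ is continuous, hence lower semicontinuous, the converse part of Lemma~\ref{lem:polar} shows that $\Omega_g=\Omega_h$ is not admissible.

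The main obstacle is the bookkeeping near the axis: showing that $k(s)\to\infty$ as $s\to0^+$, that $s(\theta)\to0$, and that the axis point $\theta=\pi/2$ matches $g(\pi/2)=h(0)$, so that $g$ is genuinely defined, positive and continuous on all of $[0,\pi/2]$. The rest is direct substitution.
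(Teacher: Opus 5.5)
Your proposal is correct and follows essentially the same route as the paper. You invert $h(s)/s=\tan\theta$ to get $s(\theta)$, identify $\Omega_h=\Omega_g$ with $g(\theta)=s(\theta)/\cos\theta=\sqrt{m(s(\theta))}$ by the same ray-by-ray case analysis, and transfer monotonicity between $m$ and $g$ through the decreasing bijection $\theta\mapsto s(\theta)$ before applying Lemma~\ref{lem:polar} in both directions. You also check explicitly that $h(s)/s\to\infty$ as $s\to0^+$ because $h(0)>0$, which the paper leaves implicit; this could be said in one line rather than through the detour via the limit $L$.
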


\begin{proof}
By (H1)  for every $\theta\in(0,\pi/2)$ there is  a unique $s(\theta)\in(0,a)$ with
\[
\frac{h(s(\theta))}{s(\theta)}=\tan\theta .
\]
The map $\theta\mapsto s(\theta)$ is continuous and strictly decreasing. We set $s(0):=a$ and $s(\pi/2):=0$, and define
\[
g(\theta):=\sqrt{m(s(\theta))}=\sqrt{s(\theta)^2+h(s(\theta))^2}.
\]
Geometrically, $g(\theta)$ is the distance from $0$ of the point $(s(\theta),h(s(\theta)))$ of the boundary curve seen at latitude $\theta$; note that $g(\theta)=s(\theta)/\cos\theta$ for $\theta<\pi/2$.

\emph{Claim: $\Omega_h=\Omega_g$.} Let $x\ne0$, $r=|x|$, $\theta=\theta(x)$, so that $|x'|=r\cos\theta$ and $|x_N|=r\sin\theta$.
\begin{itemize}
\item If $\theta=0$: $x\in\Omega_h$ iff $|x'|<a$ iff $r<a=g(0)$.
\item If $\theta=\pi/2$: $x\in\Omega_h$ iff $|x_N|<h(0)$ iff $r<g(\pi/2)$.
\item If $0<\theta<\pi/2$ and $|x'|=r\cos\theta\ge a$, then $x\notin\Omega_h$, and also $r\ge a/\cos\theta>s(\theta)/\cos\theta=g(\theta)$.
\item If $0<\theta<\pi/2$ and $|x'|<a$, then $x\in\Omega_h$ iff $\tan\theta<h(|x'|)/|x'|$ iff $|x'|<s(\theta)$ by (H1), iff $r<s(\theta)/\cos\theta=g(\theta)$.
\end{itemize}
So $\Omega_h=\Omega_g$. The function $g$ is continuous, and it is non-increasing if and only if $m$ is non-decreasing, because $\theta\mapsto s(\theta)$ is a decreasing bijection of $[0,\pi/2]$ onto $[0,a]$. Both assertions now follow from Lemma~\ref{lem:polar}. 
\end{proof}

\begin{example}[Ellipsoids, again]
For $h(s)=b\sqrt{1-s^2/a^2}$ we have $h(s)/s=b\sqrt{1/s^2-1/a^2}$, which is strictly decreasing, and
\[
m(s)=b^2+\Big(1-\frac{b^2}{a^2}\Big)s^2 ,
\]
which is non-decreasing iff $b\le a$. This recovers Example~\ref{ex:ellipsoid}.
\end{example}

\begin{example}[Double paraboloids (``flying saucers'')]\label{ex:saucer}
Let $a,b>0$ and $h(s)=b(1-s^2/a^2)$, so that
\[
\Omega_h=\Big\{|x'|<a,\ |x_N|<b\Big(1-\frac{|x'|^2}{a^2}\Big)\Big\}
\]
is bounded by two paraboloids meeting along the equatorial sphere $\{x_N=0,\ |x'|=a\}$ with a corner. (H1) holds since $h(s)/s=b(1/s-s/a^2)$ is strictly decreasing. Moreover
\[
m(s)=s^2+b^2\Big(1-\frac{s^2}{a^2}\Big)^2,\qquad m'(s)=2s\Big[1-\frac{2b^2}{a^2}\Big(1-\frac{s^2}{a^2}\Big)\Big].
\]
The bracket is increasing in $s\in[0,a]$ and equals $1-2b^2/a^2$ at $s=0$. Therefore
\[
\Omega_h\ \text{is admissible}\iff b\le\frac a{\sqrt2}.
\]
If $a/\sqrt2<b\le a$, the domain is flatter than a ball ($b\le a$) but still not admissible: $m'<0$ near $s=0$, i.e.\ near the poles the boundary is more curved than the sphere through the pole. 
\end{example}

\subsection{Non-convex and non star-shaped examples}

\begin{example}[Ball with an equatorial collar]\label{ex:collar}
Let $0<b<a$ and $\theta_0\in(0,\pi/2)$, and let
\[
g(\theta)=\begin{cases}a&0\le\theta<\theta_0,\\ b&\theta_0\le\theta\le\pi/2.\end{cases}
\]
$g$ is non-increasing and lower semicontinuous (it is right-continuous at $\theta_0$). By Lemma~\ref{lem:polar}, the domain
\[
\Omega_g=B_b\cup\big\{x\in B_a:\ |x_N|<\tan\theta_0\,|x'|\big\}
\]
is admissible. It consists of a ball together with a thick collar around the equator. It is star-shaped with respect to $0$, it has a Lipschitz boundary,  but neither convex nor smooth. More generally, every non-increasing lower semicontinuous step function $g$ gives an admissible domain made of concentric spherical sectors of decreasing radius.
\end{example}

\begin{example}[A non star-shaped admissible domain]\label{ex:nonstar}
Let $0<a<b<c$ and $\theta_0\in(0,\pi/2)$, and let
\[
\Omega:=B_c\setminus T,\qquad T:=\{x\ne0:\ a\le|x|\le b,\ \theta(x)\ge\theta_0\}.
\]
Then:
\begin{itemize}
\item $T$ is compact and $\G$-invariant, so $\Omega$ is open, bounded, $\G$-invariant, and $0\in\Omega$.
\item $\Omega$ is connected: $B_a\subset\Omega$, and every point of $\Omega$ can be joined inside $\Omega$ to $B_a$, either radially or through the equatorial zone $\{\theta<\theta_0\}$.
\item $\Omega$ is admissible. Let $x\in\Omega$ and $|y|=|x|$, $|y_N|\le|x_N|$, so that $\theta(y)\le\theta(x)$. If $|x|\notin[a,b]$, then $y\in B_c\setminus T$. If $|x|\in[a,b]$, then $x\notin T$ gives $\theta(x)<\theta_0$, hence $\theta(y)<\theta_0$ and $y\notin T$.
\item $\Omega$ is not star-shaped with respect to $0$.
\end{itemize}
\end{example}

\begin{remark}[Slice description]
An open $\G$-invariant set $\Omega$ satisfies condition (b) iff, for every $r>0$, the set $\{|y_N|/r:\ y\in\Omega\cap\partial B_r\}\subset[0,1]$ is either empty, or an interval $[0,\varrho(r))$ with $0<\varrho(r)\le1$, or $[0,1]$. Indeed it is down-closed by (b) and relatively open by openness and $\G$-invariance of $\Omega$. So $\Omega$ is admissible iff each slice $\Omega\cap\partial B_r$ is empty, the whole sphere, or an open zone $\{|y_N|<r\varrho(r)\}$ symmetric around the equator. No monotonicity of $r\mapsto\varrho(r)$ is required, which explains Example~\ref{ex:nonstar}.
\end{remark}

\subsection{Examples non-admissible}

\begin{example}[Cylinders]\label{ex:cyl}
Let $\mathcal C:=\{|x'|<a,\ |x_N|<\ell\}$ with $a,\ell>0$. Then $R=\sqrt{a^2+\ell^2}>a$, while the equatorial points of $\mathcal C$ satisfy $|x'|<a$. By Lemma~\ref{lem:disc}, $\mathcal C$ is not admissible. In polar form, $\mathcal C=\Omega_g$ with
\[
g(\theta)=\min\Big\{\frac a{\cos\theta},\frac\ell{\sin\theta}\Big\},
\]
which is strictly increasing on $[0,\arctan(\ell/a)]$. More generally, Lemma~\ref{lem:disc} excludes every $\G$-invariant domain whose equatorial section is smaller than the ball $B_{R_\Omega}$. 
\end{example}

\begin{example}[Other non-examples]
Prolate ellipsoids (Example~\ref{ex:ellipsoid}), unions of two balls centred on the axis (Example~\ref{ex:lens}), and double paraboloids with $b>a/\sqrt2$ (Example~\ref{ex:saucer}) are not admissible. The closures of the first two are admissible obstacles in the sense of Definition~\ref{def:polar} (Section~\ref{sec:kelvin}). Balls $B_R(ce_N)$ with $0<|c|<R$ contain $0$ and are axially symmetric, but they are not symmetric with respect to $\{x_N=0\}$, so condition (a) fails.
\end{example}

%=====================================================================
\section{The Kelvin transform: exterior domains and further examples}\label{sec:kelvin}
%=====================================================================
In this section we apply the Kelvin transform with respect to the origin to \eqref{eq:P}. It maps \eqref{eq:P} from  a bounded domain containing $0$ to a H\'enon-type problem in an exterior domain, and positive solutions which are regular at the origin to solutions with fast decay at infinity. 
For a function $u$ defined on $\Omega\setminus\{0\}$, its \emph{Kelvin transform} is
\[
v(y):=|y|^{2-N}u(y/|y|^2),\qquad y\in\Omega^*
\]
where for $A\subset\R^N$ we write $A^*:=\{x/|x|^2:\ x\in A\setminus\{0\}\}$.
We have the following: 

\begin{lemma}\label{lem:kelvin}
Let $\Omega$ be a bounded open set with $0\in\Omega$, $\alpha\in\R$, $p>1$, and let $v$ be the Kelvin transform of $u\in C^2(\Omega\setminus\{0\})$. Then:
\begin{enumerate}[label=\textup{(\roman*)}]
\item if $-\Delta u=|x|^\alpha u^p$ in $\Omega\setminus\{0\}$, then
\begin{equation}\label{eq:Pstar}
-\Delta v=|y|^\beta v^p\quad\text{in }\Omega^*,\qquad \beta:=(N-2)p-(N+2+\alpha) ;
\end{equation}
\item if $u\ge0$, then $\displaystyle\int_{\Omega^*}|y|^\beta v^{p+1}\dd y=\int_\Omega|x|^\alpha u^{p+1}\dd x$.
%\item for $p_\beta:=\frac{N+2+2\beta}{N-2}$, we have $p<p_\alpha\iff\beta<\alpha\iff p>p_\beta$ and $p=p_\alpha\iff p=p_\beta$. {\Ma Quest'ultima riga secondo me si può levare alla luce della dimostrazione del Theorem \ref{thm:ext} che ti ho proposto.}
\end{enumerate}
\end{lemma}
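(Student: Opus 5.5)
For part (i) I will use the classical Kelvin identity: for $u\in C^2(\Omega\setminus\{0\})$ and $v(y)=|y|^{2-N}u(y/|y|^2)$,
\[
\Delta v(y)=|y|^{-N-2}(\Delta u)\big(y/|y|^2\big),\qquad y\in\Omega^*,
\]
where the inversion $x=y/|y|^2$ maps $\Omega^*$ diffeomorphically onto $\Omega\setminus\{0\}$. Two routes are available. One is to quote the identity. The other is to verify it in polar coordinates: write $u=u(r,\omega)$ and note that $r\mapsto 1/r$ together with the factor $r^{2-N}$ intertwines the operator $\partial_r^2+\frac{N-1}{r}\partial_r$ with itself, up to $r^{-4}$, while the angular part $r^{-2}\Delta_{S^{N-1}}$ transforms with the same power. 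I will simply cite it.

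Given the identity, I substitute the equation. Using $|x|=|y|^{-1}$ and $u(x)=|y|^{N-2}v(y)$,
\[
-\Delta v(y)=|y|^{-N-2}|x|^\alpha u(x)^p=|y|^{-N-2-\alpha}\,|y|^{(N-2)p}v(y)^p=|y|^\beta v^p,
\]
with $\beta=(N-2)p-(N+2+\alpha)$, which is exactly \eqref{eq:Pstar}.

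For part (ii) I change variables $x=y/|y|^2$, which is an involution with Jacobian $|\det Dx(y)|=|y|^{-2N}$. Then
\[
\int_\Omega|x|^\alpha u^{p+1}\dd x=\int_{\Omega^*}|y|^{-\alpha}|y|^{(N-2)(p+1)}v(y)^{p+1}|y|^{-2N}\dd y .
\]
The exponent is $-\alpha+(N-2)p+N-2-2N=(N-2)p-N-2-\alpha=\beta$, which gives the claim. The point $\{0\}$ is a null set, so removing it from $\Omega$ does not change the integral. Nonnegativity of $u$ makes the integrands well defined (and possibly infinite), so Tonelli's theorem applies without any integrability assumption.

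The only step that requires care is the Kelvin identity itself, and that is classical.
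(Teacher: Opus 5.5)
Your proof is correct. The paper states this lemma without proof and treats it as the classical Kelvin-transform computation; your argument is exactly that computation, namely the identity $\Delta v(y)=|y|^{-N-2}(\Delta u)(y/|y|^2)$ plus substitution for (i), and the change of variables $x=y/|y|^2$ with Jacobian $|y|^{-2N}$ for (ii), and your exponents check out. One minor quibble in (ii): what you use is the change-of-variables formula for non-negative measurable functions, not Tonelli's theorem.
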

 We can give the following definition:

\begin{definition}\label{def:polar}
A compact set $D\subset\R^N$ is an \emph{admissible obstacle} if
\begin{enumerate}[label=\textup{(\alph*$^*$)}]
\item $0$ is an interior point of $D$ and $\R^N\setminus D$ is connected;
\item $D$ is $\G$-invariant;
\item for every $y\in D$ and every $z\in\R^N$ with $|z|=|y|$ and $|z_N|\ge|y_N|$, we have $z\in D$.
\end{enumerate}
\end{definition}
Condition (c$^*$) is the counterpart of condition (b) of Definition~\ref{def:adm}: moving a point of $D$ along a meridian towards the poles never leaves $D$. 

\begin{lemma}\label{lem:dualdom}
Let $D\subset\R^N$ be compact with $0\in\mathrm{int}\,D$, and let
\[
\Omega_D:=\{0\}\cup(\R^N\setminus D)^*.
\]
Then $\Omega_D$ is a bounded open set containing $0$, and $D$ is an admissible obstacle if and only if $\Omega_D$ is an admissible domain. Conversely, if $\Omega$ is an admissible domain, then $D_\Omega:=\{0\}\cup(\R^N\setminus\Omega)^*$ is an admissible obstacle and $\Omega_{D_\Omega}=\Omega$.
\end{lemma}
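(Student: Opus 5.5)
The idea is to work with the inversion $\iota(x):=x/|x|^2$ on $\R^N\setminus\{0\}$. It is a homeomorphism (indeed an involution) which commutes with every orthogonal map, in particular with $\G$. It preserves latitude, $\theta(\iota x)=\theta(x)$, and it maps spheres $\partial B_r$ onto $\partial B_{1/r}$. Since $D$ is compact, $D\subset B_R$ for some $R$, so $(\R^N\setminus D)^*\subset B_{1/R}\cup\{0\}$ and $\Omega_D$ is bounded. Since $0\in\mathrm{int}\,D$, say $B_\rho\subset D$, we have $\R^N\setminus D\subset\R^N\setminus B_\rho$, hence $(\R^N\setminus D)^*\subset B_{1/\rho}$. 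This is not the inclusion we need, so we argue the other way: $\R^N\setminus D\supset\R^N\setminus B_R$ gives $(\R^N\setminus D)^*\supset B_{1/R}\setminus\{0\}$. Thus $\Omega_D\supset B_{1/R}$, so $0$ is interior. Away from $0$, $\Omega_D$ equals $\iota(\R^N\setminus D)$, which is open because $\R^N\setminus D$ is open and $\iota$ is a homeomorphism. Hence $\Omega_D$ is open and bounded, and $\Omega_D\setminus\{0\}=\iota(\R^N\setminus D)$.

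For the equivalence we check the axioms one by one.

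\emph{Connectedness.} $\Omega_D$ is connected iff $\Omega_D\setminus\{0\}$ is connected. This holds because removing a point from a connected open set in $\R^N$, $N\ge2$, keeps it connected. Conversely, adding $0$ to a set containing a punctured ball keeps it connected. So $\Omega_D$ is connected iff $\R^N\setminus D$ is connected.

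\emph{$\G$-invariance.} This transfers directly, since $\iota$ commutes with $\G$ and $0$ is fixed.

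\emph{Condition (b) versus (c$^*$).} Condition (c$^*$) for $D$ is equivalent to the following statement for the complement: if $y\notin D$, $|z|=|y|$ and $|z_N|\le|y_N|$, then $z\notin D$. Indeed, if $z\in D$, then (c$^*$) applied to $z$ would force $y\in D$; the converse is symmetric. Since $\iota$ preserves $|x|$-level sets (up to $r\mapsto1/r$) and latitude, this statement is exactly condition (b) for $\Omega_D\setminus\{0\}$. At the point $0$ condition (b) is trivial, since only $y=0$ qualifies.

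\emph{Interior of $D$.} The condition $0\in\mathrm{int}\,D$ is assumed throughout. For the converse direction we must instead produce it from the admissibility of $\Omega$; this is done in the next step.

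For the last assertion, let $\Omega$ be admissible and $R:=\sup_{x\in\Omega}|x|$. Then $\R^N\setminus\Omega\supset\R^N\setminus B_R$, so $D_\Omega\supset\overline{B_{1/R}}$ is a neighbourhood of $0$. Since $B_\rho\subset\Omega$ for some $\rho>0$, we get $D_\Omega\subset\overline{B_{1/\rho}}$. Closedness of $D_\Omega$ follows because $\iota(\R^N\setminus\Omega)$ is closed in $\R^N\setminus\{0\}$ and $0\in D_\Omega$. Hence $D_\Omega$ is compact and $0\in\mathrm{int}\,D_\Omega$. Next, $\R^N\setminus D_\Omega=\iota(\Omega\setminus\{0\})$ is connected. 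Finally, $\Omega_{D_\Omega}\setminus\{0\}=\iota(\R^N\setminus D_\Omega)=\iota\iota(\Omega\setminus\{0\})=\Omega\setminus\{0\}$, so $\Omega_{D_\Omega}=\Omega$. Admissibility of $D_\Omega$ as an obstacle then follows from the equivalence already proved.

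The only delicate point I expect is the bookkeeping of $0$ and of openness versus closedness under inversion near the origin and near infinity. Everything else is a direct transfer through $\iota$.
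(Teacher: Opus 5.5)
Your proof is correct and follows the paper's route: the paper only records that $x\mapsto x/|x|^2$ maps $\partial B_r$ onto $\partial B_{1/r}$ and preserves latitude. That is exactly your transfer of (c$^*$), rewritten as the contrapositive condition on $\R^N\setminus D$, to condition (b), and you also supply the openness, connectedness, $\G$-invariance and involution bookkeeping that the paper leaves implicit.

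One slip to fix in your first paragraph. $D\subset B_R$ gives $(\R^N\setminus D)^*\supset\overline{B_{1/R}}\setminus\{0\}$, not $\subset$, so compactness is what makes $0$ an interior point of $\Omega_D$. Boundedness instead comes from $B_\rho\subset D$, i.e.\ from the inclusion $(\R^N\setminus D)^*\subset B_{1/\rho}$ that you wrote down and then dismissed as ``not the inclusion we need''. Your later treatment of $D_\Omega$ assigns these two roles correctly.
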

Condition (b) for $\Omega_D$ and condition (c$^*$) for $D$ are equivalent because the inversion $x\mapsto x/|x|^2$ maps $\partial B_r$ onto $\partial B_{1/r}$ and preserves the latitude.

We can now prove Theorem~\ref{thm:ext} and Corollary \ref{cor:ext}.
\begin{proof}[Proof of Theorem \ref{thm:ext}.]
Let $p$ be as in the statement and set $\alpha:=(N-2)p-(N+2+\beta)$. Then $\alpha>0$ because $p>\frac{N+2+\beta}{N-2}$. Moreover $p<p_\alpha$ is equivalent to $(N-2)p<N+2+2\alpha=2(N-2)p-(N+2)-2\beta$, i.e.\ to $p>p_\beta$. By Lemma~\ref{lem:dualdom}, $\Omega_D$ is admissible, and $\partial\Omega_D=(\partial D)^*$ is Lipschitz, since the inversion is a diffeomorphism near the compact set $\partial D\not\ni0$. Hence by Theorem~\ref{thm:main} there exists a solution $u\in C(\overline{\Omega_D})$ of \eqref{eq:P} in $\Omega_D$. Let $v$ be its Kelvin transform, defined on $(\Omega_D\setminus\{0\})^*=\R^N\setminus D$ which satisfies $v>0$ in $\R^N\setminus D$ and $v=0$ on $\partial D$ because $u=0$ on $\partial\Omega_D=\partial D^*$.
Then (i) follows from $|y|^{N-2}v(y)=u(y/|y|^2)\to u(0)>0$; (ii) from Lemma~\ref{lem:kelvin}(ii) and, for the gradient, from $|\nabla v(y)|\le C|y|^{1-N}$ for $|y|$ large, since $u\in C^1$ near $0$; (iii) from Theorem~\ref{thm:main}, since $y\mapsto y/|y|^2$ commutes with $\G$, maps $\partial B_r$ onto $\partial B_{1/r}$ and preserves the latitude.
\end{proof}
\begin{proof}[Proof of Corollary \ref{cor:ext}.]
 Take $\beta=0$ in Theorem~\ref{thm:ext}: then $p_0=\frac{N+2}{N-2}$.  Both conditions on $p$ reduce to $p>\frac{N+2}{N-2}$ and the claim follows.
\end{proof}

 We give here some examples of admissible obstacles and the transformed domains.

\begin{example}[Lenses and pairs of balls]\label{ex:lensdual}
Let $0<c<R$ and let $L_{R,c}=B_R(ce_N)\cap B_R(-ce_N)$ be the lens of Example~\ref{ex:lens}. Then
\[
\{0\}\cup(\R^N\setminus L_{R,c})^*=\overline{B_\rho(de_N)}\cup\overline{B_\rho(-de_N)},\qquad d=\frac{c}{R^2-c^2},\quad \rho=\frac{R}{R^2-c^2}.
\]
Since $(R,c)\mapsto(\rho,d)$ is a bijection of $\{0<c<R\}$ onto $\{0<d<\rho\}$, the pairs of overlapping balls $\overline{B_\rho(\pm de_N)}$, $0<d<\rho$, are exactly the admissible obstacles dual to lenses (Figure~\ref{fig:duality}).
\end{example}
\begin{example}[Unions of balls centred on the axis]\label{ex:beads2}
Let $A\subset[0,\infty)\times(0,\infty)$ be compact and $D_A:=\bigcup_{(t,\rho)\in A}\big(\overline{B_\rho(te_N)}\cup\overline{B_\rho(-te_N)}\big)$. Then $D_A$ is compact and $\G$-invariant. Since $\min_\pm|y\mp te_N|^2=|y|^2-2t|y_N|+t^2$ is non-increasing in $|y_N|$ for fixed $|y|$, $D_A$ satisfies (c$^*$). Moreover $\R^N\setminus D_A$ is open and connected: for $y\notin D_A$ with $y'\neq0$, the path $\lambda\mapsto((1+\lambda)y',y_N)$, $\lambda\ge0$, goes to infinity without meeting $D_A$. Hence $D_A$ is an admissible obstacle if and only if $0\in\mathrm{int}\,D_A$, and Corollary~\ref{cor:ext} applies. This includes \emph{capsules}, $A=[0,\ell]\times\{\rho\}$, and \emph{strings of beads}, $A=\{(0,\rho_0),(t_1,\rho_1),\dots,(t_k,\rho_k)\}$: a central ball with $k$ symmetric pairs of balls on the axis, overlapping or not (Figure~\ref{fig:duality}, bottom).
\end{example}
\begin{example}[Quartic domains and obstacles]\label{ex:quartic}
With $c(\theta)$ as in Example~\ref{ex:ellipsoid}, $E_{a,b}=\{|y|<c(\theta)^{-1/2}\}$, and its dual is the quartic set
\[
Q_{a,b}:=\{0\}\cup\Big\{x:\ |x|^4<\frac{|x'|^2}{a^2}+\frac{x_N^2}{b^2}\Big\}.
\]
If $b\ge a$, $\overline{E_{a,b}}$ is an admissible obstacle and $Q_{a,b}$ an admissible domain; if $b\le a$, $E_{a,b}$ is admissible and $\overline{Q_{a,b}}$ is an admissible obstacle.
\end{example}
\definecolor{admF}{HTML}{BCD3EE}% admissible domain: fill
\definecolor{admE}{HTML}{2B6CB0}% admissible domain: boundary and arcs
\definecolor{obsF}{HTML}{F6CBA6}% obstacle: fill
\definecolor{obsE}{HTML}{C0562A}% obstacle: boundary and arcs
\definecolor{extF}{HTML}{EEF3FA}% exterior domain R^N \ D: shading

\begin{center}\begin{minipage}{\textwidth}\centering
\begin{tikzpicture}[
  x=0.1155\textwidth,
  y=0.1155\textwidth,
  panel/.style={draw=black,line width=.6pt},
  axes/.style={gray!70,dashed,line width=.35pt,dash pattern=on 2pt off 1.5pt},
  guide/.style={gray!80!black,densely dotted,line width=.45pt},
  admedge/.style={draw=admE,line width=.8pt},
  obsedge/.style={draw=obsE,line width=.8pt},
  admarc/.style={draw=admE,line width=1.7pt,line cap=butt},
  obsarc/.style={draw=obsE,line width=1.7pt,line cap=butt},
  title/.style={font=\footnotesize,anchor=base}
]

% Panels are squares [-1,1]^2; columns at x = 0, 2.2, 4.4, 6.6;
% rows at y = 0, -2.45.

\def\frameA{\draw[panel] (-1,-1) rectangle (1,1);}
\def\frameO{\fill[extF] (-1,-1) rectangle (1,1);}
\def\axes{\draw[axes] (-1,0)--(1,0) (0,-1)--(0,1);}
\def\origin{\fill (0,0) circle[radius=.022];}

% =================================================================
% 1--2. LENS AND ITS DUAL
% =================================================================

\pgfmathsetmacro{\lR}{1.3}
\pgfmathsetmacro{\lc}{0.7}
\pgfmathsetmacro{\lr}{0.8}

\pgfmathsetmacro{\lw}{sqrt(\lR*\lR-\lc*\lc)}
\pgfmathsetmacro{\lal}{atan(\lc/\lw)}
\pgfmathsetmacro{\lth}{
  asin((\lR*\lR-\lc*\lc-\lr*\lr)/(2*\lc*\lr))
}
\pgfmathsetmacro{\ld}{\lc/(\lR*\lR-\lc*\lc)}
\pgfmathsetmacro{\lrho}{\lR/(\lR*\lR-\lc*\lc)}

% 1. lens ----------------------------------------------------------

\begin{scope}[shift={(0,0)}]

  \node[title] at (0,1.12)
    {$\Om_D=$ lens (Ex.~\ref{ex:lensdual})};

  \axes

  \begin{scope}[scale=.80]

    \def\dom{
      (\lw,0)
      arc[start angle=\lal,end angle=180-\lal,radius=\lR]
      arc[start angle=180+\lal,end angle=360-\lal,radius=\lR]
      -- cycle
    }

    \fill[admF] \dom;

    \draw[guide] (0,0) circle[radius=\lr];

    \begin{scope}
      \clip \dom;

      \draw[admarc]
        (-\lth:\lr)
        arc[start angle=-\lth,end angle=\lth,radius=\lr];

      \draw[admarc]
        (180-\lth:\lr)
        arc[start angle=180-\lth,end angle=180+\lth,radius=\lr];

    \end{scope}

    \draw[admedge] \dom;

  \end{scope}

  \origin
  \frameA

\end{scope}

% 2. dual of the lens: two balls ----------------------------------

\begin{scope}[shift={(2.2,0)}]

  \node[title] at (0,1.12)
    {$D=$ two balls};

  \frameO
  \axes

  \begin{scope}[scale=.55]

    \def\dom{
      (0,\ld) circle[radius=\lrho]
      (0,-\ld) circle[radius=\lrho]
    }

    \draw[obsE,line width=1.6pt] \dom;
    \fill[obsF] \dom;

    \pgfmathsetmacro{\gr}{1/\lr}

    \draw[guide] (0,0) circle[radius=\gr];

    \begin{scope}
      \clip \dom;

      \draw[obsarc]
        (\lth:\gr)
        arc[start angle=\lth,end angle=180-\lth,radius=\gr];

      \draw[obsarc]
        (180+\lth:\gr)
        arc[start angle=180+\lth,end angle=360-\lth,radius=\gr];

    \end{scope}

  \end{scope}

  \origin
  \frameA

\end{scope}

% =================================================================
% 3--4. STRING OF BEADS AND ITS DUAL
% moved from the old positions 7--8
% =================================================================

\def\beads{0.7/0.42,1.2/0.32,1.6/0.22}

\pgfmathsetmacro{\bz}{0.55}
\pgfmathsetmacro{\bt}{1.25}

\pgfmathsetmacro{\bth}{
  asin((\bt*\bt+1.2*1.2-0.32*0.32)/(2*\bt*1.2))
}

% 3. dual of a string of beads ------------------------------------

\begin{scope}[shift={(4.4,0)}]

  \node[title] at (0,1.12)
    {$\Omega_{D_A}$ (Ex. \ref{ex:beads2})};

  % overlay: the large removed balls must not enlarge the bounding box
  \begin{scope}[scale=.50,overlay]

    \pgfmathsetmacro{\BR}{1/\bz}
    \pgfmathsetmacro{\gr}{1/\bt}

    \fill[admF] (0,0) circle[radius=\BR];
    \draw[admedge] (0,0) circle[radius=\BR];

    % removed balls: boundary inside the big ball
    \begin{scope}
      \clip (0,0) circle[radius=\BR];

      \foreach \tj/\rj in \beads {

        \pgfmathsetmacro{\mj}{
          \tj/(\tj*\tj-\rj*\rj)
        }

        \pgfmathsetmacro{\sj}{
          \rj/(\tj*\tj-\rj*\rj)
        }

        \draw[admE,line width=1.6pt]
          (0,\mj) circle[radius=\sj]
          (0,-\mj) circle[radius=\sj];
      }

    \end{scope}

    % white holes, clipped to the panel
    \clip (-2,-2) rectangle (2,2);

    \foreach \tj/\rj in \beads {

      \pgfmathsetmacro{\mj}{
        \tj/(\tj*\tj-\rj*\rj)
      }

      \pgfmathsetmacro{\sj}{
        \rj/(\tj*\tj-\rj*\rj)
      }

      \fill[white]
        (0,\mj) circle[radius=\sj]
        (0,-\mj) circle[radius=\sj];

    }

  \end{scope}

  \axes

  \begin{scope}[scale=.50]

    \pgfmathsetmacro{\gr}{1/\bt}

    \draw[guide] (0,0) circle[radius=\gr];

    \draw[admarc]
      (-\bth:\gr)
      arc[start angle=-\bth,end angle=\bth,radius=\gr];

    \draw[admarc]
      (180-\bth:\gr)
      arc[start angle=180-\bth,end angle=180+\bth,radius=\gr];

  \end{scope}

  \origin
  \frameA

\end{scope}

% 4. string of beads D_A -------------------------------------------

\begin{scope}[shift={(6.6,0)}]

  \node[title] at (0,1.12)
    { $D_A=$ string of beads};

  \frameO
  \axes

  \begin{scope}[scale=.50]

    \def\dom{
      (0,0) circle[radius=\bz]
      (0,0.7) circle[radius=0.42]
      (0,-0.7) circle[radius=0.42]
      (0,1.2) circle[radius=0.32]
      (0,-1.2) circle[radius=0.32]
      (0,1.6) circle[radius=0.22]
      (0,-1.6) circle[radius=0.22]
    }

    \draw[obsE,line width=1.6pt] \dom;
    \fill[obsF] \dom;

    \draw[guide] (0,0) circle[radius=\bt];

    \begin{scope}
      \clip \dom;

      \draw[obsarc]
        (\bth:\bt)
        arc[start angle=\bth,end angle=180-\bth,radius=\bt];

      \draw[obsarc]
        (180+\bth:\bt)
        arc[start angle=180+\bth,end angle=360-\bth,radius=\bt];

    \end{scope}

  \end{scope}

  \origin
  \frameA

\end{scope}

% =================================================================
% 5--6. QUARTIC DOMAIN AND PROLATE ELLIPSOID
% =================================================================

\pgfmathsetmacro{\qa}{1}
\pgfmathsetmacro{\qb}{2}
\pgfmathsetmacro{\qr}{0.8}

\pgfmathsetmacro{\qth}{
  asin(
    sqrt(
      (1/(\qa*\qa)-\qr*\qr)/
      (1/(\qa*\qa)-1/(\qb*\qb))
    )
  )
}

% 5. quartic Q_{1,2} -----------------------------------------------

\begin{scope}[shift={(0,-2.45)}]

  \node[title] at (0,1.12)
    {$\Om_D=Q_{1,2}$ (Ex. \ref{ex:quartic})};

  \axes

  \begin{scope}[scale=.90]

    \def\dom{
      plot[
        domain=0:360,
        samples=181,
        smooth
      ]
      (\x:{
        sqrt(
          cos(\x)*cos(\x)/(\qa*\qa)
          +sin(\x)*sin(\x)/(\qb*\qb)
        )
      })
      -- cycle
    }

    \fill[admF] \dom;

    \draw[guide] (0,0) circle[radius=\qr];

    \begin{scope}
      \clip \dom;

      \draw[admarc]
        (-\qth:\qr)
        arc[start angle=-\qth,end angle=\qth,radius=\qr];

      \draw[admarc]
        (180-\qth:\qr)
        arc[start angle=180-\qth,end angle=180+\qth,radius=\qr];

    \end{scope}

    \draw[admedge] \dom;

  \end{scope}

  \origin
  \frameA

\end{scope}

% 6. dual: prolate ellipsoid E_{1,2} -------------------------------

\begin{scope}[shift={(2.2,-2.45)}]

  \node[title] at (0,1.12)
{$D=$ prolate ellipsoid};
  \frameO
  \axes

  \begin{scope}[scale=.46]

    \pgfmathsetmacro{\gr}{1/\qr}

    \def\dom{
      (0,0) ellipse[
        x radius=\qa,
        y radius=\qb
      ]
    }

    \fill[obsF] \dom;

    \draw[guide] (0,0) circle[radius=\gr];

    \begin{scope}
      \clip \dom;

      \draw[obsarc]
        (\qth:\gr)
        arc[start angle=\qth,end angle=180-\qth,radius=\gr];

      \draw[obsarc]
        (180+\qth:\gr)
        arc[start angle=180+\qth,end angle=360-\qth,radius=\gr];

    \end{scope}

    \draw[obsedge] \dom;

  \end{scope}

  \origin
  \frameA

\end{scope}

% =================================================================
% 7--8. BALL WITH HOLES AND ITS DUAL
% moved from the old positions 3--4
% =================================================================

\pgfmathsetmacro{\hR}{1}
\pgfmathsetmacro{\hm}{0.55}
\pgfmathsetmacro{\hs}{0.20}

\pgfmathsetmacro{\hr}{0.55}

\pgfmathsetmacro{\hth}{
  asin(
    (\hr*\hr+\hm*\hm-\hs*\hs)/
    (2*\hr*\hm)
  )
}

\pgfmathsetmacro{\hmm}{
  \hm/(\hm*\hm-\hs*\hs)
}

\pgfmathsetmacro{\hss}{
  \hs/(\hm*\hm-\hs*\hs)
}

% 7. ball with holes on the axis -----------------------------------

\begin{scope}[shift={(4.4,-2.45)}]

  \node[title] at (0,1.12)
    {$\Om_D=B_R$ with holes (Ex. \ref{ex:holes})};

  \begin{scope}[scale=.95]

    \def\holes{
      (0,\hm) circle[radius=\hs]
      (0,-\hm) circle[radius=\hs]
    }

    \fill[admF] (0,0) circle[radius=\hR];
    \draw[admedge] (0,0) circle[radius=\hR];

    \draw[admE,line width=1.6pt] \holes;
    \fill[white] \holes;

  \end{scope}

  \axes

  \begin{scope}[scale=.95]

    \draw[guide] (0,0) circle[radius=\hr];

    \draw[admarc]
      (-\hth:\hr)
      arc[start angle=-\hth,end angle=\hth,radius=\hr];

    \draw[admarc]
      (180-\hth:\hr)
      arc[start angle=180-\hth,end angle=180+\hth,radius=\hr];

  \end{scope}

  \origin
  \frameA

\end{scope}

% 8. dual: three disjoint balls ------------------------------------

\begin{scope}[shift={(6.6,-2.45)}]

  \node[title] at (0,1.12)
    {$D$= three disjoint balls};

  \frameO
  \axes

  \begin{scope}[scale=.32]

    \pgfmathsetmacro{\cR}{1/\hR}
    \pgfmathsetmacro{\gr}{1/\hr}

    \def\dom{
      (0,0) circle[radius=\cR]
      (0,\hmm) circle[radius=\hss]
      (0,-\hmm) circle[radius=\hss]
    }

    \fill[obsF] \dom;
    \draw[obsedge] \dom;

    \draw[guide] (0,0) circle[radius=\gr];

    \begin{scope}
      \clip \dom;

      \draw[obsarc]
        (\hth:\gr)
        arc[start angle=\hth,end angle=180-\hth,radius=\gr];

      \draw[obsarc]
        (180+\hth:\gr)
        arc[start angle=180+\hth,end angle=360-\hth,radius=\gr];

    \end{scope}

  \end{scope}

  \origin
  \frameA

\end{scope}

\end{tikzpicture}

\captionof{figure}[Kelvin duality between admissible domains and admissible obstacles]{
In the figure above, the sets shown in blue are the admissible domains $\Om_D$. Next to them, in the orange figure, we show the corresponding admissible obstacles $D$, while the complements $\R^N\setminus D$ are shaded.}
\label{fig:duality}
\end{minipage}\end{center}

The notion of admissible obstacle allows one to obtain admissible domains with holes.

\begin{example}[Holes]\label{lem:holes}
Let $\Omega$ be admissible and let $H\subset\Omega$ be a compact $\G$-invariant set with $0\notin H$, satisfying \textup{(c$^*$)}, such that $\Omega\setminus H$ is connected. Then $\Omega\setminus H$ is admissible. Indeed, if $x\in\Omega\setminus H$, $|z|=|x|$ and $|z_N|\le|x_N|$, then $z\in\Omega$ by (b); if $z\in H$, then (c$^*$) gives $x\in H$, a contradiction.
\end{example}

\begin{example}[Domains with holes on the axis]\label{ex:holes}
Let $0<s<m$ with $m+s<R$, and
\[
\Omega:=B_R\setminus\Big(\overline{B_s(me_N)}\cup\overline{B_s(-me_N)}\Big).
\]
The set $H=\overline{B_s(me_N)}\cup\overline{B_s(-me_N)}$ satisfies (c$^*$) and $0\notin H$ since $s<m$. The two closed balls are disjoint and contained in $B_R$, so $\Omega$ is connected. By Example~\ref{lem:holes}, $\Omega$ is admissible. It is a smooth domain which is not star-shaped with respect to $0$, and it is not homeomorphic to a ball.  The same holds with finitely many symmetric pairs of holes centred on the axis.
The dual obstacle is
\[
\overline{B_{1/R}}\cup\overline{B_{s'}(m'e_N)}\cup\overline{B_{s'}(-m'e_N)},\qquad m'=\frac m{m^2-s^2},\quad s'=\frac s{m^2-s^2}.
\]
It consists of three disjoint closed balls since $m'-s'=\frac1{m+s}>\frac1R$.  See Figure \ref{fig:duality}.
\end{example}

%=====================================================================
%\section{Further remarks and open problems}\label{sec:final}
%=====================================================================

\subsection*{Declaration on the use of AI tools}
AI tools, specifically Claude (Anthropic),  were used to assist with the
organization and wording of the manuscript, notation checks, and preparation of the figures. The authors are responsible for the mathematical content
and references.

\subsection*{Acknowledgments}
The authors are members of the Gruppo Nazionale per l'Analisi Matematica, la Probabilit\`a e le loro Applicazioni (GNAMPA) of the Istituto Nazionale di Alta Matematica (INdAM), and were partially supported by INdAM--GNAMPA.

\end{document}